\newtheorem{thm}{Theorem}[section]
\newtheorem{prop}[thm]{Proposition}
\newtheorem{cor}[thm]{Corollary}
\newtheorem{defn}[thm]{Definition}
\newtheorem{remark}[thm]{Remark}
\theoremstyle{definition}
\newcommand{\comment}[1]{}
\numberwithin{equation}{section}
\def\lsim{\raisebox{-1ex}{$~\stackrel{\textstyle <}{\sim}~$}}
\theoremstyle{definition}
\begin{document}
\title[Boundedness of some classical linear operators]{Some applications of the dual spaces of Hardy-amalgam spaces}
\author[Z.V.P. Abl\'e]{Zobo Vincent de Paul Abl\'e}
\address{Laboratoire de Math\'ematiques Fondamentales, UFR Math\'ematiques et Informatique, Universit\'e F\'elix Houphou\"et-Boigny Abidjan-Cocody, 22 B.P 582 Abidjan 22. C\^ote d'Ivoire}
\email{{\tt vincentdepaulzobo@yahoo.fr}}
\author[J. Feuto]{Justin Feuto}
\address{Laboratoire de Math\'ematiques Fondamentales, UFR Math\'ematiques et Informatique, Universit\'e F\'elix Houphou\"et-Boigny Abidjan-Cocody, 22 B.P 1194 Abidjan 22. C\^ote d'Ivoire}
\email{{\tt justfeuto@yahoo.fr}}

\subjclass{42B30, 42B35, 46E30, 42B20} 

\keywords{Amalgam spaces, Hardy-Amalgam spaces, Duality, Calder\'on-Zygmund operator, Convolution operator.}

\date{}

\begin{abstract}
In this paper, thanks to the generalizations of the dual spaces of the Hardy-amalgam spaces $\mathcal H^{(q,p)}$ and $\mathcal{H}_{\mathrm{loc}}^{(q,p)}$ for $0<q\leq1$ and $q\leq p<\infty$, obtained in our earlier paper \cite{AbFt3}, we prove that the inclusion of $\mathcal H^{(1,p)}$ in $(L^1,\ell^p)$ for $1\leq p<\infty$ is strict, and more generally, the one of $\mathcal H^{(q,p)}$ in $\mathcal{H}_{\mathrm{loc}}^{(q,p)}$ for $0<q\leq1$ and $q\leq p<\infty$. Moreover, as other applications, we obtain results of boundedness of Calder\'on-Zygmund and convolution operators, generalizing those known in the context of the spaces $\mathcal H^1$ and $BMO(\mathbb{R}^d)$. 
\end{abstract}

\maketitle

\section{Introduction}
Let $\varphi\in\mathcal C^\infty(\mathbb R^d)$ with support in $B(0,1)$ such that $\int_{\mathbb R^d}\varphi(x)dx=1$, where $B(0,1)$ is the unit open ball centered at $0$ and $\mathcal C^\infty(\mathbb R^d)$ denotes the space of infinitely differentiable complex valued functions on $\mathbb R^d$. The Hardy-amalgam spaces $\mathcal H^{(q,p)}$ and $\mathcal{H}_{\mathrm{loc}}^{(q,p)}$ ($0<q,p<\infty$), introduced in \cite{AbFt}, are a generalization of the classical Hardy spaces $\mathcal H^q$ and $\mathcal{H}_{\mathrm{loc}}^q$ in the sense that they are respectively the spaces of tempered distributions $f$ such that the maximal functions
\begin{equation}
\mathcal M_{\varphi}(f):=\sup_{t>0}|f\ast\varphi_t|\ \ \ \text{ and }\ \ \ {\mathcal{M}_{\mathrm{loc}}}_{_{\varphi}}(f):=\sup_{0<t\leq 1}|f\ast\varphi_t|\ \ \ \ \ \label{maximal}
\end{equation}
belong to the Wiener amalgam spaces $(L^q,\ell^p):=(L^q,\ell^p)(\mathbb R^d)$, where $\varphi_t(x)=t^{-d}\varphi(t^{-1}x),\, t>0$ and $x\in\mathbb R^d$. 

We recall that for $0<p,q\leq\infty$, a locally integrable function $f$ belongs to the amalgam space $(L^q,\ell^p)$ if $$\left\|f\right\|_{q,p}:=\left\|\left\{\left\|f\chi_{_{Q_k}}\right\|_{q}\right\}_{k\in\mathbb{Z}^d}\right\|_{\ell^{p}}<\infty,$$ where $Q_k=k+\left[0,1\right)^{d}$ for $k\in\mathbb Z^d$ (see 
\cite{BDD}, \cite{RBHS}, \cite{FSTW}, \cite{FH} and \cite{JSTW} for details). It is advisable to point out that the Wiener amalgam spaces $(L^q,\ell^p)$ are special cases of the Orlicz-slice spaces introduced by Zhang et al. in \cite{ZYYW}, which are themselves special cases of the ball (quasi-)Banach function spaces introduced by Sawano et al. in \cite{SHYY}. Wiener amalgam spaces are also isomorphic to special cases of the mixed-norm spaces defined by Benedek and Panzone \cite{BPR} (see 
\cite{RBHS} and \cite{FSTW} for details). For some properties and a survey of mixed-norm function spaces, the reader can refer to the papers of Hart et al. \cite{HTWX} and, Huang et al. \cite{HYD}.  

As for classical Hardy spaces, not only the Hardy-amalgam spaces can be characterized in terms of grand maximal functions, but also their definition do not depend on the particular function $\varphi$, and the regular function $\varphi$ can be replaced by the Poisson kernel. These spaces admit atomic characterizations with atoms which are exactly those used in classical Hardy spaces, when $0<q\leq1$ and $q\leq p<\infty$ (see \cite{AbFt} and \cite{AbFt2}). Recently in \cite{AbFt3}, we have generalized the characterizations of their dual spaces obtained in \cite{AbFt1} and \cite{AbFt2} for $0<q\leq p\leq 1$ in case $0<q\leq 1$ and $q\leq p<\infty$.
 
We point out that several developments and generalizations of the Hardy spaces theory modeled on the above mentioned generalizations of Wiener amalgam spaces, were obtained by many authors. On the model of ball quasi-Banach function spaces, we can quote the Hardy spaces for ball quasi-Banach spaces introduced by Sawano et al. in \cite{SHYY}, and the Orlicz-slice Hardy spaces of Zhang et al. in \cite{ZYYW}. Also, we have the works of Wang et al. in 
\cite{WYYg} and \cite{WYYZg}, those of Yan et al. in \cite{YYYn}, of Zhang et al. in \cite{ZWYY}, and to complete, the paper of Chang et al. \cite{CWYZg}. On the other hand, on the model of mixed-norm function spaces, we can mention the anisotropic mixed-norm Hardy spaces defined by Cleanthous et al. in \cite{CGNM} and the works of Huang et al. 
\cite{HLYY1}, \cite{HLYY2} and \cite{HLYY3} on these spaces. Several of our results in 
\cite{AbFt}, \cite{AbFt1} and \cite{AbFt2} have been generalized in the context of these general spaces, namely atomic and molecular decompositions, boundedness of Calder\'on-Zygmund operators, convolution and pseudo-differential operators, and many others. Furthermore, characterizations of the dual spaces of these generalized Hardy spaces have been established, covering those of Hardy-amalgam spaces obtained in \cite{AbFt1} and \cite{AbFt2} when $0<q\leq p\leq 1$. However, the characterization of the dual spaces of the Hardy-amalgam spaces $\mathcal H^{(q,p)}$ and $\mathcal{H}_{\mathrm{loc}}^{(q,p)}$ for the exponent range $0<q\leq1<p<\infty$, does not fall within the scope of what has been done in the context of these generalized Hardy spaces. The characterization of the dual spaces of the Hardy-amalgam spaces $\mathcal H^{(q,p)}$ and $\mathcal{H}_{\mathrm{loc}}^{(q,p)}$ for $0<q\leq1<p<\infty$ was so left an open problem until our earlier paper \cite{AbFt3} where an answer was brought. 

The aim of this paper is to two kinds. First, to answer to two questions raised in \cite{AbFt3}; namely whether the inclusion of $\mathcal H^{(1,p)}$ in $(L^1,\ell^p)$ for $1\leq p<\infty$ and the one of $\mathcal H^{(q,p)}$ in $\mathcal{H}_{\mathrm{loc}}^{(q,p)}$ for $0<q\leq1$ and $q\leq p<\infty$, are strict. Next, to generalize some boundedness results of Calder\'on-Zygmund and convolution operators known in the context of the spaces $\mathcal H^1$ and $BMO(\mathbb{R}^d)$ to the case of the Hardy-amalgam spaces $\mathcal H^{(q,p)}$ and their dual spaces when $0<q\leq1$ and $q\leq p<\infty$. To this end, we organize this paper as follows.  

In Section 2, we recall some properties of the Hardy-amalgam spaces $\mathcal H^{(q,p)}$ and $\mathcal{H}_{\mathrm{loc}}^{(q,p)}$, and their dual spaces. Section 3 is devoted to the study of the inclusion of $\mathcal H^{(1,p)}$ in $(L^1,\ell^p)$ for $1\leq p<\infty$ and more generally, the one of $\mathcal H^{(q,p)}$ in $\mathcal{H}_{\mathrm{loc}}^{(q,p)}$ for $0<q\leq1$ and $q\leq p<\infty$. In the last section, we study the boundedness of Calder\'on-Zygmund and convolution operators on the dual spaces of the Hardy-amalgam spaces $\mathcal H^{(q,p)}$ when $0<q\leq1$ and $q\leq p<\infty$.

Throughout the paper, we always let $\mathbb{N}=\left\{1,2,\ldots\right\}$ and $\mathbb{Z}_{+}=\mathbb{N}\cup\left\{0\right\}$. We use $\mathcal S := \mathcal S(\mathbb R^{d})$ to denote the Schwartz class of rapidly decreasing smooth functions equipped with the topology defined by the family of norms $\left\{\mathcal{N}_{m}\right\}_{m\in\mathbb{Z}_{+}}$, where for all $m\in\mathbb{Z}_{+}$ and $\psi\in\mathcal{S}$, $$\mathcal{N}_{m}(\psi):=\underset{x\in\mathbb R^{d}}\sup(1 + |x|)^{m}\underset{|\beta|\leq m}\sum|{\partial}^\beta \psi(x)|,$$ with $|\beta|=\beta_1+\ldots+\beta_d$, ${\partial}^\beta=\left(\partial/{\partial x_1}\right)^{\beta_1}\ldots\left(\partial/{\partial x_d}\right)^{\beta_d}$ for all $\beta=(\beta_1,\ldots,\beta_d)\in\mathbb{Z}_{+}^d$ and $|x|:=(x_1^2+\ldots+x_d^2)^{1/2}$. The dual space of $\mathcal S$ is the space of tempered distributions denoted by $\mathcal S':= \mathcal S'(\mathbb R^{d})$ equipped with the weak-$\ast$ topology. If $f\in\mathcal{S'}$ and $\theta\in\mathcal{S}$, we denote the evaluation of $f$ on $\theta$ by $\left\langle f,\theta\right\rangle$. The letter $C$ will be used for non-negative constants independent of the relevant variables that may change from one occurrence to another. When a constant depends on some important parameters $\alpha,\gamma,\ldots$, we denote it by $C(\alpha,\gamma,\ldots)$. Constants with subscript, such as $C_{\alpha,\gamma,\ldots}$, do not change in different occurrences but depend on the parameters mentioned in them. We propose the following abbreviation $\mathrm{\bf A}\lsim \mathrm{\bf B}$ for the inequalities $\mathrm{\bf A}\leq C\mathrm{\bf B}$, where $C$ is a positive constant independent of the main parameters. If $\mathrm{\bf A}\lsim \mathrm{\bf B}$ and $\mathrm{\bf B}\lsim \mathrm{\bf A}$, then we write $\mathrm{\bf A}\approx \mathrm{\bf B}$. For any given quasi-normed spaces $\mathcal{A}$ and $\mathcal{B}$ with the corresponding quasi-norms $\left\|\cdot\right\|_{\mathcal{A}}$ and $\left\|\cdot\right\|_{\mathcal{B}}$, the notation $\mathcal{A}\hookrightarrow\mathcal{B}$ means that if $f\in\mathcal{A}$, then $f\in\mathcal{B}$ and $\left\|f\right\|_{\mathcal{B}}\lsim\left\|f\right\|_{\mathcal{A}}$. Also, $\mathcal{A}\cong\mathcal{B}$ means that $\mathcal{A}$ is isomorphic to $\mathcal{B}$, with equivalence of the quasi-norms $\left\|\cdot\right\|_{\mathcal{A}}$ and $\left\|\cdot\right\|_{\mathcal{B}}$.

For a real number $\lambda>0$ and a cube $Q\subset\mathbb R^{d}$ (by a cube we mean a cube whose edges are parallel to the coordinate axes), we write $\lambda Q$ for the cube with same center as $Q$ and side-length $\lambda$ times side-length of $Q$, while $\left\lfloor \lambda \right\rfloor$ stands for the greatest integer less or equal to $\lambda$. Also, for $x\in\mathbb R^{d}$ and $\ell>0$, $Q(x,\ell)$ will denote the cube centered at $x$ and side-length $\ell$. We use the same notations for balls. For a measurable set $E\subset\mathbb R^d$, we denote by $\chi_{_{E}}$ the characteristic function of $E$ and by $\left|E\right|$  its Lebesgue measure. To finish, we denote by $\mathcal{Q}$ the set of all cubes of $\mathbb R^{d}$.

\section{Prerequisites on Hardy-amalgam spaces and their dual spaces}

\subsection{On Hardy-amalgam spaces} 

Let $0<q,p<\infty$. The Hardy-amalgam spaces $\mathcal{H}^{(q,p)}$ and $\mathcal{H}_{\mathrm{loc}}^{(q,p)}$ are Banach spaces whenever $1\leq q,p<\infty$ and quasi-Banach spaces otherwise (see \cite[Proposition 3.8]{AbFt}). Moreover, for $0<q\leq 1$ and $q\leq p<\infty$, they admit atomic characterizations with atoms which are exactly those used in classical Hardy spaces (see \cite[Theorems 4.3, 4.4 and 4.6]{AbFt} and \cite[Theorems 3.2, 3.3 and 3.9]{AbFt2}). 

We recall that for $0<q\leq 1$, $q\leq p<\infty$, $1<r\leq\infty$ and $s\geq\left\lfloor d\left(\frac{1}{q}-1\right)\right\rfloor$ being an integer, a function $\textbf{a}$ is a $(q,r,s)$-atom on $\mathbb{R}^d$ for $\mathcal{H}^{(q,p)}$ if there exists a cube $Q$ such that  
\begin{enumerate}
\item $\text{supp}(\textbf{a})\subset Q$;
\item $\left\|\textbf{a}\right\|_r\leq|Q|^{\frac{1}{r}-\frac{1}{q}}$; \label{defratom1}
\item $\int_{\mathbb{R}^d}x^{\beta}\textbf{a}(x)dx=0$, for all multi-indexes $\beta$ with $|\beta|\leq s$. \label{defratom2}
\end{enumerate}

We define in the same way the atoms of the local Hardy-amalgam spaces, namely the local $(q,r,\delta)$-atoms. But, just like those of the classical local Hardy spaces, for these local $(q,r,\delta)$-atoms, only Condition \ref{defratom2}. is not requiered when the corresponding cubes $Q$ have side length greater than or equal to 1. We denote by $\mathcal{A}(q,r,s)$ the set of all $(\textbf{a},Q)$ such that $\textbf{a}$ is a $(q,r,s)$-atom and $Q$ is the associated cube, and $\mathcal{A}_{\mathrm{loc}}(q,r,s)$ for the local $(q,r,\delta)$-atoms.  

We suppose that $0<q\leq 1$ and $q\leq p<\infty$. Let $1<r\leq\infty$ and $\delta\geq\left\lfloor d\left(\frac{1}{q}-1\right)\right\rfloor$ be an integer. We denote by $\mathcal{H}_{fin}^{(q,p)}$ the subspace of $\mathcal{H}^{(q,p)}$ consisting of finite linear combinations of $(q,r,\delta)$-atoms, and by $\mathcal{H}_{\mathrm{loc},fin}^{(q,p)}$ the subspace of $\mathcal{H}_{\mathrm{loc}}^{(q,p)}$ consisting of finite linear combinations of local $(q,r,\delta)$-atoms. The spaces $\mathcal{H}_{fin}^{(q,p)}$ and $\mathcal{H}_{\mathrm{loc},fin}^{(q,p)}$ are respectively dense subspaces of $\mathcal{H}^{(q,p)}$ and $\mathcal{H}_{\mathrm{loc}}^{(q,p)}$ (see  \cite[Remark 4.7]{AbFt} and \cite[Remark 3.12]{AbFt2}).

\subsection{On the dual space of $\mathcal{H}^{(q,p)}$}

Let's fix an integer $\delta\geq0$. Let $1\leq r<\infty$, $g\in L_{\mathrm{loc}}^r$ and $\Omega\subsetneq\mathbb R^d$  an open subset. We define $$\textit{O}(g,\Omega,r):=\sup{\sum_{n\geq0}|\widetilde{Q^n}|^{\frac{1}{r'}}\left(\int_{\widetilde{Q^n}}\left|g(x)-P_{\widetilde{Q^n}}^{\delta}(g)(x)\right|^{r}dx\right)^{\frac{1}{r}}},$$ where $\frac{1}{r}+\frac{1}{r'}=1$ and the supremum is taken over all families of cubes $\left\{Q^n\right\}_{n\geq0}$ such that $Q^n\subset\Omega$ for all $n\geq0$ and $\sum_{n\geq0}\chi_{_{Q^n}}\leq K(d)$, with $\widetilde{Q^n}=C_0Q^n$, where $K(d)>1$ and $C_0>1$ are fixed constants independent of $\Omega$ and $\left\{Q^n\right\}_{n\geq0}$, and for a cube $Q$, $P_Q^{\delta}(g)$ stands for the unique  polynomial of $\mathcal{P_{\delta}}$ ($\mathcal{P_{\delta}}:=\mathcal{P_{\delta}}(\mathbb{R}^d)$ is the space of polynomial functions of degree at most $\delta$)  such that, for all $\mathfrak{q}\in\mathcal{P_{\delta}}$,
\begin{align*}
\int_{Q}\left[g(x)-P_Q^{\delta}(g)(x)\right]\mathfrak{q}(x)dx=0. 
\end{align*}  
We consider the functions $\phi_{1},\ \phi_{2}\ \text{and}\ \phi_{3}:\mathcal{Q}\rightarrow(0,\infty)$ defined by 
\begin{align}
\phi_{1}(Q)=\frac{\left\|\chi_{_{Q}}\right\|_{q,p}}{|Q|}\ ,\ \phi_{2}(Q)=\frac{\left\|\chi_{_{Q}}\right\|_q}{|Q|}\ \text{ and }\ \phi_{3}(Q)=\frac{\left\|\chi_{_{Q}}\right\|_p}{|Q|}\ , 
\label{Campanatonolocnloc}
\end{align} 
for all $Q\in\mathcal{Q}$, whenever $0<q\leq 1$ and $0<p<\infty$.
\begin{defn}\label{corprecis}
Suppose that $0<q\leq1$ and $0<p<\infty$. Let $0<\eta<\infty$ and $1\leq r<\infty$. We say that a function $g$ in $L_{\mathrm{loc}}^r$ belongs to $\mathcal{L}_{r,\phi_{1},\delta}^{(q,p,\eta)}:=\mathcal{L}_{r,\phi_{1},\delta}^{(q,p,\eta)}(\mathbb{R}^d)$ if there is a constant $C>0$ such that, for all families of open subsets $\left\{\Omega^j\right\}_{j\in\mathbb{Z}}$ with $\left\|\sum_{j\in\mathbb{Z}}2^{j\eta}\chi_{\Omega^j}\right\|_{\frac{q}{\eta},\frac{p}{\eta}}<\infty$, we have 
\begin{align}
\sum_{j\in\mathbb{Z}}2^j\textit{O}(g,\Omega^j,r)\leq C\left\|\sum_{j\in\mathbb{Z}}2^{j\eta}\chi_{\Omega^j}\right\|_{\frac{q}{\eta},\frac{p}{\eta}}^{\frac{1}{\eta}}. \label{dualqp}
\end{align} 
\end{defn}

We have $\mathcal{P_{\delta}}\subset\mathcal{L}_{r,\phi_{1},\delta}^{(q,p,\eta)}$. When $g\in\mathcal{L}_{r,\phi_{1},\delta}^{(q,p,\eta)}$, we put $$\left\|g\right\|_{\mathcal{L}_{r,\phi_{1},\delta}^{(q,p,\eta)}}:=\inf\left\{C>0:\ C \text{ satisfies } (\ref{dualqp})\right\}.$$ Then $\left\|\cdot\right\|_{\mathcal{L}_{r,\phi_{1},\delta}^{(q,p,\eta)}}$ defines a semi-norm on $\mathcal{L}_{r,\phi_{1},\delta}^{(q,p,\eta)}$ and a norm on $\mathcal{L}_{r,\phi_{1},\delta}^{(q,p,\eta)}/\mathcal{P_{\delta}}$. In the sequel, $\mathcal{L}_{r,\phi_{1},\delta}^{(q,p,\eta)}$ will designate $\mathcal{L}_{r,\phi_{1},\delta}^{(q,p,\eta)}/\mathcal{P_{\delta}}$. 
 
\begin{defn}\cite[Definition 6.1]{NEYS} Let $1\leq r\leq\infty$, a function $\phi: \mathcal{Q}\rightarrow (0,\infty)$  and $f\in L_{\mathrm{loc}}^r$. One denotes 
\begin{align*}
\left\|f\right\|_{\mathcal{L}_{r,\phi,\delta}}:=\sup_{Q\in\mathcal{Q}}\frac{1}{\phi(Q)}\left(\frac{1}{|Q|}\int_{Q}\left|f(x)-P_Q^{\delta}(f)(x)\right|^rdx\right)^{\frac{1}{r}}, 
\end{align*}
when $r<\infty$, and
\begin{align*}
\left\|f\right\|_{\mathcal{L}_{r,\phi,\delta}}:=\sup_{Q\in\mathcal{Q}}\frac{1}{\phi(Q)}\left\|f-P_Q^{\delta}(f)\right\|_{L^{\infty}(Q)}, 
\end{align*}
when $r=\infty$. Then, the Campanato space $\mathcal{L}_{r,\phi,\delta}(\mathbb{R}^d)$ is defined to be the set of all $f\in L_{\mathrm{loc}}^r$ such that $\left\|f\right\|_{\mathcal{L}_{r,\phi,\delta}}<\infty$. One considers elements in $\mathcal{L}_{r,\phi,\delta}(\mathbb{R}^d)$ modulo polynomials of degree $\delta$ so that $\mathcal{L}_{r,\phi,\delta}(\mathbb{R}^d)$ is a Banach space. When one writes $f\in\mathcal{L}_{r,\phi,\delta}(\mathbb{R}^d)$, then $f$ stands for the representative of $\left\{f+\mathfrak{q}: \mathfrak{q}\ \text{is a polynomial of degree}\ \delta\right\}$.  
\end{defn}    

The following inequalities were proved in \cite{AbFt3}.\\ 

For $0<q\leq1$, $0<p<\infty$, $0<\eta<\infty$ and $1\leq r<\infty$, we have  
\begin{align}
\left\|g\right\|_{\mathcal{L}_{r,\phi_{1},\delta}}\leq\left\|g\right\|_{\mathcal{L}_{r,\phi_{1},\delta}^{(q,p,\eta)}}, \label{dualqp3}
\end{align}
for all $g\in\mathcal{L}_{r,\phi_{1},\delta}^{(q,p,\eta)}$, and hence  
\begin{align}
\left\|g\right\|_{\mathcal{L}_{r,\phi_{2},\delta}}\leq\left\|g\right\|_{\mathcal{L}_{r,\phi_{1},\delta}}\leq\left\|g\right\|_{\mathcal{L}_{r,\phi_{1},\delta}^{(q,p,\eta)}}, \label{dualqp3bis}
\end{align}
for all $g\in\mathcal{L}_{r,\phi_{1},\delta}^{(q,p,\eta)}$, whenever $0<q\leq1$ and $q\leq p<\infty$. Moreover, whenever $0<q,p\leq1$ and $0<\eta\leq1$, we have 
\begin{align}
\left\|g\right\|_{\mathcal{L}_{r,\phi_{1},\delta}^{(q,p,\eta)}}\approx\left\|g\right\|_{\mathcal{L}_{r,\phi_{1},\delta}}, \label{dualqp3bisbis}
\end{align}
which means that $\mathcal{L}_{r,\phi_{1},\delta}^{(q,p,\eta)}=\mathcal{L}_{r,\phi_{1},\delta}$  with equivalent norms. 
 
Also, for $0<q\leq1$, $0<p<\infty$, $0<\eta<\infty$ and $1\leq r<\infty$, the space $\mathcal{L}_{r,\phi_{1},\delta}^{(q,p,\eta)}$ endowed with the norm $\left\|\cdot\right\|_{\mathcal{L}_{r,\phi_{1},\delta}^{(q,p,\eta)}}$ is complete (see \cite[Proposition 3.5]{AbFt3}).

For $T\in\left(\mathcal{H}^{(q,p)}\right)^{\ast}$ the topological dual space of $\mathcal{H}^{(q,p)}$, we put $$\left\|T\right\|:=\left\|T\right\|_{\left(\mathcal{H}^{(q,p)}\right)^{\ast}}=\sup_{\underset{\left\|f\right\|_{\mathcal{H}^{(q,p)}}\leq 1}{f\in\mathcal{H}^{(q,p)}}}|T(f)|.$$ The hereafter results were proved in \cite{AbFt3}.   

\begin{thm}\cite[Theorem 3.7]{AbFt3}\label{theoremdualqp}
Suppose that $0<q\leq1<p<\infty$ and $\delta\geq\left\lfloor d\left(\frac{1}{q}-1\right)\right\rfloor$. Let $p<r\leq\infty$. Then the topological dual space $\left(\mathcal{H}^{(q,p)}\right)^{\ast}$ of the Hardy-amalgam space $\mathcal H^{(q,p)}$ is isomorphic to $\mathcal{L}_{r',\phi_1,\delta}^{(q,p,\eta)}$  with equivalent norms, where $\frac{1}{r}+\frac{1}{r'}=1$ and, $0<\eta<q$ if $r<\infty$ and $0<\eta\leq1$ if not. More precisely, we have the following assertions:
\begin{enumerate}
\item Let $g\in\mathcal{L}_{r',\phi_1,\delta}^{(q,p,\eta)}$ and $\mathcal{H}_{fin}^{(q,p)}$ be the subspace of $\mathcal{H}^{(q,p)}$ consisting of finite linear combinations of $(q,r,\delta)$-atoms. Then the mapping $$T_g:\mathcal{H}_{fin}^{(q,p)}\ni f\longmapsto\int_{\mathbb{R}^d}g(x)f(x)dx,$$ extends to a unique continuous linear functional $\widetilde{T_g}$ on $\mathcal{H}^{(q,p)}$ such that 
\begin{eqnarray*}
\left\|\widetilde{T_g}\right\|=\left\|T_g\right\|\leq C\left\|g\right\|_{\mathcal{L}_{r',\phi_{1},\delta}^{(q,p,\eta)}}, 
\end{eqnarray*} 
where $C>0$ is a constant independent of $g$. \label{dualpointqp1}
\item Conversely, for every $T\in\left(\mathcal{H}^{(q,p)}\right)^{\ast}$, there exists $g\in\mathcal{L}_{r',\phi_1,\delta}^{(q,p,\eta)}$ such that $T=T_g$; namely $$T(f)=\int_{\mathbb{R}^d}g(x)f(x)dx,\ \text{ for all }\ f\in\mathcal{H}_{fin}^{(q,p)},$$ and
\begin{eqnarray*}
\left\|g\right\|_{\mathcal{L}_{r',\phi_{1},\delta}^{(q,p,\eta)}}\leq C\left\|T\right\|, 
\end{eqnarray*} 
where $C>0$ is a constant independent of $T$. \label{dualpointqp2}
\end{enumerate}
\end{thm}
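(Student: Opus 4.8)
The plan is to establish both assertions on the dense subspace $\mathcal{H}_{fin}^{(q,p)}$ of $\mathcal{H}^{(q,p)}$ and then pass to all of $\mathcal{H}^{(q,p)}$ by density. Granting this, the isomorphism $\left(\mathcal{H}^{(q,p)}\right)^{\ast}\cong\mathcal{L}_{r',\phi_1,\delta}^{(q,p,\eta)}$ will follow by combining assertion~(1) (which makes $g\mapsto\widetilde{T_g}$ a bounded linear map $\mathcal{L}_{r',\phi_1,\delta}^{(q,p,\eta)}\to\left(\mathcal{H}^{(q,p)}\right)^{\ast}$), assertion~(2) (which makes it onto with the reverse norm estimate), and the injectivity of $g\mapsto\widetilde{T_g}$ modulo $\mathcal{P}_{\delta}$ — the latter being immediate, since $\widetilde{T_g}=0$ forces $\int_{\mathbb{R}^d}g(x)h(x)\,dx=0$ for every $h\in L^{r}$ supported in some cube and with vanishing moments up to order $\delta$, whence $g\in\mathcal{P}_{\delta}$. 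Throughout I would lean on two inputs from \cite{AbFt} and \cite{AbFt2}: first, the atomic characterization of $\mathcal{H}^{(q,p)}$ together with the Calder\'on--Zygmund/Whitney decomposition behind it, which for $f\in\mathcal{H}^{(q,p)}$ yields $f=\sum_{j\in\mathbb{Z}}\sum_{k}\lambda_{j,k}a_{j,k}$ (convergence in $\mathcal{S}'$, and finitely if $f\in\mathcal{H}_{fin}^{(q,p)}$) with $a_{j,k}$ a $(q,r,\delta)$-atom on a cube $Q_{j,k}$, $|\lambda_{j,k}|\lesssim 2^{j}|Q_{j,k}|^{1/q}$, the cubes $Q_{j,k}$ of bounded overlap and contained in $\Omega^{j}:=\{x:\mathcal{M}_{\varphi}(f)(x)>2^{j}\}$ for each fixed $j$, and $\|f\|_{\mathcal{H}^{(q,p)}}\approx\|\mathcal{M}_{\varphi}(f)\|_{q,p}\approx\bigl\|\bigl(\sum_{j}2^{jq}\chi_{\Omega^{j}}\bigr)^{1/q}\bigr\|_{q,p}$; second, the homogeneity identity $\|h^{\eta}\|_{\frac{q}{\eta},\frac{p}{\eta}}=\|h\|_{q,p}^{\eta}$ for $h\ge 0$ and the pointwise bound $\bigl(\sum_{j}2^{jq}\chi_{\Omega^{j}}\bigr)^{1/q}\le\bigl(\sum_{j}2^{j\eta}\chi_{\Omega^{j}}\bigr)^{1/\eta}$ valid when $\eta\le q$ (a restatement of $\ell^{\eta}\hookrightarrow\ell^{q}$).

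\emph{Assertion (1): from $g$ to $T_g$.} Fix $g\in\mathcal{L}_{r',\phi_1,\delta}^{(q,p,\eta)}$ and $f\in\mathcal{H}_{fin}^{(q,p)}$, so that $T_g(f)=\int_{\mathbb{R}^d}g(x)f(x)\,dx$ is an absolutely convergent integral. Taking a finite atomic decomposition of $f$ as above (with finite atomic norm comparable to $\|f\|_{\mathcal{H}^{(q,p)}}$ when $r<\infty$), the vanishing moments of the atoms and H\"older's inequality with exponents $r,r'$ give
\begin{align*}
|T_g(f)| &\le \sum_{j,k}|\lambda_{j,k}|\,\Bigl|\int_{Q_{j,k}}\bigl(g-P_{Q_{j,k}}^{\delta}(g)\bigr)\,a_{j,k}\,dx\Bigr| \\
&\lesssim \sum_{j}2^{j}\sum_{k}|Q_{j,k}|^{\frac{1}{r}}\Bigl(\int_{Q_{j,k}}\bigl|g-P_{Q_{j,k}}^{\delta}(g)\bigr|^{r'}\,dx\Bigr)^{\frac{1}{r'}} \;\lesssim\; \sum_{j}2^{j}\,\textit{O}(g,\Omega^{j},r'),
\end{align*}
where the last step is the routine comparison of the Whitney cubes $Q_{j,k}$ with the dilates $\widetilde{Q^{n}}$ entering the definition of $\textit{O}$, using the bounded overlap for fixed $j$. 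Since the sets $\Omega^{j}$ are nested, $\sum_{j}2^{j\eta}\chi_{\Omega^{j}}\approx\bigl(\mathcal{M}_{\varphi}(f)\bigr)^{\eta}$, hence $\bigl\|\sum_{j}2^{j\eta}\chi_{\Omega^{j}}\bigr\|_{\frac{q}{\eta},\frac{p}{\eta}}=\bigl\|(\mathcal{M}_{\varphi}(f))^{\eta}\bigr\|_{\frac{q}{\eta},\frac{p}{\eta}}=\|\mathcal{M}_{\varphi}(f)\|_{q,p}^{\eta}\approx\|f\|_{\mathcal{H}^{(q,p)}}^{\eta}<\infty$, and then Definition~\ref{corprecis} produces $|T_g(f)|\lesssim\|g\|_{\mathcal{L}_{r',\phi_1,\delta}^{(q,p,\eta)}}\|f\|_{\mathcal{H}^{(q,p)}}$. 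By density of $\mathcal{H}_{fin}^{(q,p)}$, $T_g$ extends uniquely to a continuous functional $\widetilde{T_g}$ on $\mathcal{H}^{(q,p)}$ with the stated norm bound.

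\emph{Assertion (2): from $T$ to $g$.} Given $T\in\left(\mathcal{H}^{(q,p)}\right)^{\ast}$ and a cube $Q$, let $L^{r}_{0,\delta}(Q)$ be the closed subspace of $L^{r}(\mathbb{R}^d)$ of functions supported in $Q$ with vanishing moments up to order $\delta$; each such $h$ is, up to normalization, a $(q,r,\delta)$-atom, hence lies in $\mathcal{H}^{(q,p)}$ with $\|h\|_{\mathcal{H}^{(q,p)}}\lesssim\|h\|_{L^{r}}\,|Q|^{\frac1q-\frac1r}$ (up to a harmless $Q$-dependent factor). Thus $T$ restricts to a bounded functional on $L^{r}_{0,\delta}(Q)$, and by Hahn--Banach and $(L^{r})^{\ast}=L^{r'}$ there is $g_Q\in L^{r'}(Q)$, unique modulo $\mathcal{P}_{\delta}$, with $T(h)=\int_{Q}g_Q h\,dx$ on $L^{r}_{0,\delta}(Q)$; choosing the polynomial representatives coherently along an exhaustion of $\mathbb{R}^d$ by cubes patches these into a single $g\in L^{r'}_{\mathrm{loc}}$ with $T(f)=\int_{\mathbb{R}^d}gf\,dx$ for all $f\in\mathcal{H}_{fin}^{(q,p)}$. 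To obtain $g\in\mathcal{L}_{r',\phi_1,\delta}^{(q,p,\eta)}$ with $\|g\|_{\mathcal{L}_{r',\phi_1,\delta}^{(q,p,\eta)}}\lesssim\|T\|$, I would fix a family $\{\Omega^{j}\}$ as in Definition~\ref{corprecis}, for each $j$ a bounded-overlap family $\{Q^{j,n}\}$ of cubes in $\Omega^{j}$ nearly realizing $\textit{O}(g,\Omega^{j},r')$, and on each dilate $\widetilde{Q^{j,n}}$ a function $h^{j,n}\in L^{r}_{0,\delta}(\widetilde{Q^{j,n}})$ with $\|h^{j,n}\|_{L^{r}}\lesssim|\widetilde{Q^{j,n}}|^{\frac1r-\frac1q}$ and $\int g h^{j,n}\,dx\gtrsim|\widetilde{Q^{j,n}}|^{\frac1r-\frac1q}\bigl(\int_{\widetilde{Q^{j,n}}}|g-P_{\widetilde{Q^{j,n}}}^{\delta}(g)|^{r'}\,dx\bigr)^{1/r'}$, obtained from $L^{r'}$--$L^{r}$ duality for $g-P_{\widetilde{Q^{j,n}}}^{\delta}(g)$ after subtracting the polynomial part of the extremizer (legitimate since $g-P_{\widetilde{Q^{j,n}}}^{\delta}(g)$ annihilates $\mathcal{P}_{\delta}$ on $\widetilde{Q^{j,n}}$). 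Then $f:=\sum_{j,n}2^{j}|\widetilde{Q^{j,n}}|^{1/q}h^{j,n}$ satisfies, by the converse half of the atomic characterization and the bounded overlap (as in Assertion~(1)), $\|f\|_{\mathcal{H}^{(q,p)}}\lesssim\bigl\|\bigl(\sum_{j}2^{jq}\chi_{\Omega^{j}}\bigr)^{1/q}\bigr\|_{q,p}\lesssim\bigl\|\sum_{j}2^{j\eta}\chi_{\Omega^{j}}\bigr\|_{\frac{q}{\eta},\frac{p}{\eta}}^{1/\eta}$ (the second inequality is $\ell^{\eta}\hookrightarrow\ell^{q}$, where $\eta<q$ enters), whereas $T(f)=\sum_{j,n}2^{j}|\widetilde{Q^{j,n}}|^{1/q}\int g h^{j,n}\,dx\gtrsim\sum_{j}2^{j}\,\textit{O}(g,\Omega^{j},r')$. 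Combining gives $\sum_{j}2^{j}\,\textit{O}(g,\Omega^{j},r')\lesssim\|T\|\,\bigl\|\sum_{j}2^{j\eta}\chi_{\Omega^{j}}\bigr\|_{\frac{q}{\eta},\frac{p}{\eta}}^{1/\eta}$, i.e. (\ref{dualqp}), and the bound $\|g\|_{\mathcal{L}_{r',\phi_1,\delta}^{(q,p,\eta)}}\lesssim\|T\|$.

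\emph{Main obstacle and the two regimes.} The two delicate points are interface issues rather than hard estimates. The first is giving meaning to $T_g$ on the (a priori infinite) atomic sum in Assertion~(1): for $r<\infty$ this is sidestepped by using genuinely finite atomic decompositions whose finite atomic (quasi-)norm is equivalent to $\|\cdot\|_{\mathcal{H}^{(q,p)}}$, but for $r=\infty$ that equivalence can fail (the Meda--Sj\"ogren--Vallarino / Bownik phenomenon), so the case $r=\infty$ must instead be deduced from a finite exponent $p<r_0<\infty$ via the independence of the Campanato-type spaces $\mathcal{L}_{\bullet,\phi_1,\delta}^{(q,p,\bullet)}$ from the integrability exponent (a John--Nirenberg-type stability) — which is also what permits the weaker restriction $0<\eta\le1$ there in place of $0<\eta<q$. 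The second, and the real technical heart, is the construction of the near-extremal test function $f=\sum_{j,n}2^{j}|\widetilde{Q^{j,n}}|^{1/q}h^{j,n}$ in Assertion~(2) and the bound on its $\mathcal{H}^{(q,p)}$-norm: this is where the amalgam-space bookkeeping must be carried out with care — the bounded overlap of the (dilated) Whitney cubes, the harmless enlargement of the sets $\Omega^{j}$, the homogeneity identity $\|h^{\eta}\|_{\frac{q}{\eta},\frac{p}{\eta}}=\|h\|_{q,p}^{\eta}$, and the embedding $\ell^{\eta}\hookrightarrow\ell^{q}$ — and it is precisely these comparisons that pin down the admissible range of $\eta$. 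The completeness of $\mathcal{L}_{r',\phi_1,\delta}^{(q,p,\eta)}$ recalled above then confirms that both sides of the isomorphism are Banach spaces.
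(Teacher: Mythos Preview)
The present paper does not prove this theorem: it is stated in Section~2 as a prerequisite and attributed to \cite[Theorem~3.7]{AbFt3}, so there is no in-paper argument to compare your proposal against directly. That said, the paper does invoke ``the second part of the proof of Theorem~\ref{theoremdualqp}'' (inside the proof of Theorem~\ref{applicadualqpconv2}) in precisely the way your Assertion~(2) is organized --- namely, from a functional with an integral representation on $\mathcal{H}_{fin}^{(q,p)}$ one extracts the estimate $\sum_{j}2^{j}\textit{O}(g,\Omega^{j},r')\lesssim\|T\|\,\bigl\|\sum_{j}2^{j\eta}\chi_{\Omega^{j}}\bigr\|_{q/\eta,p/\eta}^{1/\eta}$ --- so your outline is consistent with how the cited proof is actually used downstream in this paper.

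Your sketch is the standard duality scheme and is correct in broad strokes. One point deserving more care in Assertion~(1): the Calder\'on--Zygmund atomic decomposition of an $f\in\mathcal{H}_{fin}^{(q,p)}$ is in general \emph{not} a finite sum (the level sets $\Omega^{j}$ are nonempty for all sufficiently negative $j$), so the step $|T_g(f)|\le\sum_{j,k}|\lambda_{j,k}|\bigl|\int(g-P_{Q_{j,k}}^{\delta}g)\,a_{j,k}\bigr|$ requires a genuine justification of the interchange of sum and integral. This is typically handled by an $L^1$-domination argument of the same flavor as the one carried out later in this paper around \eqref{applicattheoqp5gener}, or --- as you say --- by the finite-atomic-norm equivalence for $r<\infty$ followed by a separate treatment of $r=\infty$. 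Apart from this routine bookkeeping, your identification of the two technical pressure points (the finite-vs-infinite decomposition in (1), and the amalgam/overlap estimate for the test function in (2) with $\ell^{\eta}\hookrightarrow\ell^{q}$ pinning down $\eta<q$) matches what one expects the argument in \cite{AbFt3} to rest on.
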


\begin{remark}\cite[Remark 3.1]{AbFt3}\label{remarqedualeqp1}
Suppose that $0<q\leq1<p<\infty$ and $\delta\geq\left\lfloor d\left(\frac{1}{q}-1\right)\right\rfloor$. Let $1<r<p'$, $0<\eta<q$ and $0<\eta_1\leq1$. Then we have $$\mathcal{L}_{1,\phi_1,\delta}^{(q,p,\eta_1)}\cong\left(\mathcal{H}^{(q,p)}\right)^{\ast}\cong\mathcal{L}_{r,\phi_1,\delta}^{(q,p,\eta)}\hookrightarrow\mathcal{L}_{r,\phi_2,\delta}\cong\left(\mathcal{H}^q\right)^{\ast}$$ and $$\mathcal{L}_{1,\phi_1,\delta}^{(q,p,\eta_1)}\cong\left(\mathcal{H}^{(q,p)}\right)^{\ast}\cong\mathcal{L}_{r,\phi_1,\delta}^{(q,p,\eta)}\hookrightarrow L^{p'}\cong\left(\mathcal{H}^p\right)^{\ast}.$$ For $q=1$, we have $$(L^{\infty},\ell^{p'})\hookrightarrow\mathcal{L}_{1,\phi_1,\delta}^{(1,p,\eta_1)}\cong\mathcal{L}_{r,\phi_1,\delta}^{(1,p,\eta)}\hookrightarrow BMO(\mathbb{R}^d)$$ and $$(L^{\infty},\ell^{p'})\hookrightarrow\mathcal{L}_{1,\phi_1,\delta}^{(1,p,\eta_1)}\cong\mathcal{L}_{r,\phi_1,\delta}^{(1,p,\eta)}\hookrightarrow L^{p'}.$$  
 Moreover, the inclusion of $(L^{\infty},\ell^{p'})$ in $\mathcal{L}_{1,\phi_1,\delta}^{(1,p,\eta_1)}\cong\mathcal{L}_{r,\phi_1,\delta}^{(1,p,\eta)}$ is strict.  
\end{remark} 

\begin{thm}\cite[Theorem 3.8]{AbFt3}\label{theoremdualunifie}
Suppose that $0<q\leq1$, $q\leq p<\infty$ and $\delta\geq\left\lfloor d\left(\frac{1}{q}-1\right)\right\rfloor$. Let $\max\left\{1,p\right\}<r\leq\infty$. Then $\left(\mathcal{H}^{(q,p)}\right)^{\ast}$ is isomorphic to $\mathcal{L}_{r',\phi_1,\delta}^{(q,p,\eta)}$, where $\frac{1}{r}+\frac{1}{r'}=1$, $0<\eta<q$ if $r<\infty$, and $0<\eta\leq1$ if $r=\infty$, with equivalent norms. More precisely, we have the following assertions:
\begin{enumerate}
\item Let $g\in\mathcal{L}_{r',\phi_1,\delta}^{(q,p,\eta)}$ and $\mathcal{H}_{fin}^{(q,p)}$ be the subspace of $\mathcal{H}^{(q,p)}$ consisting of finite linear combinations of $(q,r,\delta)$-atoms. Then the mapping $$T_g:\mathcal{H}_{fin}^{(q,p)}\ni f\longmapsto\int_{\mathbb{R}^d}g(x)f(x)dx,$$ extends to a unique continuous linear functional $\widetilde{T_g}$ on $\mathcal{H}^{(q,p)}$ such that 
\begin{eqnarray*}
\left\|\widetilde{T_g}\right\|=\left\|T_g\right\|\leq C\left\|g\right\|_{\mathcal{L}_{r',\phi_{1},\delta}^{(q,p,\eta)}}, 
\end{eqnarray*} 
where $C>0$ is a constant independent of $g$. 
\item Conversely, for any $T\in\left(\mathcal{H}^{(q,p)}\right)^{\ast}$, there exists $g\in\mathcal{L}_{r',\phi_1,\delta}^{(q,p,\eta)}$ such that $T=T_g$; namely $$T(f)=\int_{\mathbb{R}^d}g(x)f(x)dx,\ \text{ for all }\ f\in\mathcal{H}_{fin}^{(q,p)},$$ and
\begin{eqnarray*}
\left\|g\right\|_{\mathcal{L}_{r',\phi_{1},\delta}^{(q,p,\eta)}}\leq C\left\|T\right\|, 
\end{eqnarray*} 
where $C>0$ is a constant independent of $T$. 
\end{enumerate}
\end{thm}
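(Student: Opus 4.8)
The plan is to deduce Theorem \ref{theoremdualunifie} from Theorem \ref{theoremdualqp} together with the already-established duality results for the remaining exponent range, thereby unifying all cases $0<q\le 1$, $q\le p<\infty$ into a single statement. Concretely, I would split according to the position of $p$ relative to $1$. When $1<p<\infty$, the condition $\max\{1,p\}<r\le\infty$ reads $p<r\le\infty$, and the hypotheses on $\eta$ are exactly those of Theorem \ref{theoremdualqp}; so in this case the assertions (1) and (2) are \emph{literally} the content of Theorem \ref{theoremdualqp}, and nothing new is needed. It then remains to treat the case $0<q\le p\le 1$. Here $\max\{1,p\}=1$, so $r$ ranges over $1<r\le\infty$, and $r'$ ranges over $1\le r'<\infty$ (with $r'=1$ when $r=\infty$).

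For the range $0<q\le p\le1$, I would invoke the duality identification of $\mathcal H^{(q,p)}$ with Campanato-type spaces obtained in \cite{AbFt1} and \cite{AbFt2} (recalled in Section 2), which asserts that $\left(\mathcal H^{(q,p)}\right)^\ast\cong\mathcal L_{r',\phi_1,\delta}$ with equivalent norms, the pairing being exactly $f\mapsto\int_{\mathbb R^d}g(x)f(x)dx$ on $\mathcal H_{fin}^{(q,p)}$, for $1\le r'<\infty$ and $\delta\ge\lfloor d(1/q-1)\rfloor$. The bridge to the statement as phrased here is the norm equivalence \eqref{dualqp3bisbis}, namely $\mathcal L_{r',\phi_1,\delta}^{(q,p,\eta)}=\mathcal L_{r',\phi_1,\delta}$ with equivalent norms whenever $0<q,p\le1$ and $0<\eta\le1$; note that the hypothesis $0<\eta<q$ imposed in Theorem \ref{theoremdualunifie} when $r<\infty$ forces in particular $\eta<q\le1$, so \eqref{dualqp3bisbis} applies, and likewise when $r=\infty$ the hypothesis is $0<\eta\le1$. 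Chaining the isomorphism $\left(\mathcal H^{(q,p)}\right)^\ast\cong\mathcal L_{r',\phi_1,\delta}$ with the identity $\mathcal L_{r',\phi_1,\delta}\cong\mathcal L_{r',\phi_1,\delta}^{(q,p,\eta)}$ yields both the norm-control assertion (1) and the representation assertion (2) in this range, with the constants absorbed into the equivalence constants.

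Finally I would combine the two ranges. Since $\mathcal H_{fin}^{(q,p)}$ is dense in $\mathcal H^{(q,p)}$ (Remark 4.7 of \cite{AbFt}, Remark 3.12 of \cite{AbFt2}, recalled in Section 2), the functional $T_g$ defined on $\mathcal H_{fin}^{(q,p)}$ has at most one continuous extension to $\mathcal H^{(q,p)}$, which legitimizes the phrase ``unique continuous linear functional $\widetilde{T_g}$''; and the representation in (2) being required only on $\mathcal H_{fin}^{(q,p)}$, it is consistent with this density. Thus the uniform statement holds verbatim in both ranges, and the theorem follows.

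The main obstacle, and the only genuinely non-routine point, is checking that the admissible parameter windows in Theorem \ref{theoremdualunifie} — the range of $r$, of $r'$, of $\eta$, and of the vanishing-moment order $\delta$ — are consistent across the gluing, i.e. that for every $(q,p,r,\eta,\delta)$ satisfying the hypotheses of the unified statement one is genuinely in the scope of either Theorem \ref{theoremdualqp} (case $p>1$) or of the $\mathcal L_{r',\phi_1,\delta}$-duality of \cite{AbFt1,AbFt2} upgraded by \eqref{dualqp3bisbis} (case $p\le1$); in particular one must verify that the borderline $r=\infty$, $r'=1$ sub-case is covered by the $\eta\le1$ clause and that \eqref{dualqp3bisbis} indeed holds there, and that the isomorphism constants furnished by the two regimes can be merged into a single $C$. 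Everything else is bookkeeping.
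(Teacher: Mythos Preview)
Your proposal is correct, and it is essentially the same approach the paper (and the source \cite{AbFt3}) takes: this theorem is not proved anew here but is cited as \cite[Theorem 3.8]{AbFt3}, and the unification is exactly by case-splitting on $p$---for $p>1$ one invokes Theorem \ref{theoremdualqp} verbatim, while for $p\le1$ one combines the earlier duality $\left(\mathcal H^{(q,p)}\right)^\ast\cong\mathcal L_{r',\phi_1,\delta}$ from \cite{AbFt1} with the norm equivalence \eqref{dualqp3bisbis}. The paper even records, immediately after the statement, that for $p\le1$ one may take $0<\eta\le1$ throughout, which is precisely the relaxation you identify via \eqref{dualqp3bisbis}.
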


In Theorem \ref{theoremdualunifie}, when $p\leq1$, we can take $0<\eta\leq1$ for $1<r\leq\infty$.

\subsection{On the dual space of $\mathcal{H}_{\mathrm{loc}}^{(q,p)}$}

Let's fix an integer $\delta\geq0$. Let $1\leq r<\infty$, $g$ be a function in $L_{\mathrm{loc}}^r$ and $\Omega$ be an open subset such that $\Omega\neq\mathbb{R}^d$. We put
\begin{align*}\textit{O}(g,\Omega,r)^{\mathrm{loc}}&:=\sup\left[\sum_{|\widetilde{Q^n}|<1}|\widetilde{Q^n}|^{\frac{1}{r'}}\left(\int_{\widetilde{Q^n}}\left|g(x)-P_{\widetilde{Q^n}}^{\delta}(g)(x)\right|^{r}dx\right)^{\frac{1}{r}}\right. \\
 &+\left. \sum_{|\widetilde{Q^n}|\geq1}|\widetilde{Q^n}|^{\frac{1}{r'}}\left(\int_{\widetilde{Q^n}}|g(x)|^{r}dx\right)^{\frac{1}{r}}\right],
\end{align*}
where $\frac{1}{r}+\frac{1}{r'}=1$ and the supremum is taken over all families of cubes $\left\{Q^n\right\}_{n\geq0}$ such that $Q^n\subset\Omega$ for all $n\geq0$ and $\sum_{n\geq0}\chi_{_{Q^n}}\leq K(d)$, with $\widetilde{Q^n}=C_0Q^n$, $K(d)>1$ and $C_0>1$ are the same constants as in the definition of $\textit{O}(g,\Omega,r)$.   

\begin{defn}
Suppose that $0<q\leq1$ and $0<p<\infty$. Let $0<\eta<\infty$ and $1\leq r<\infty$. We say that a function $g$ in $L_{\mathrm{loc}}^r$ belongs to $\mathcal{L}_{r,\phi_{1},\delta}^{(q,p,\eta)\mathrm{loc}}:=\mathcal{L}_{r,\phi_{1},\delta}^{(q,p,\eta)\mathrm{loc}}(\mathbb{R}^d)$ if there exists a constant $C>0$ such that, for all families of open subsets $\left\{\Omega^j\right\}_{j\in\mathbb{Z}}$ with $\left\|\sum_{j\in\mathbb{Z}}2^{j\eta}\chi_{\Omega^j}\right\|_{\frac{q}{\eta},\frac{p}{\eta}}<\infty$, we have
\begin{align}
\sum_{j\in\mathbb{Z}}2^j\textit{O}(g,\Omega^j,r)^{\mathrm{loc}}\leq C\left\|\sum_{j\in\mathbb{Z}}2^{j\eta}\chi_{\Omega^j}\right\|_{\frac{q}{\eta},\frac{p}{\eta}}^{\frac{1}{\eta}}. \label{dualqploc}
\end{align} 
\end{defn}
 
We define $\left\|g\right\|_{\mathcal{L}_{r,\phi_{1},\delta}^{(q,p,\eta)\mathrm{loc}}}:=\inf\left\{C>0:\ C \text{ satisfies } (\ref{dualqploc})\right\}$, when $g\in\mathcal{L}_{r,\phi_{1},\delta}^{(q,p,\eta)\mathrm{loc}}$. $\left\|\cdot\right\|_{\mathcal{L}_{r,\phi_{1},\delta}^{(q,p,\eta)\mathrm{loc}}}$ defines a norm on $\mathcal{L}_{r,\phi_{1},\delta}^{(q,p,\eta)\mathrm{loc}}$.
 
\begin{defn}\cite[Definition 4.1]{AbFt2} Let $1\leq r\leq\infty$ and $\phi: \mathcal{Q}\rightarrow (0,\infty)$ be a function. The space $\mathcal{L}_{r,\phi,\delta}^{\mathrm{loc}}:=\mathcal{L}_{r,\phi,\delta}^{\mathrm{loc}}(\mathbb{R}^d)$ is the set of all $f\in L_{\mathrm{loc}}^r$ such that $\left\|f\right\|_{{\mathcal{L}}_{r,\phi,\delta}^{\mathrm{loc}}}<\infty$, where 
\begin{align*}
\left\|f\right\|_{{\mathcal{L}}_{r,\phi,\delta}^{\mathrm{loc}}}&:=\sup_{\underset{|Q|\geq1}{Q\in\mathcal{Q}}}\frac{1}{\phi(Q)}\left(\frac{1}{|Q|}\int_{Q}|f(x)|^rdx\right)^{\frac{1}{r}}\\
&+\sup_{\underset{|Q|<1}{Q\in\mathcal{Q}}}\frac{1}{\phi(Q)}\left(\frac{1}{|Q|}\int_{Q}\left|f(x)-P_Q^{\delta}(f)(x)\right|^rdx\right)^{\frac{1}{r}},
\end{align*}
when $r<\infty$, and
\begin{align*}
\left\|f\right\|_{\mathcal{L}_{r,\phi,\delta}^{\mathrm{loc}}}:=\sup_{\underset{|Q|\geq1}{Q\in\mathcal{Q}}}\frac{1}{\phi(Q)}\left\|f\right\|_{L^{\infty}(Q)}+\sup_{\underset{|Q|<1}{Q\in\mathcal{Q}}}\frac{1}{\phi(Q)}\left\|f-P_Q^{\delta}(f)\right\|_{L^{\infty}(Q)}, 
\end{align*}
when $r=\infty$.
\end{defn}  

The following inequalities were obtained in \cite{AbFt3}.\\

For $0<q\leq1$, $0<p<\infty$, $0<\eta<\infty$ and $1\leq r<\infty$, we have   
\begin{align}
\left\|g\right\|_{\mathcal{L}_{r,\phi_{1},\delta}^{\mathrm{loc}}}\leq2\left\|g\right\|_{\mathcal{L}_{r,\phi_{1},\delta}^{(q,p,\eta)\mathrm{loc}}}, \label{dualqploc3}
\end{align}
for all $g\in\mathcal{L}_{r,\phi_{1},\delta}^{(q,p,\eta)\mathrm{loc}}$, and hence
\begin{align}
\left\|g\right\|_{\mathcal{L}_{r,\phi_{2},\delta}^{\mathrm{loc}}}\leq\left\|g\right\|_{\mathcal{L}_{r,\phi_{1},\delta}^{\mathrm{loc}}}\leq2\left\|g\right\|_{\mathcal{L}_{r,\phi_{1},\delta}^{(q,p,\eta)\mathrm{loc}}}, \label{dualqploc3bbi}
\end{align}
when $0<q\leq1$ and $q\leq p<\infty$. Futhermore, when $0<q,p\leq1$ and $0<\eta\leq1$, we have  
\begin{align}
\left\|g\right\|_{\mathcal{L}_{r,\phi_{1},\delta}^{(q,p,\eta)\mathrm{loc}}}\approx\left\|g\right\|_{\mathcal{L}_{r,\phi_{1},\delta}^{\mathrm{loc}}}, \label{dualqploc3bisbis}
\end{align} 
which means that $\mathcal{L}_{r,\phi_{1},\delta}^{(q,p,\eta)\mathrm{loc}}=\mathcal{L}_{r,\phi_{1},\delta}^{\mathrm{loc}}$ with equivalent norms. 

We have also for $0<q\leq1$, $0<p<\infty$, $0<\eta<\infty$ and $1\leq r<\infty$, 
\begin{align}
\left\|g\right\|_{\mathcal{L}_{r,\phi_{1},\delta}^{(q,p,\eta)}}\lsim\left\|g\right\|_{\mathcal{L}_{r,\phi_{1},\delta}^{(q,p,\eta)\mathrm{loc}}}, \label{dualqploc3bisbisbis}
\end{align}
for all $g\in\mathcal{L}_{r,\phi_{1},\delta}^{(q,p,\eta)\mathrm{loc}}$. 
 
For $T\in\left(\mathcal{H}_{\mathrm{loc}}^{(q,p)}\right)^{\ast}$ the topological dual space of $\mathcal{H}_{\mathrm{loc}}^{(q,p)}$, we set $$\left\|T\right\|:=\left\|T\right\|_{\left(\mathcal{H}_{\mathrm{loc}}^{(q,p)}\right)^{\ast}}=\sup_{\underset{\left\|f\right\|_{\mathcal{H}_{\mathrm{loc}}^{(q,p)}}\leq 1}{f\in\mathcal{H}_{\mathrm{loc}}^{(q,p)}}}|T(f)|.$$

The hereafter results were proved in \cite{AbFt3}. 

\begin{thm}\cite[Theorem 4.3]{AbFt3} \label{theoremdualqploc}
Suppose that $0<q\leq1<p<\infty$ and $\delta\geq\left\lfloor d\left(\frac{1}{q}-1\right)\right\rfloor$. Let $p<r\leq\infty$. Then $\left(\mathcal{H}_{\mathrm{loc}}^{(q,p)}\right)^{\ast}$ is isomorphic to $\mathcal{L}_{r',\phi_1,\delta}^{(q,p,\eta)\mathrm{loc}}$, where $\frac{1}{r}+\frac{1}{r'}=1$, $0<\eta<q$ if $r<\infty$, and $0<\eta\leq1$ if $r=\infty$, with equivalent norms. More precisely, we have the following assertions:  
\begin{enumerate}
\item Let $g\in\mathcal{L}_{r',\phi_1,\delta}^{(q,p,\eta)\mathrm{loc}}$ and $\mathcal{H}_{\mathrm{loc},fin}^{(q,p)}$ be the subspace of $\mathcal{H}_{\mathrm{loc}}^{(q,p)}$ consisting of finite linear combinations of local $(q,r,\delta)$-atoms. Then the mapping $$T_g:\mathcal{H}_{\mathrm{loc},fin}^{(q,p)}\ni f\longmapsto\int_{\mathbb{R}^d}g(x)f(x)dx,$$ extends to a unique continuous linear functional $\widetilde{T_g}$ on $\mathcal{H}_{\mathrm{loc}}^{(q,p)}$ such that  
\begin{eqnarray*}
\left\|\widetilde{T_g}\right\|=\left\|T_g\right\|\leq C\left\|g\right\|_{\mathcal{L}_{r',\phi_{1},\delta}^{(q,p,\eta)\mathrm{loc}}}, 
\end{eqnarray*} 
where $C>0$ is a constant independent of $g$. \label{dualpointqploc1}
\item Conversely, for any $T\in\left(\mathcal{H}_{\mathrm{loc}}^{(q,p)}\right)^{\ast}$, there exists $g\in\mathcal{L}_{r',\phi_1,\delta}^{(q,p,\eta)\mathrm{loc}}$ such that $T=T_g$; namely $$T(f)=\int_{\mathbb{R}^d}g(x)f(x)dx,\ \text{ for all }\ f\in\mathcal{H}_{\mathrm{loc},fin}^{(q,p)},$$ and
\begin{eqnarray*}
\left\|g\right\|_{\mathcal{L}_{r',\phi_{1},\delta}^{(q,p,\eta)\mathrm{loc}}}\leq C\left\|T\right\|, 
\end{eqnarray*} 
where $C>0$ is a constant independent of $T$. \label{dualpointqploc2}
\end{enumerate}
\end{thm}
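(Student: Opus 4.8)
The plan is to prove the two assertions separately, following closely the proof of the global statement Theorem~\ref{theoremdualqp}; the one genuinely new feature is that cubes of side length $<1$ must be treated with a polynomial correction while cubes of side length $\ge1$ must be treated without it --- which is exactly why $\textit{O}(g,\Omega,r)^{\mathrm{loc}}$ is a sum of two pieces. Throughout I would use: that $\mathcal{H}_{\mathrm{loc},fin}^{(q,p)}$ is dense in $\mathcal{H}_{\mathrm{loc}}^{(q,p)}$; the two-sided local atomic characterization of $\mathcal{H}_{\mathrm{loc}}^{(q,p)}$ recalled in Section~2, in particular that a finite sum $\sum_{j}\sum_{n}\lambda_{j,n}\mathbf{a}_{j,n}$ of local $(q,r,\delta)$-atoms $\mathbf{a}_{j,n}$ on cubes $Q^{j,n}$ with $\sum_{n}\chi_{Q^{j,n}}\le K(d)$, $Q^{j,n}\subset\Omega^{j}$ and $|\lambda_{j,n}|\lsim2^{j}|Q^{j,n}|^{1/q}$ satisfies $\big\|\sum_{j,n}\lambda_{j,n}\mathbf{a}_{j,n}\big\|_{\mathcal{H}_{\mathrm{loc}}^{(q,p)}}\lsim\big\|\sum_{j}2^{j\eta}\chi_{\Omega^{j}}\big\|_{q/\eta,p/\eta}^{1/\eta}$; and the standard reformulation $\big\|\sum_{j}2^{j\eta}\chi_{\{Gf>2^{j}\}}\big\|_{q/\eta,p/\eta}^{1/\eta}\approx\|f\|_{\mathcal{H}_{\mathrm{loc}}^{(q,p)}}$, where $Gf$ is the local grand maximal function of $f$. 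I would treat $r<\infty$ in full; for $r=\infty$ the representing function $g$ is first produced from a finite exponent $p<r_{0}<\infty$, and the remaining estimate uses $L^{1}$--$L^{\infty}$ duality in place of $L^{r'}$--$L^{r}$ duality.

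For assertion (1), let $g\in\mathcal{L}_{r',\phi_{1},\delta}^{(q,p,\eta)\mathrm{loc}}$. I would first estimate $\int g\mathbf{a}$ for a local $(q,r,\delta)$-atom $\mathbf{a}$ with cube $Q$, writing $\widetilde{Q}=C_{0}Q$: if $|\widetilde{Q}|<1$ (whence $|Q|<1$ and $\mathbf{a}$ has vanishing moments up to order $\delta$), then $\int g\mathbf{a}=\int\big(g-P_{\widetilde{Q}}^{\delta}(g)\big)\mathbf{a}$, and Hölder together with $\|\mathbf{a}\|_{r}\le|Q|^{1/r-1/q}$ yields $|\int g\mathbf{a}|\lsim|\widetilde{Q}|^{-1/q}\,|\widetilde{Q}|^{1/r}\|g-P_{\widetilde{Q}}^{\delta}(g)\|_{L^{r'}(\widetilde{Q})}$; if $|\widetilde{Q}|\ge1$, Hölder gives directly $|\int g\mathbf{a}|\lsim|\widetilde{Q}|^{-1/q}\,|\widetilde{Q}|^{1/r}\|g\|_{L^{r'}(\widetilde{Q})}$ --- in both cases $|\widetilde{Q}|^{-1/q}$ times the term that $\widetilde{Q}$ contributes to $\textit{O}(g,\cdot,r')^{\mathrm{loc}}$. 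Then, for $f\in\mathcal{H}_{\mathrm{loc},fin}^{(q,p)}$ with $\Omega^{j}=\{Gf>2^{j}\}$, I would use the local atomic decomposition $f=\sum_{j}\sum_{n}\lambda_{j,n}\mathbf{a}_{j,n}$ with $\mathbf{a}_{j,n}$ supported in $Q^{j,n}$, $C_{0}Q^{j,n}\subset\Omega^{j}$ (after the customary mild enlargement of $\Omega^{j}$), $\sum_{n}\chi_{Q^{j,n}}\le K(d)$ and $|\lambda_{j,n}|\lsim2^{j}|Q^{j,n}|^{1/q}$; pairing with $g$, summing the atomwise bounds, regrouping at each level $j$ the cubes $\{C_{0}Q^{j,n}\}_{n}$ into $\textit{O}(g,\Omega^{j},r')^{\mathrm{loc}}$ via the bounded overlap, and invoking~(\ref{dualqploc}) together with the equivalence above, I obtain $|\int gf|\lsim\|g\|_{\mathcal{L}_{r',\phi_{1},\delta}^{(q,p,\eta)\mathrm{loc}}}\|f\|_{\mathcal{H}_{\mathrm{loc}}^{(q,p)}}$. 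By density, $T_{g}$ extends uniquely to $\widetilde{T_{g}}\in(\mathcal{H}_{\mathrm{loc}}^{(q,p)})^{\ast}$ with $\|\widetilde{T_{g}}\|=\|T_{g}\|\le C\|g\|_{\mathcal{L}_{r',\phi_{1},\delta}^{(q,p,\eta)\mathrm{loc}}}$.

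For assertion (2), let $T\in(\mathcal{H}_{\mathrm{loc}}^{(q,p)})^{\ast}$ and fix an increasing sequence of cubes $Q_{1}\subset Q_{2}\subset\cdots$ with $|Q_{m}|\ge1$ and $\bigcup_{m}Q_{m}=\mathbb{R}^d$. Any $h\in L^{r}(Q_{m})$ with $\|h\|_{r}\le|Q_{m}|^{1/r-1/q}$ is a local $(q,r,\delta)$-atom (no moment condition, since $|Q_{m}|\ge1$), so $L^{r}(Q_{m})\hookrightarrow\mathcal{H}_{\mathrm{loc}}^{(q,p)}$ with a constant depending only on $m$; hence $T|_{L^{r}(Q_{m})}$ is bounded, represented by a unique $g_{m}\in L^{r'}(Q_{m})$. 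For $m<m'$, testing against $h\in L^{r}(Q_{m})$ and using uniqueness gives $g_{m'}=g_{m}$ a.e.\ on $Q_{m}$ (no polynomial ambiguity, precisely because on a cube of side length $\ge1$ one tests against all of $L^{r}$), so the $g_{m}$ glue to a global $g\in L^{r'}_{\mathrm{loc}}(\mathbb{R}^d)$; since every $f\in\mathcal{H}_{\mathrm{loc},fin}^{(q,p)}$ has compact support it lies in some $L^{r}(Q_{m})$, whence $T(f)=\int_{\mathbb{R}^d}gf$. It remains --- and this is the crux --- to show $\|g\|_{\mathcal{L}_{r',\phi_{1},\delta}^{(q,p,\eta)\mathrm{loc}}}\lsim\|T\|$. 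Given $\{\Omega^{j}\}_{j}$ with $M:=\big\|\sum_{j}2^{j\eta}\chi_{\Omega^{j}}\big\|_{q/\eta,p/\eta}^{1/\eta}<\infty$, I would for each $j$ pick a bounded-overlap family $\{Q^{j,n}\}_{n}$ in $\Omega^{j}$ (which may be taken finite) nearly realizing $\textit{O}(g,\Omega^{j},r')^{\mathrm{loc}}$, and for each $\widetilde{Q^{j,n}}=C_{0}Q^{j,n}$ use $L^{r'}$--$L^{r}$ duality on $L^{r}(\widetilde{Q^{j,n}})$ --- on its subspace of functions with vanishing moments up to order $\delta$ when $|\widetilde{Q^{j,n}}|<1$, using that $P^{\delta}_{\widetilde{Q^{j,n}}}$ is a projection on $L^{r'}(\widetilde{Q^{j,n}})$ with norm independent of the cube --- to select $\psi_{j,n}$ supported in $\widetilde{Q^{j,n}}$ with $\|\psi_{j,n}\|_{r}\le1$, $\int g\psi_{j,n}\ge0$, and $\int g\psi_{j,n}$ at least a fixed constant times the $L^{r'}$-oscillation of $g$ on $\widetilde{Q^{j,n}}$. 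Put $h_{j,n}:=2^{j}|\widetilde{Q^{j,n}}|^{1/r}\psi_{j,n}=2^{j}|\widetilde{Q^{j,n}}|^{1/q}\mathbf{a}_{j,n}$, where $\mathbf{a}_{j,n}:=|\widetilde{Q^{j,n}}|^{1/r-1/q}\psi_{j,n}$ is a local $(q,r,\delta)$-atom. Then $f_{N}:=\sum_{|j|\le N}\sum_{n}h_{j,n}$ is a finite linear combination of local atoms with coefficients $2^{j}|\widetilde{Q^{j,n}}|^{1/q}$, so $f_{N}\in\mathcal{H}_{\mathrm{loc},fin}^{(q,p)}$ and the converse atomic estimate gives $\|f_{N}\|_{\mathcal{H}_{\mathrm{loc}}^{(q,p)}}\lsim\big\|\sum_{|j|\le N}2^{j\eta}\chi_{\Omega^{j}}\big\|_{q/\eta,p/\eta}^{1/\eta}\le M$, uniformly in $N$; on the other hand $\int gf_{N}=\sum_{|j|\le N}\sum_{n}2^{j}|\widetilde{Q^{j,n}}|^{1/r}\int g\psi_{j,n}$ is bounded below by a fixed constant times $\sum_{|j|\le N}2^{j}\textit{O}(g,\Omega^{j},r')^{\mathrm{loc}}$, while $\int gf_{N}=T(f_{N})\le\|T\|\,\|f_{N}\|_{\mathcal{H}_{\mathrm{loc}}^{(q,p)}}\lsim\|T\|\,M$. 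Letting $N\to\infty$ yields $\sum_{j}2^{j}\textit{O}(g,\Omega^{j},r')^{\mathrm{loc}}\lsim\|T\|\,M$; since $\{\Omega^{j}\}$ was arbitrary, $g\in\mathcal{L}_{r',\phi_{1},\delta}^{(q,p,\eta)\mathrm{loc}}$ with $\|g\|_{\mathcal{L}_{r',\phi_{1},\delta}^{(q,p,\eta)\mathrm{loc}}}\lsim\|T\|$. Combining (1) and (2) gives the asserted isomorphism with equivalent norms.

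The step I expect to be the main obstacle is this last one, the estimate $\|g\|_{\mathcal{L}_{r',\phi_{1},\delta}^{(q,p,\eta)\mathrm{loc}}}\lsim\|T\|$. The delicate points are: arranging the normalization of $\psi_{j,n}$ and the weights $2^{j}|\widetilde{Q^{j,n}}|^{1/r}$ so that $f_{N}$ is literally a finite combination of local atoms with the coefficient bound $2^{j}|\widetilde{Q^{j,n}}|^{1/q}$ that the converse atomic estimate requires, while $\int gh_{j,n}$ still recaptures $2^{j}$ times the oscillation term; pushing the coefficient bookkeeping through the $(L^{q/\eta},\ell^{p/\eta})$-quasi-norm so that the converse atomic estimate returns exactly $\big\|\sum_{j}2^{j\eta}\chi_{\Omega^{j}}\big\|_{q/\eta,p/\eta}^{1/\eta}$ and nothing larger, uniformly in $N$; and keeping the regimes $|\widetilde{Q^{j,n}}|<1$ and $|\widetilde{Q^{j,n}}|\ge1$ cleanly separated throughout (the former use the moment-free subspace of $L^{r}$ and the uniform boundedness of $P^{\delta}_{\widetilde{Q^{j,n}}}$ on $L^{r'}$, the latter use all of $L^{r}$, and $\textit{O}(\cdot,\cdot,r')^{\mathrm{loc}}$ combines both) --- this is the precise point where the present local statement diverges from its global counterpart Theorem~\ref{theoremdualqp}.
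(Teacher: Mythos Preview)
The paper does not prove this theorem: it is quoted from the authors' earlier work \cite[Theorem 4.3]{AbFt3} and stated in Section~2 purely as a prerequisite, with no argument given here. So there is no proof in the present paper to compare your proposal against.

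That said, your outline is the expected and correct strategy, namely the local adaptation of the global duality argument (Theorem~\ref{theoremdualqp}, likewise only cited here). The local-specific features you single out are exactly the right ones: for (1), splitting the atomwise estimate according to whether $|\widetilde Q|<1$ (use the vanishing moments to insert $P_{\widetilde Q}^{\delta}(g)$) or $|\widetilde Q|\ge1$ (no correction), so that the sum over a bounded-overlap family reconstitutes $\textit{O}(g,\Omega^{j},r')^{\mathrm{loc}}$; for (2), producing the representing function $g$ via an exhaustion by cubes of measure $\ge1$, which is the clean way to avoid any polynomial ambiguity in the local setting, since on such cubes every $L^{r}$ function (suitably normalized) is already a local atom and the Riesz representative is unique on the nose. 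Your identification of the crux --- building the test function $f_{N}$ so that the converse atomic estimate returns exactly $\big\|\sum_{j}2^{j\eta}\chi_{\Omega^{j}}\big\|_{q/\eta,p/\eta}^{1/\eta}$ while $\int gf_{N}$ dominates the truncated sum $\sum_{|j|\le N}2^{j}\textit{O}(g,\Omega^{j},r')^{\mathrm{loc}}$ --- is accurate, and the normalization $h_{j,n}=2^{j}|\widetilde{Q^{j,n}}|^{1/q}\mathbf a_{j,n}$ you propose is the right one.
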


\begin{remark}\cite[Remark 4.1]{AbFt3}\label{remarqedualeqploc1}
Suppose that $0<q\leq1<p<\infty$ and $\delta\geq\left\lfloor d\left(\frac{1}{q}-1\right)\right\rfloor$. Let $1<r<p'$, $0<\eta<q$ and $0<\eta_1\leq1$. We have $$\mathcal{L}_{1,\phi_1,\delta}^{(q,p,\eta_1)\mathrm{loc}}\cong\left(\mathcal{H}_{\mathrm{loc}}^{(q,p)}\right)^{\ast}\cong\mathcal{L}_{r,\phi_1,\delta}^{(q,p,\eta)\mathrm{loc}}\hookrightarrow\mathcal{L}_{r,\phi_2,\delta}^{\mathrm{loc}}\cong\left(\mathcal{H}_{\mathrm{loc}}^q\right)^{\ast}$$ and $$\mathcal{L}_{1,\phi_1,\delta}^{(q,p,\eta_1)\mathrm{loc}}\cong\left(\mathcal{H}_{\mathrm{loc}}^{(q,p)}\right)^{\ast}\cong\mathcal{L}_{r,\phi_1,\delta}^{(q,p,\eta)\mathrm{loc}}\hookrightarrow L^{p'}\cong\left(\mathcal{H}_{\mathrm{loc}}^p\right)^{\ast}.$$ In particular when $q=1$, we have $$(L^{\infty},\ell^{p'})\hookrightarrow\mathcal{L}_{1,\phi_1,\delta}^{(1,p,\eta_1)\mathrm{loc}}\cong\mathcal{L}_{r,\phi_1,\delta}^{(1,p,\eta)\mathrm{loc}}\hookrightarrow bmo(\mathbb{R}^d)$$ and $$(L^{\infty},\ell^{p'})\hookrightarrow\mathcal{L}_{1,\phi_1,\delta}^{(1,p,\eta_1)\mathrm{loc}}\cong\mathcal{L}_{r,\phi_1,\delta}^{(1,p,\eta)\mathrm{loc}}\hookrightarrow L^{p'}.$$ 
\end{remark}  

\begin{thm}\cite[Theorem 4.4]{AbFt3} \label{theoremdualunifieloc}
Suppose that $0<q\leq1$, $q\leq p<\infty$ and $\delta\geq\left\lfloor d\left(\frac{1}{q}-1\right)\right\rfloor$. Let $\max\left\{1,p\right\}<r\leq\infty$. Then $\left(\mathcal{H}_{\mathrm{loc}}^{(q,p)}\right)^{\ast}$ is isomorphic to $\mathcal{L}_{r',\phi_1,\delta}^{(q,p,\eta)\mathrm{loc}}$, where $\frac{1}{r}+\frac{1}{r'}=1$, $0<\eta<q$ if $r<\infty$, and $0<\eta\leq1$ if $r=\infty$, with equivalent norms. More precisely, we have the following assertions:  
\begin{enumerate}
\item Let $g\in\mathcal{L}_{r',\phi_1,\delta}^{(q,p,\eta)\mathrm{loc}}$ and $\mathcal{H}_{\mathrm{loc},fin}^{(q,p)}$ be the subspace of $\mathcal{H}_{\mathrm{loc}}^{(q,p)}$ consisting of finite linear combinations of local $(q,r,\delta)$-atoms. Then the mapping $$T_g:\mathcal{H}_{\mathrm{loc},fin}^{(q,p)}\ni f\longmapsto\int_{\mathbb{R}^d}g(x)f(x)dx,$$ extends to a unique continuous linear functional $\widetilde{T_g}$ on $\mathcal{H}_{\mathrm{loc}}^{(q,p)}$ such that  
\begin{eqnarray*}
\left\|\widetilde{T_g}\right\|=\left\|T_g\right\|\leq C\left\|g\right\|_{\mathcal{L}_{r',\phi_{1},\delta}^{(q,p,\eta)\mathrm{loc}}}, 
\end{eqnarray*} 
where $C>0$ is a constant independent of $g$. 
\item Conversely, for any $T\in\left(\mathcal{H}_{\mathrm{loc}}^{(q,p)}\right)^{\ast}$, there exists $g\in\mathcal{L}_{r',\phi_1,\delta}^{(q,p,\eta)\mathrm{loc}}$ such that $T=T_g$; namely $$T(f)=\int_{\mathbb{R}^d}g(x)f(x)dx,\ \text{ for all }\ f\in\mathcal{H}_{\mathrm{loc},fin}^{(q,p)},$$ and
\begin{eqnarray*}
\left\|g\right\|_{\mathcal{L}_{r',\phi_{1},\delta}^{(q,p,\eta)\mathrm{loc}}}\leq C\left\|T\right\|, 
\end{eqnarray*} 
where $C>0$ is a constant independent of $T$. 
\end{enumerate}
\end{thm}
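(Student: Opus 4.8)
The plan is to follow the classical Fefferman–Stein-type duality argument, adapted to the amalgam setting, essentially mirroring the proof of Theorem~\ref{theoremdualqp} but keeping track of the local modification in the functional $\textit{O}(\cdot,\cdot,r)^{\mathrm{loc}}$ and in the atoms of $\mathcal{H}_{\mathrm{loc}}^{(q,p)}$. Indeed, since $\max\{1,p\}<r\le\infty$, the only case not already covered by Theorem~\ref{theoremdualqploc} is the range $p\le 1$ (so that $\max\{1,p\}=1$ and $r$ may be taken down to just above $1$, with $\eta\le 1$), while for $1<p<\infty$ this is precisely Theorem~\ref{theoremdualqploc}. So the substantive work is to handle $0<q\le p\le 1$. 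In that regime the inequalities \eqref{dualqploc3bisbis} tell us that $\mathcal{L}_{r',\phi_1,\delta}^{(q,p,\eta)\mathrm{loc}}=\mathcal{L}_{r',\phi_1,\delta}^{\mathrm{loc}}$ with equivalent norms whenever $0<\eta\le1$, so the target space is simply the local Campanato space $\mathcal{L}_{r',\phi_1,\delta}^{\mathrm{loc}}$, and the statement reduces to a local-Hardy-amalgam duality in the homogeneous-degree range $0<q\le p\le1$, which can be extracted from \cite{AbFt2} (cf. the non-local analogue handled in \cite{AbFt1,AbFt2} and quoted around \eqref{dualqp3bisbis}).

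Concretely, for part (1) I would take $g\in\mathcal{L}_{r',\phi_1,\delta}^{(q,p,\eta)\mathrm{loc}}$ and define $T_g(f)=\int_{\mathbb R^d}g f$ on $\mathcal{H}_{\mathrm{loc},fin}^{(q,p)}$. Testing on a single local $(q,r,\delta)$-atom $\mathbf{a}$ supported in a cube $Q$: when $|Q|<1$ use the moment conditions to subtract $P_Q^{\delta}(g)$ and bound $|\int g\mathbf{a}|\le\|\mathbf{a}\|_r\|g-P_Q^{\delta}(g)\|_{L^{r'}(Q)}\lesssim |Q|^{1/r-1/q}\,|Q|\,\phi_1(Q)\,\|g\|_{\mathcal{L}_{r',\phi_1,\delta}^{\mathrm{loc}}}$, which equals $\|\chi_Q\|_{q,p}^{1-1/q}\cdot(\text{something})$; when $|Q|\ge1$ no moment condition is available but the second term in $\|\cdot\|_{\mathcal{L}_{r',\phi_1,\delta}^{\mathrm{loc}}}$ controls $\|g\|_{L^{r'}(Q)}$ directly, giving the same type of bound. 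Summing over a finite atomic decomposition and invoking the atomic quasi-norm characterization of $\mathcal{H}_{\mathrm{loc}}^{(q,p)}$ (\cite[Theorems 3.2, 3.3, 3.9]{AbFt2}), together with density of $\mathcal{H}_{\mathrm{loc},fin}^{(q,p)}$ and the well-known subtlety that for $q\le1$ one must pass through the grand-maximal characterization to turn a bound on finite combinations into a bounded functional on all of $\mathcal{H}_{\mathrm{loc}}^{(q,p)}$, yields the extension $\widetilde{T_g}$ with $\|\widetilde{T_g}\|\lesssim\|g\|_{\mathcal{L}_{r',\phi_1,\delta}^{(q,p,\eta)\mathrm{loc}}}$.

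For part (2), given $T\in\left(\mathcal{H}_{\mathrm{loc}}^{(q,p)}\right)^{\ast}$, I would restrict $T$ to the subspace of $(q,r,\delta)$-atoms supported in a fixed large cube $Q_0$ with $|Q_0|\ge1$; on that subspace (normalized appropriately) $T$ is a bounded functional on a closed subspace of $L^r(Q_0)$, so by Hahn–Banach and the $L^r$–$L^{r'}$ duality it is represented by some $g_{Q_0}\in L^{r'}(Q_0)$. A compatibility/uniqueness check on overlapping cubes — using that for $|Q|<1$ the representative is determined only modulo $\mathcal{P}_\delta$, while for $|Q|\ge1$ it is genuinely determined — lets one glue these into a single $g\in L^{r'}_{\mathrm{loc}}$ with $T=T_g$ on $\mathcal{H}_{\mathrm{loc},fin}^{(q,p)}$. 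Then one must show $g\in\mathcal{L}_{r',\phi_1,\delta}^{(q,p,\eta)\mathrm{loc}}$ with $\|g\|\lesssim\|T\|$: fix a family $\{\Omega^j\}_{j\in\mathbb Z}$ with $\big\|\sum_j 2^{j\eta}\chi_{\Omega^j}\big\|_{q/\eta,p/\eta}<\infty$, and for each admissible family of cubes $\{Q^n\}$ inside $\Omega^j$ build a test function out of normalized "dual atoms" $\mathbf{b}^n$ (supported on $\widetilde{Q^n}$, with $\mathbf{b}^n=\operatorname{sgn}$-type weights times $|g-P^\delta(g)|^{r'-1}$ on small cubes and $|g|^{r'-1}$ on large cubes, suitably normalized) so that $T$ applied to the combination $\sum_{j,n}2^j\lambda_{j,n}\mathbf{b}^n$ reproduces $\sum_j 2^j\,\textit{O}(g,\Omega^j,r)^{\mathrm{loc}}$ up to constants; estimating the $\mathcal{H}_{\mathrm{loc}}^{(q,p)}$-quasi-norm of that combination by $\big\|\sum_j 2^{j\eta}\chi_{\Omega^j}\big\|_{q/\eta,p/\eta}^{1/\eta}$ (this is where the amalgam Fefferman–Stein vector-valued maximal inequality and the bounded-overlap hypothesis $\sum_n\chi_{Q^n}\le K(d)$ enter) closes the estimate. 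The main obstacle — and the step demanding genuine care rather than bookkeeping — is precisely this last construction: splitting the test function into its "small-cube" and "large-cube" parts so that the moment-cancellation needed for the $|Q|<1$ part is preserved while the $|Q|\ge1$ part (which carries no cancellation and lives in the local-Hardy "bmo-type" tail) is still controlled in $\mathcal{H}_{\mathrm{loc}}^{(q,p)}$, and verifying the resulting atomic combination has the right quasi-norm; everything else runs parallel to the non-local Theorem~\ref{theoremdualqp} of \cite{AbFt3}.
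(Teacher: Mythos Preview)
The paper does not contain a proof of this theorem: it is stated with the citation \cite[Theorem 4.4]{AbFt3} and simply quoted as background from the companion paper. So there is no ``paper's own proof'' here to compare your proposal against.

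That said, your strategy is exactly the natural one and matches how the theorem is positioned in the paper: split according to whether $p>1$ or $p\le 1$. For $p>1$ the statement is literally Theorem~\ref{theoremdualqploc}, as you note. For $0<q\le p\le 1$ you correctly invoke \eqref{dualqploc3bisbis} to identify $\mathcal{L}_{r',\phi_1,\delta}^{(q,p,\eta)\mathrm{loc}}$ with the local Campanato space $\mathcal{L}_{r',\phi_1,\delta}^{\mathrm{loc}}$, after which the duality $\left(\mathcal{H}_{\mathrm{loc}}^{(q,p)}\right)^{\ast}\cong\mathcal{L}_{r',\phi_1,\delta}^{\mathrm{loc}}$ is the result already established in \cite{AbFt2}. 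The paper itself signals this dichotomy in the sentence immediately following the theorem (``when $p\le 1$, we can take $0<\eta\le 1$ for $1<r\le\infty$''), which is precisely the Campanato-reduction regime. Your sketches of parts (1) and (2) --- the single-atom estimate distinguishing $|Q|<1$ from $|Q|\ge 1$, the Hahn--Banach gluing, and the dual-atom test-function construction --- are the standard ingredients and line up with how these duality theorems are proved in \cite{AbFt1,AbFt2,AbFt3}; the ``main obstacle'' you flag (handling the large-cube part without cancellation) is indeed the point where the local theory differs from the non-local one, and you have identified it correctly.
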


In Theorem \ref{theoremdualunifieloc}, when $p\leq1$, we can take $0<\eta\leq1$ for $1<r\leq\infty$.

Also, for $0<q\leq1$, $q\leq p<\infty$, $\delta\geq\left\lfloor d\left(\frac{1}{q}-1\right)\right\rfloor$ and, $1\leq r<p'$ if $1<p$ or $1\leq r<\infty$ otherwise, where $\frac{1}{p}+\frac{1}{p'}=1$, with $0<\eta<q$ if $1<r$ or $0<\eta\leq1$ if $r=1$, we have 
\begin{eqnarray}
\left(\mathcal{H}_{\mathrm{loc}}^{(q,p)}\right)^{\ast}\cong\mathcal{L}_{r,\phi_1,\delta}^{(q,p,\eta)\mathrm{loc}}\hookrightarrow\mathcal{L}_{r,\phi_1,\delta}^{(q,p,\eta)}\cong\left(\mathcal{H}^{(q,p)}\right)^{\ast}, \label{inclusqp}
\end{eqnarray}
thanks to Theorems  \ref{theoremdualunifie} and \ref{theoremdualunifieloc}. Moreover, $\mathcal{L}_{r,\phi_1,\delta}^{(q,p,\eta)\mathrm{loc}}\subsetneq\mathcal{L}_{r,\phi_1,\delta}^{(q,p,\eta)}$, and hence 
\begin{align}
\left(\mathcal{H}_{\mathrm{loc}}^{(q,p)}\right)^{\ast}\subsetneq\left(\mathcal{H}^{(q,p)}\right)^{\ast}. \label{compardeshqp1}
\end{align}

To complete, we give some embedding relations relating to the spaces $\mathcal{L}_{r,\phi_1,\delta}^{(q,p,\eta)\mathrm{loc}}$ and similar to those obtained in the setting of the spaces $\mathcal{L}_{r,\phi_1,\delta}^{(q,p,\eta)}$ (see \cite[(3.17), (3.18), (3.19), Proposition 3.6 and (3.36)]{AbFt3}). Therefore, we leave the details of their proofs to the reader. 

For $0<q\leq1$, $0<p<\infty$, $1\leq r<\infty$ and $0<\eta_1\leq\eta_2<\infty$, we have  
\begin{align}
\left\|g\right\|_{\mathcal{L}_{r,\phi_{1},\delta}^{(q,p,\eta_1)\mathrm{loc}}}\leq\left\|g\right\|_{\mathcal{L}_{r,\phi_{1},\delta}^{(q,p,\eta_2)\mathrm{loc}}}, \label{revisiop07}
\end{align}
for all $g\in\mathcal{L}_{r,\phi_{1},\delta}^{(q,p,\eta_2)\mathrm{loc}}$, and hence $\mathcal{L}_{r,\phi_{1},\delta}^{(q,p,\eta_2)\mathrm{loc}}\hookrightarrow\mathcal{L}_{r,\phi_{1},\delta}^{(q,p,\eta_1)\mathrm{loc}}\hookrightarrow\mathcal{L}_{r,\phi_{1},\delta}^{\mathrm{loc}}$.

We have inequalities similar to (\ref{revisiop07}) with the exponents $q$ and $p$. More precisely, for  $0<q\leq q_1\leq1$, $0<p\leq p_1<\infty$, $1\leq r<\infty$ and $0<\eta<\infty$, with the functions $\phi_{1}(Q):=\frac{\left\|\chi_{_{Q}}\right\|_{q,p}}{|Q|}$, $\psi_{1}(Q):=\frac{\left\|\chi_{_{Q}}\right\|_{q_1,p}}{|Q|}$ and $\varphi_{1}(Q):=\frac{\left\|\chi_{_{Q}}\right\|_{q,p_1}}{|Q|}$ for all $Q\in\mathcal{Q}$, we have  
\begin{align}
\left\|g\right\|_{\mathcal{L}_{r,\psi_{1},\delta}^{(q_1,p,\eta)\mathrm{loc}}}\leq\left\|g\right\|_{\mathcal{L}_{r,\phi_{1},\delta}^{(q,p,\eta)\mathrm{loc}}}, \label{revisiop008}
\end{align}
for all $g\in\mathcal{L}_{r,\phi_{1},\delta}^{(q,p,\eta)\mathrm{loc}}$, and hence $\mathcal{L}_{r,\phi_{1},\delta}^{(q,p,\eta)\mathrm{loc}}\hookrightarrow\mathcal{L}_{r,\psi_{1},\delta}^{(q_1,p,\eta)\mathrm{loc}}$; and 
\begin{align}
\left\|g\right\|_{\mathcal{L}_{r,\phi_{1},\delta}^{(q,p,\eta)\mathrm{loc}}}\leq\left\|g\right\|_{\mathcal{L}_{r,\varphi_{1},\delta}^{(q,p_1,\eta)\mathrm{loc}}}, \label{revisiop009}
\end{align}
for all $g\in\mathcal{L}_{r,\varphi_{1},\delta}^{(q,p_1,\eta)\mathrm{loc}}$, and hence $\mathcal{L}_{r,\varphi_{1},\delta}^{(q,p_1,\eta)\mathrm{loc}}\hookrightarrow\mathcal{L}_{r,\phi_{1},\delta}^{(q,p,\eta)\mathrm{loc}}$. Notice that an inequality similar to (\ref{revisiop07}) holds also for the exponent $r$.\\

We can also define on $\mathcal{L}_{r,\phi_{1},\delta}^{(q,p,\eta)\mathrm{loc}}$ another norm $|||\cdot|||_{\mathcal{L}_{r,\phi_{1},\delta}^{(q,p,\eta)\mathrm{loc}}}$ equivalent to $\left\|\cdot\right\|_{\mathcal{L}_{r,\phi_{1},\delta}^{(q,p,\eta)\mathrm{loc}}}$, with $0<q\leq1$, $0<p<\infty$, $0<\eta<\infty$ and $1\leq r<\infty$. Indeed, given a function $g$ in $L_{\mathrm{loc}}^r$ and an open subset $\Omega$ such that $\Omega\neq\mathbb{R}^d$, we put 
\begin{align*}\widetilde{\textit{O}(g,\Omega,r,\delta)}^{\mathrm{loc}}&:=\sup\left[\sum_{|\widetilde{Q^n}|<1}\inf_{\mathfrak{p}\in\mathcal{P_{\delta}}}|\widetilde{Q^n}|^{\frac{1}{r'}}\left(\int_{\widetilde{Q^n}}\left|g(x)-\mathfrak{p}(x)\right|^{r}dx\right)^{\frac{1}{r}}\right. \\
 &+\left. \sum_{|\widetilde{Q^n}|\geq1}|\widetilde{Q^n}|^{\frac{1}{r'}}\left(\int_{\widetilde{Q^n}}|g(x)|^{r}dx\right)^{\frac{1}{r}}\right],
\end{align*} 
where $\frac{1}{r}+\frac{1}{r'}=1$ and the supremum is taken over all families of cubes $\left\{Q^n\right\}_{n\geq0}$ such that $Q^n\subset\Omega$, for all $n\geq0$ and $\sum_{n\geq0}\chi_{_{Q^n}}\leq K(d)$, with $\widetilde{Q^n}=C_0Q^n$, where $K(d)>1$ and $C_0>1$ are the same constants as in the definition of $\textit{O}(g,\Omega,r)$. We have the following proposition.  

\begin{prop}\label{dualqpequival0}
Suppose that $0<q\leq1$ and $0<p<\infty$. Let $0<\eta<\infty$ and $1\leq r<\infty$. Let $g$ be a function in $L_{\mathrm{loc}}^r$. Then $g\in\mathcal{L}_{r,\phi_{1},\delta}^{(q,p,\eta)\mathrm{loc}}$ if and only if there is a constant $C>0$ such that, for all families of open subsets $\left\{\Omega^j\right\}_{j\in\mathbb{Z}}$ with $\left\|\sum_{j\in\mathbb{Z}}2^{j\eta}\chi_{\Omega^j}\right\|_{\frac{q}{\eta},\frac{p}{\eta}}<\infty$, we have  
\begin{align}
\sum_{j\in\mathbb{Z}}2^j\widetilde{\textit{O}(g,\Omega^j,r,\delta)}^{\mathrm{loc}}\leq C\left\|\sum_{j\in\mathbb{Z}}2^{j\eta}\chi_{\Omega^j}\right\|_{\frac{q}{\eta},\frac{p}{\eta}}^{\frac{1}{\eta}}. \label{dualqpequival}
\end{align}  
Moreover, if we define $|||g|||_{\mathcal{L}_{r,\phi_{1},\delta}^{(q,p,\eta)\mathrm{loc}}}:=\inf\left\{C>0:\ C \text{ satisfies } (\ref{dualqpequival})\right\}$, then   
\begin{align}
|||g|||_{\mathcal{L}_{r,\phi_{1},\delta}^{(q,p,\eta)\mathrm{loc}}}\approx\left\|g\right\|_{\mathcal{L}_{r,\phi_{1},\delta}^{(q,p,\eta)\mathrm{loc}}} \label{dualqpequival1}
\end{align} 
and $|||\cdot|||_{\mathcal{L}_{r,\phi_{1},\delta}^{(q,p,\eta)\mathrm{loc}}}$ is a norm on $\mathcal{L}_{r,\phi_1,\delta}^{(q,p,\eta)\mathrm{loc}}$.
\end{prop}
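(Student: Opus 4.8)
The plan is to reduce the whole statement to a single cube-by-cube comparison and then propagate it through the two layers of summation (first over the cubes of an admissible family, then over the dyadic levels $j$). The only genuinely substantive ingredient is the following uniform polynomial-approximation estimate: there is a constant $C=C(d,r,\delta)>0$ such that, for every cube $Q\subset\mathbb{R}^d$ and every $h\in L^r(Q)$,
\[
\left\|h-P_{Q}^{\delta}(h)\right\|_{L^r(Q)}\leq C\,\inf_{\mathfrak{p}\in\mathcal{P}_{\delta}}\left\|h-\mathfrak{p}\right\|_{L^r(Q)},
\]
the reverse inequality holding trivially with constant $1$ since $P_Q^{\delta}(h)\in\mathcal{P}_{\delta}$. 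To prove this I would fix $\mathfrak{p}\in\mathcal{P}_{\delta}$, use the linearity of $P_Q^{\delta}$ and $P_Q^{\delta}\mathfrak{p}=\mathfrak{p}$ to write $h-P_Q^{\delta}(h)=(h-\mathfrak{p})-P_Q^{\delta}(h-\mathfrak{p})$, and thereby reduce the claim to the boundedness of $P_Q^{\delta}$ on $L^r(Q)$ with operator norm independent of $Q$. Translating and dilating $Q$ onto the unit cube $Q_0$ — an affine change of variables under which $\mathcal{P}_{\delta}$ and the orthogonality relation defining $P_Q^{\delta}$ are preserved and the $L^r$-norm merely rescales by the constant Jacobian — reduces this to the boundedness of $P_{Q_0}^{\delta}$ on $L^r(Q_0)$. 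The latter holds because the $L^2(Q_0)$-projection onto the finite-dimensional space $\mathcal{P}_{\delta}$ has a bounded reproducing kernel, so $\|P_{Q_0}^{\delta}h\|_{L^\infty(Q_0)}\lesssim\|h\|_{L^1(Q_0)}\lesssim\|h\|_{L^r(Q_0)}$, and on $\mathcal{P}_{\delta}$ all $L^s(Q_0)$-norms are mutually equivalent.

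Granting this, the next step is to compare $\textit{O}(g,\Omega,r)^{\mathrm{loc}}$ with $\widetilde{\textit{O}(g,\Omega,r,\delta)}^{\mathrm{loc}}$ for a fixed open set $\Omega\neq\mathbb{R}^d$. For any admissible family $\{Q^n\}_{n\geq0}$ I would apply the estimate above with $Q=\widetilde{Q^n}$, $h=g$ to each term with $|\widetilde{Q^n}|<1$, leave the terms with $|\widetilde{Q^n}|\geq1$ unchanged (they are literally identical in the two definitions), multiply by $|\widetilde{Q^n}|^{1/r'}$ and sum over $n$. Since the per-cube constant $C$ is independent of the cube and of the family, taking the supremum over all admissible families gives
\[
\widetilde{\textit{O}(g,\Omega,r,\delta)}^{\mathrm{loc}}\leq\textit{O}(g,\Omega,r)^{\mathrm{loc}}\leq C\,\widetilde{\textit{O}(g,\Omega,r,\delta)}^{\mathrm{loc}}
\]
for every such $\Omega$. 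Then, for any family $\{\Omega^j\}_{j\in\mathbb{Z}}$ with $\|\sum_{j}2^{j\eta}\chi_{\Omega^j}\|_{q/\eta,p/\eta}<\infty$, multiplying by $2^j$ and summing over $j$ shows that $g$ satisfies \eqref{dualqploc} for some constant if and only if it satisfies \eqref{dualqpequival} for some constant; hence $g\in\mathcal{L}_{r,\phi_{1},\delta}^{(q,p,\eta)\mathrm{loc}}$ is equivalent to \eqref{dualqpequival}, and comparing the infima of admissible constants yields the norm equivalence \eqref{dualqpequival1}.

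Finally, that $|||\cdot|||_{\mathcal{L}_{r,\phi_{1},\delta}^{(q,p,\eta)\mathrm{loc}}}$ is a norm: homogeneity is immediate because $\widetilde{\textit{O}(\lambda g,\Omega,r,\delta)}^{\mathrm{loc}}=|\lambda|\,\widetilde{\textit{O}(g,\Omega,r,\delta)}^{\mathrm{loc}}$; subadditivity follows from the triangle inequality in $L^r(\widetilde{Q^n})$ (valid since $r\geq1$) — for the small cubes one adds near-optimal polynomials for $g_1$ and $g_2$, for the large cubes one uses $|g_1+g_2|\leq|g_1|+|g_2|$ — so $\widetilde{\textit{O}}^{\mathrm{loc}}$, hence $|||\cdot|||$, is subadditive; and positive-definiteness is inherited from the already-established fact that $\|\cdot\|_{\mathcal{L}_{r,\phi_{1},\delta}^{(q,p,\eta)\mathrm{loc}}}$ is a norm, via \eqref{dualqpequival1}. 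The main obstacle is the uniform-in-$Q$ polynomial-approximation estimate of the first paragraph; once it is secured, the passage to the stated equivalence is pure bookkeeping and checking the norm axioms is routine. Care is needed only to make sure the constant there does not secretly depend on $Q$, which is precisely why one normalizes to the unit cube.
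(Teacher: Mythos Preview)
Your proof is correct and is precisely the natural argument. The paper itself does not spell out a proof of this proposition: it states the result among several embedding relations ``similar to those obtained in the setting of the spaces $\mathcal{L}_{r,\phi_1,\delta}^{(q,p,\eta)}$ (see \cite[(3.17), (3.18), (3.19), Proposition~3.6 and (3.36)]{AbFt3})'' and explicitly leaves the details to the reader. Your approach---reducing everything to the uniform cube-independent equivalence $\|g-P_Q^{\delta}g\|_{L^r(Q)}\approx\inf_{\mathfrak{p}\in\mathcal{P}_{\delta}}\|g-\mathfrak{p}\|_{L^r(Q)}$, established by normalizing to the unit cube and using the $L^r$-boundedness of the finite-rank projection there---is exactly the standard mechanism behind such results and is what the referenced non-local analogue \cite[Proposition~3.6]{AbFt3} relies on as well. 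Once that estimate is in hand, the term-by-term, sum-by-sum, and sup-by-sup comparisons you describe are routine, and your verification of the norm axioms is clean.
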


As a consequence of Proposition \ref{dualqpequival0}, for $0\leq\delta_1\leq\delta_2$ being two integers, $0<q\leq1$, $0<p<\infty$, $0<\eta<\infty$ and $1\leq r<\infty$, we have    
\begin{align}
\left\|g\right\|_{\mathcal{L}_{r,\phi_{1},\delta_2}^{(q,p,\eta)\mathrm{loc}}}\lsim\left\|g\right\|_{\mathcal{L}_{r,\phi_{1},\delta_1}^{(q,p,\eta)\mathrm{loc}}}, \label{revisiop08}
\end{align}
for all $g\in\mathcal{L}_{r,\phi_{1},\delta_1}^{(q,p,\eta)\mathrm{loc}}$. Hence $\mathcal{L}_{r,\phi_{1},\delta_1}^{(q,p,\eta)\mathrm{loc}}\hookrightarrow\mathcal{L}_{r,\phi_{1},\delta_2}^{(q,p,\eta)\mathrm{loc}}\hookrightarrow\mathcal{L}_{r,\phi_{1},\delta_2}^{\mathrm{loc}}$.

\section{On the inclusion of $\mathcal{H}^{(1,p)}$ in $(L^1,\ell^p)$ and of $\mathcal{H}^{(q,p)}$ in $\mathcal{H}_{\mathrm{loc}}^{(q,p)}$}

\subsection{On the inclusion of $\mathcal{H}^{(1,p)}$ in $(L^1,\ell^p)$}

We prove that the inclusion of $\mathcal{H}^{(1,p)}$ in $(L^1,\ell^p)$, for $1\leq p<\infty$, is strict; generalizing the one of $\mathcal{H}^1$ in $L^1$. For $p=1$, we have $\mathcal{H}^{(1,1)}=\mathcal{H}^1$ and $(L^1,\ell^1)=L^1$, and it is well known that $\mathcal{H}^1\subsetneq L^1$. We so assume that $1<p<\infty$. Our argument is similar to the one of the proof of \cite[Theorem 3.6 (v), p. 26]{YZDYWY}.  

We know that $\mathcal{H}^{(1,p)}\subset(L^1,\ell^p)$ with
\begin{align}
\left\|f\right\|_{1,p}\leq\left\|f\right\|_{\mathcal{H}^{(1,p)}}, \label{Bairetheori2}
\end{align}
for all $f\in\mathcal{H}^{(1,p)}$, by \cite[Theorem 3.2, p. 1905]{AbFt}. Moreover, $(\mathcal{H}^{(1,p)},\left\|\cdot\right\|_{\mathcal{H}^{(1,p)}})$ and $((L^1,\ell^p),\left\|\cdot\right\|_{1,p})$ are Banach spaces. 

Suppose now that $\mathcal{H}^{(1,p)}=(L^1,\ell^p)$ as sets. Then $(\mathcal{H}^{(1,p)},\left\|\cdot\right\|_{1,p})$ is Banach space since $((L^1,\ell^p),\left\|\cdot\right\|_{1,p})$ is so. Thus, (\ref{Bairetheori2}) and the fact that $(\mathcal{H}^{(1,p)},\left\|\cdot\right\|_{\mathcal{H}^{(1,p)}})$ and $(\mathcal{H}^{(1,p)},\left\|\cdot\right\|_{1,p})$ are Banach spaces, imply that 
\begin{eqnarray}
\left\|\cdot\right\|_{\mathcal{H}^{(1,p)}}\approx\left\|\cdot\right\|_{1,p} \label{Bairetheori3}
\end{eqnarray}
on $\mathcal{H}^{(1,p)}$; in other words, there exists a constant $C>0$ such that  
\begin{eqnarray}
\left\|f\right\|_{1,p}\leq\left\|f\right\|_{\mathcal{H}^{(1,p)}}\leq C\left\|f\right\|_{1,p}, \label{Bairetheori4} 
\end{eqnarray}
for all $f\in\mathcal{H}^{(1,p)}$, by \cite[Corollary 2.12 (d), pp. 49-50]{WR1} or \cite[Corollary 2.8, p. 35]{HBZ1} or yet \cite[Remarque 5, p. 19]{HBZ}. From (\ref{Bairetheori3}), it comes that $(\mathcal{H}^{(1,p)})^{\ast}=(L^1,\ell^p)^{\ast}$; namely  $(L^{\infty},\ell^{p'})\cong\mathcal{L}_{1,\phi_1,\delta}^{(1,p,\eta_1)}\cong\mathcal{L}_{r,\phi_1,\delta}^{(1,p,\eta)}$ (see Remark \ref{remarqedualeqp1}). Indeed, for $T\in(\mathcal{H}^{(1,p)})^{\ast}$ and for all $f\in(L^1,\ell^p)$, we have 
\begin{eqnarray*}
\left|T(f)\right|\leq C\left\|f\right\|_{\mathcal{H}^{(1,p)}}\leq C\left\|f\right\|_{1,p},
\end{eqnarray*}
by (\ref{Bairetheori4}), since $(L^1,\ell^p)\subset\mathcal{H}^{(1,p)}$ by assumption; which implies that $T$ is a continuous linear functional on $(L^1,\ell^p)$; in other words $(\mathcal{H}^{(1,p)})^{\ast}\subset(L^1,\ell^p)^{\ast}$. But this is opposite to the fact that $(L^1,\ell^p)^{\ast}\subsetneq(\mathcal{H}^{(1,p)})^{\ast}$; namely $(L^{\infty},\ell^{p'})\subsetneq\mathcal{L}_{1,\phi_1,\delta}^{(1,p,\eta_1)}\cong\mathcal{L}_{r,\phi_1,\delta}^{(1,p,\eta)}$ (see Remark \ref{remarqedualeqp1}). Therefore, $\mathcal{H}^{(1,p)}\neq(L^1,\ell^p)$, and hence $\mathcal{H}^{(1,p)}\subsetneq(L^1,\ell^p)$. 

\begin{remark}
Notice that it is possible to deal with the cases $p=1$ and $1<p<\infty$ simultaneously by this method, since $(\mathcal{H}^{(1,p)})^{\ast}\cong\mathcal{L}_{1,\phi_1,\delta}^{(1,p,\eta_1)}\cong\mathcal{L}_{r,\phi_1,\delta}^{(1,p,\eta)}$ and $(L^{\infty},\ell^{p'})\subsetneq\mathcal{L}_{1,\phi_1,\delta}^{(1,p,\eta_1)}\cong\mathcal{L}_{r,\phi_1,\delta}^{(1,p,\eta)}$, for $1\leq p<\infty$, under the assumptions of Remark \ref{remarqedualeqp1}, by Theorem \ref{theoremdualunifie} (see \cite[Theorem 3.8]{AbFt3}).
\end{remark}

\subsection{On the inclusion of $\mathcal{H}^{(q,p)}$ in $\mathcal{H}_{\mathrm{loc}}^{(q,p)}$}

We assume that $0<q\leq1$ and $q\leq p<\infty$. The reasoning is similar to the one of the strict inclusion of $\mathcal{H}^{(1,p)}$ in $(L^1,\ell^p)$. We know that $\mathcal{H}^{(q,p)}\subset\mathcal{H}_{\mathrm{loc}}^{(q,p)}$ with
\begin{align}
\left\|f\right\|_{\mathcal{H}_{\mathrm{loc}}^{(q,p)}}\leq\left\|f\right\|_{\mathcal{H}^{(q,p)}}, \label{Bairetheor1} 
\end{align}
for all $f\in\mathcal{H}^{(q,p)}$, by definition. Also, $(\mathcal{H}^{(q,p)},\left\|\cdot\right\|_{\mathcal{H}^{(q,p)}})$ and $(\mathcal{H}_{\mathrm{loc}}^{(q,p)},\left\|\cdot\right\|_{\mathcal{H}_{\mathrm{loc}}^{(q,p)}})$ are quasi-Banach spaces, and hence Fr\'echet spaces (F-spaces) (for F-space, see \cite[Definition 1, p. 52]{KYos}). 

Suppose that $\mathcal{H}^{(q,p)}=\mathcal{H}_{\mathrm{loc}}^{(q,p)}$ as sets. Then $(\mathcal{H}^{(q,p)},\left\|\cdot\right\|_{\mathcal{H}_{\mathrm{loc}}^{(q,p)}})$ is a Fr\'echet space because $(\mathcal{H}_{\mathrm{loc}}^{(q,p)},\left\|\cdot\right\|_{\mathcal{H}_{\mathrm{loc}}^{(q,p)}})$ is so. Thus, (\ref{Bairetheor1}) and the fact that $(\mathcal{H}^{(q,p)},\left\|\cdot\right\|_{\mathcal{H}^{(q,p)}})$ and $(\mathcal{H}^{(q,p)},\left\|\cdot\right\|_{\mathcal{H}_{\mathrm{loc}}^{(q,p)}})$ are Fr\'echet spaces, imply that 
\begin{align}
\left\|\cdot\right\|_{\mathcal{H}_{\mathrm{loc}}^{(q,p)}}\approx\left\|\cdot\right\|_{\mathcal{H}^{(q,p)}} \label{Bairetheor2} 
\end{align}
on $\mathcal{H}^{(q,p)}$; in other words, there exists a constant $C>0$ sucht that 
\begin{align}
\left\|f\right\|_{\mathcal{H}_{\mathrm{loc}}^{(q,p)}}\leq\left\|f\right\|_{\mathcal{H}^{(q,p)}}\leq C\left\|f\right\|_{\mathcal{H}_{\mathrm{loc}}^{(q,p)}}, \label{Bairetheor3} 
\end{align}
for all $f\in\mathcal{H}^{(q,p)}$, by \cite[Corollary 2.12 (d), pp. 49-50]{WR1}. From (\ref{Bairetheor2}), it follows that $(\mathcal{H}^{(q,p)})^{\ast}=(\mathcal{H}_{\mathrm{loc}}^{(q,p)})^{\ast}$; namely $\mathcal{L}_{r,\phi_1,\delta}^{(q,p,\eta)}\cong\mathcal{L}_{r,\phi_1,\delta}^{(q,p,\eta)\mathrm{loc}}$ (see (\ref{inclusqp})). In fact, for $T\in(\mathcal{H}^{(q,p)})^{\ast}$ and for all $f\in\mathcal{H}_{\mathrm{loc}}^{(q,p)}$, we have 
\begin{eqnarray*}
\left|T(f)\right|\leq C\left\|f\right\|_{\mathcal{H}^{(q,p)}}\leq C\left\|f\right\|_{\mathcal{H}_{\mathrm{loc}}^{(q,p)}},
\end{eqnarray*}
by (\ref{Bairetheor3}), since $\mathcal{H}_{\mathrm{loc}}^{(q,p)}\subset\mathcal{H}^{(q,p)}$ by assumption; which implies that $T$ is a continuous linear functional on $\mathcal{H}_{\mathrm{loc}}^{(q,p)}$; in other words $(\mathcal{H}^{(q,p)})^{\ast}\subset(\mathcal{H}_{\mathrm{loc}}^{(q,p)})^{\ast}$. But this contradicts the fact that $(\mathcal{H}_{\mathrm{loc}}^{(q,p)})^{\ast}\subsetneq(\mathcal{H}^{(q,p)})^{\ast}$; namely $\mathcal{L}_{r,\phi_1,\delta}^{(q,p,\eta)\mathrm{loc}}\subsetneq\mathcal{L}_{r,\phi_1,\delta}^{(q,p,\eta)}$ (see (\ref{compardeshqp1})). Consequently, $\mathcal{H}^{(q,p)}\neq\mathcal{H}_{\mathrm{loc}}^{(q,p)}$, and hence $\mathcal{H}^{(q,p)}\subsetneq\mathcal{H}_{\mathrm{loc}}^{(q,p)}$.

\begin{remark}
We point out that $\mathcal{H}^{(1,p)}\subsetneq(L^1,\ell^p)$, for $1\leq p<\infty$, can be deduced from $\mathcal{H}^{(q,p)}\subsetneq\mathcal{H}_{\mathrm{loc}}^{(q,p)}$. Indeed, we have $\mathcal{H}^{(1,p)}\subsetneq\mathcal{H}_{\mathrm{loc}}^{(1,p)}\subset(L^1,\ell^p)$, by \cite[Theorem 3.2, p. 1905]{AbFt}, and hence $\mathcal{H}^{(1,p)}\subsetneq(L^1,\ell^p)$.

Also, although $\mathcal{H}^{(q,p)}\subset\mathcal{H}_{\mathrm{loc}}^{(q,p)}$ with $\left\|f\right\|_{\mathcal{H}_{\mathrm{loc}}^{(q,p)}}\leq\left\|f\right\|_{\mathcal{H}^{(q,p)}}$, for all $f\in\mathcal{H}^{(q,p)}$, and $(\mathcal{H}^{(q,p)},\left\|\cdot\right\|_{\mathcal{H}^{(q,p)}})$ and $(\mathcal{H}_{\mathrm{loc}}^{(q,p)},\left\|\cdot\right\|_{\mathcal{H}_{\mathrm{loc}}^{(q,p)}})$ are Fr\'echet spaces for $0<q\leq1$ and $0<p<q\leq1$, it is not yet clear that $\mathcal{H}^{(q,p)}\subsetneq\mathcal{H}_{\mathrm{loc}}^{(q,p)}$ for this case.

On the other hand, contrary to $\mathcal{H}^{(1,p)}\subsetneq(L^1,\ell^p)$, it is not clear that $\mathcal{H}_{\mathrm{loc}}^{(1,p)}\subsetneq(L^1,\ell^p)$. In fact, although $(L^{\infty},\ell^{p'})\subset\mathcal{L}_{1,\phi_1,\delta}^{(1,p,\eta_1)\mathrm{loc}}\cong\mathcal{L}_{r,\phi_1,\delta}^{(1,p,\eta)\mathrm{loc}}$, by Remark \ref{remarqedualeqploc1}, it is not clear that this inclusion is strict. The difficulty is that, contrary to the proof of $(L^{\infty},\ell^{p'})\subsetneq\mathcal{L}_{1,\phi_1,\delta}^{(1,p,\eta_1)}\cong\mathcal{L}_{r,\phi_1,\delta}^{(1,p,\eta)}$, we have $(L^{\infty},\ell^{p'})\subset L^{\infty}$, $\mathcal{P}_{\delta}\cap L^{\infty}=\mathbb{C}=\mathcal{P}_{\delta}\cap\mathcal{L}_{1,\phi_1,\delta}^{(1,1,\eta_1)\mathrm{loc}}=\mathcal{P}_{\delta}\cap\mathcal{L}_{r,\phi_1,\delta}^{(1,1,\eta)\mathrm{loc}}$ and $\mathcal{P}_{\delta}\cap\mathcal{L}_{1,\phi_1,\delta}^{(1,p,\eta_1)\mathrm{loc}}=\mathcal{P}_{\delta}\cap\mathcal{L}_{r,\phi_1,\delta}^{(1,p,\eta)\mathrm{loc}}=\left\{0\right\}$ for $1<p<\infty$. Hence we can not conclude that $(L^{\infty},\ell^{p'})\subsetneq\mathcal{L}_{1,\phi_1,\delta}^{(1,p,\eta_1)\mathrm{loc}}\cong\mathcal{L}_{r,\phi_1,\delta}^{(1,p,\eta)\mathrm{loc}}$ for $1\leq p<\infty$. 

More generally, contrary to $\mathcal{P}_{\delta}\subset\mathcal{L}_{r,\phi_1,\delta}^{(q,p,\eta)}$ for $0<q\leq 1$, $0<p<\infty$, $1\leq r<\infty$, $0<\eta<\infty$ and $\delta\geq0$, we have for $\delta\geq\max\left\{\left\lfloor d\left(\frac{1}{q}-1\right)\right\rfloor,\left\lfloor d\left(\frac{1}{p}-1\right)\right\rfloor\right\}$,
\begin{align}
\mathcal{L}_{r,\phi_1,\delta}^{(q,p,\eta)\mathrm{loc}}\cap\mathcal{P}_{\delta}=\mathbb{C}, \label{Bairetheor5}
\end{align}
for $0<q,p\leq 1$ with $q\neq1$ or $p\neq1$, and
\begin{align}
\mathcal{L}_{r,\phi_1,\delta}^{(q,p,\eta)\mathrm{loc}}\cap\mathcal{P}_{\delta}=\left\{0\right\}, \label{Bairetheor6}
\end{align}
for $0<q\leq 1$ and $1<p<\infty$, with $1\leq r<p'$ if $1<p$ or $1\leq r<\infty$ if $p\leq1$, and $0<\eta<q$ if $1<r$ or $0<\eta\leq1$ if $r=1$. When $q=p=1$, we have for $\delta\geq0$,   
\begin{align}
\mathcal{L}_{r,\phi_1,\delta}^{(1,1,\eta)\mathrm{loc}}\cap\mathcal{P}_{\delta}=\mathbb{C}, \label{Bairetheor7}
\end{align} 
and hence $bmo(\mathbb{R}^d)\cap\mathcal{P}_{\delta}=\mathbb{C}$. 
\end{remark}

Let us give the proofs of (\ref{Bairetheor5}), (\ref{Bairetheor6}) and (\ref{Bairetheor7}). We shall need the following well-known estimates whose we shall give a proof for the reader's convenience. For $0<q,p<\infty$, we have 
\begin{align}
\left\|\chi_{Q}\right\|_{q,p}\approx\left\|\chi_{Q}\right\|_{p}, \label{Bairetheor8}
\end{align}
for all cubes $Q$ such that $|Q|\geq1$, and 
\begin{align}
\left\|\chi_{Q}\right\|_{q,p}\approx\left\|\chi_{Q}\right\|_{q}, \label{Bairetheor9}
\end{align}
for all cubes $Q$ such that $|Q|\leq1$. The proofs of (\ref{Bairetheor8}) and (\ref{Bairetheor9}) will be given after. 

Proof of (\ref{Bairetheor5}). Let $c\in\mathbb{C}$. Consider the constant function $g=c$. We have $g\in\mathcal{P}_{\delta}$ and, according to (\ref{Bairetheor8}),    
\begin{align*}
\left\|g\right\|_{\mathcal{L}_{r,\phi_1,\delta}^{\mathrm{loc}}}&=\sup_{\underset{|Q|\geq1}{Q\in\mathcal{Q}}}\frac{|Q|}{\left\|\chi_{Q}\right\|_{q,p}}\left(\frac{1}{|Q|}\int_{Q}|g(x)|^rdx\right)^{\frac{1}{r}}\\
&\approx\sup_{\underset{|Q|\geq1}{Q\in\mathcal{Q}}}\frac{|Q|}{\left\|\chi_{Q}\right\|_{p}}\left(\frac{1}{|Q|}\int_{Q}|g(x)|^rdx\right)^{\frac{1}{r}}\\
&=\sup_{\underset{|Q|\geq1}{Q\in\mathcal{Q}}}|Q|^{1-\frac{1}{p}}\left(\frac{1}{|Q|}\int_{Q}|g(x)|^rdx\right)^{\frac{1}{r}}=|c|\sup_{\underset{|Q|\geq1}{Q\in\mathcal{Q}}}|Q|^{1-\frac{1}{p}}=|c|<\infty,
\end{align*}
because $1-\frac{1}{p}\leq0$ implies that $0<|Q|^{1-\frac{1}{p}}\leq1$ for all cubes $Q$ such that $|Q|\geq1$. Hence $g=c\in\mathcal{L}_{r,\phi_1,\delta}^{\mathrm{loc}}\cong\mathcal{L}_{r,\phi_1,\delta}^{(q,p,\eta)\mathrm{loc}}$, by (\ref{dualqploc3bisbis}). Therefore, $\mathbb{C}\subset\mathcal{L}_{r,\phi_1,\delta}^{(q,p,\eta)\mathrm{loc}}\cap\mathcal{P}_{\delta}$.   

For the converse inclusion; namely $\mathcal{L}_{r,\phi_1,\delta}^{(q,p,\eta)\mathrm{loc}}\cap\mathcal{P}_{\delta}\subset\mathbb{C}$, 
we have $$\mathcal{L}_{r,\phi_1,\delta}^{(q,p,\eta)\mathrm{loc}}\cap\mathcal{P}_{\delta}\subset\mathcal{L}_{r,\phi_2,\delta}^{\mathrm{loc}}\cap\mathcal{P}_{\delta}=\Lambda_{d\left(\frac{1}{q}-1\right)}\cap\mathcal{P}_{\delta}\subset L^{\infty}\cap\mathcal{P}_{\delta}=\mathbb{C},$$ when $0<q\leq p\leq 1$ and $q\neq1$, because $\mathcal{L}_{r,\phi_1,\delta}^{(q,p,\eta)\mathrm{loc}}\subset\mathcal{L}_{r,\phi_2,\delta}^{\mathrm{loc}}=\Lambda_{d\left(\frac{1}{q}-1\right)}\subset L^{\infty}$, by (\ref{dualqploc3bbi}), where $\Lambda_{d\left(\frac{1}{q}-1\right)}$ is the dual space of $\mathcal{H}_{\mathrm{loc}}^q$ defined by D. Goldberg \cite{DGG}. On the other hand, we have $$\mathcal{L}_{r,\phi_1,\delta}^{(q,p,\eta)\mathrm{loc}}\cap\mathcal{P}_{\delta}=\mathcal{L}_{r,\phi_3,\delta}^{\mathrm{loc}}\cap\mathcal{P}_{\delta}=\Lambda_{d\left(\frac{1}{p}-1\right)}\cap\mathcal{P}_{\delta}\subset L^{\infty}\cap\mathcal{P}_{\delta}=\mathbb{C},$$ when $0<p,q\leq1$ and $p\neq1$, because for all $g\in\mathcal{L}_{r,\phi_1,\delta}^{(q,p,\eta)\mathrm{loc}}\cap\mathcal{P}_{\delta}$ (or $g\in\mathcal{L}_{r,\phi_3,\delta}^{\mathrm{loc}}\cap\mathcal{P}_{\delta}$),  
\begin{align*}
\left\|g\right\|_{\mathcal{L}_{r,\phi_1,\delta}^{(q,p,\eta)\mathrm{loc}}}\approx\left\|g\right\|_{\mathcal{L}_{r,\phi_1,\delta}^{\mathrm{loc}}}&=\sup_{\underset{|Q|\geq1}{Q\in\mathcal{Q}}}\frac{|Q|}{\left\|\chi_{Q}\right\|_{q,p}}\left(\frac{1}{|Q|}\int_{Q}|g(x)|^rdx\right)^{\frac{1}{r}}\\
&\approx\sup_{\underset{|Q|\geq1}{Q\in\mathcal{Q}}}\frac{|Q|}{\left\|\chi_{Q}\right\|_{p}}\left(\frac{1}{|Q|}\int_{Q}|g(x)|^rdx\right)^{\frac{1}{r}}=\left\|g\right\|_{\mathcal{L}_{r,\phi_3,\delta}^{\mathrm{loc}}},
\end{align*}
and $\mathcal{L}_{r,\phi_3,\delta}^{\mathrm{loc}}=\Lambda_{d\left(\frac{1}{p}-1\right)}\subset L^{\infty}$ (since $\delta\geq\left\lfloor d\left(\frac{1}{p}-1\right)\right\rfloor$). This establishes (\ref{Bairetheor5}).   

For (\ref{Bairetheor6}), it is clear that $\left\{0\right\}\subset\mathcal{L}_{r,\phi_1,\delta}^{(q,p,\eta)\mathrm{loc}}\cap\mathcal{P}_{\delta}$. Conversely, let $g\in\mathcal{L}_{r,\phi_1,\delta}^{(q,p,\eta)\mathrm{loc}}\cap\mathcal{P}_{\delta}$. We have $g\in L^{p'}\cap\mathcal{P}_{\delta}$, by Remark \ref{remarqedualeqploc1}. Hence necessarily $g$ is a null polynomial, because all non-null polynomials do not belong to $L^{p'}$, $1<p'<\infty$. Thus, $\mathcal{L}_{r,\phi_1,\delta}^{(q,p,\eta)\mathrm{loc}}\cap\mathcal{P}_{\delta}\subset\left\{0\right\}$, which proves (\ref{Bairetheor6}). 

For the proof of (\ref{Bairetheor7}), let $0<p<1$. If $\delta\geq\left\lfloor d\left(\frac{1}{p}-1\right)\right\rfloor$, then we have $\mathcal{L}_{r,\phi_1,\delta}^{(1,1,\eta)\mathrm{loc}}\cap\mathcal{P}_{\delta}\subset\mathcal{L}_{r,\varphi_1,\delta}^{(1,p,\eta)\mathrm{loc}}\cap\mathcal{P}_{\delta}\subset\mathbb{C}$, by (\ref{revisiop009}) and (\ref{Bairetheor5}). If $\delta\leq\left\lfloor d\left(\frac{1}{p}-1\right)\right\rfloor$, then we have $\mathcal{L}_{r,\phi_1,\delta}^{(1,1,\eta)\mathrm{loc}}\cap\mathcal{P}_{\delta}\subset\mathcal{L}_{r,\varphi_1,\delta}^{(1,p,\eta)\mathrm{loc}}\cap\mathcal{P}_{\delta}\subset\mathcal{L}_{r,\varphi_1,\left\lfloor d\left(\frac{1}{p}-1\right)\right\rfloor}^{(1,p,\eta)\mathrm{loc}}\cap\mathcal{P}_{\left\lfloor d\left(\frac{1}{p}-1\right)\right\rfloor}\subset\mathbb{C}$, by (\ref{revisiop009}), (\ref{revisiop08}) and (\ref{Bairetheor5}). For the converse inclusion; namely $\mathbb{C}\subset\mathcal{L}_{r,\phi_1,\delta}^{(1,1,\eta)\mathrm{loc}}\cap\mathcal{P}_{\delta}$, see the first part of the proof of (\ref{Bairetheor5}), and hence (\ref{Bairetheor7}) is proved.\\

Now we give the proofs of Estimates (\ref{Bairetheor8}) and (\ref{Bairetheor9}). Let $Q$ be a cube and $\ell_Q$ be its side-length. Without loss generality, we can assume that $Q$ is closed. We have  
\begin{equation}
1\leq M_Q:=\sharp{\left\{k\in\mathbb{Z}^d:\ Q\cap Q_k\neq\emptyset\right\}}<(\ell_Q+2)^d\leq\left\{\begin{array}{lll}3^d&\text{ if }&|Q|\leq 1\\ \\
3^d\ell_Q^d&\text{ if }&|Q|\geq1,\end{array}\right. \label{Bairetheor10}
\end{equation}
where $\sharp$ denotes the cardinal. To see (\ref{Bairetheor10}), denote by $x^Q=(x_1^Q,x_2^Q,\ldots,x_d^Q)$ the center of $Q$. Let $k\in\mathbb{Z}^d$ such that $Q\cap Q_k\neq\emptyset$ (a such $k$ exists because $\left\{Q_k\right\}_{k\in\mathbb{Z}^d}$ is a partition of $\mathbb{R}^d$, and for recall $Q_k=k+[0,1)^{d}$). Then there exists $x=(x_1,x_2,\ldots,x_d)\in\mathbb{R}^d$ such that $x\in Q\cap Q_k$, and hence satisfies  
\begin{align}
x_i^Q-\frac{\ell_Q}{2}\leq x_i\leq\frac{\ell_Q}{2}+x_i^Q\ \text{ and }\ k_i\leq x_i<k_i+1, \label{Afterwa}
\end{align}
for every $i=1,2,\ldots,d$, where $k_i$'s are the coordinates of $k$. But (\ref{Afterwa}) implies that  
\begin{align*} 
x_i^Q-\frac{\ell_Q}{2}-1<k_i\leq x_i^Q+\frac{\ell_Q}{2}\ ,
\end{align*}
for every $i=1,2,\ldots,d$. Since $k_i\in\mathbb{Z}$, it follows that 
\begin{align*} 
\left\lfloor x_i^Q-\frac{\ell_Q}{2}-1\right\rfloor+1\leq k_i\leq \left\lfloor x_i^Q+\frac{\ell_Q}{2}\right\rfloor\ ,
\end{align*}
for every $i=1,2,\ldots,d$. Thus, for fixed $i$, denoting by $n(k_i)$ the number of possible values that $k_i$ can take, we have 
\begin{align*} 
n(k_i)&=\left\lfloor x_i^Q+\frac{\ell_Q}{2}\right\rfloor-\left(\left\lfloor x_i^Q-\frac{\ell_Q}{2}-1\right\rfloor+1\right)+1\\
&<x_i^Q+\frac{\ell_Q}{2}-\left(x_i^Q-\frac{\ell_Q}{2}-1\right)+1\\
&=\ell_Q+2. 
\end{align*}
Hence $$1\leq\sharp{\left\{k\in\mathbb{Z}^d:\ Q\cap Q_k\neq\emptyset\right\}}<(\ell_Q+2)^d.$$ Moreover, if $|Q|\leq 1$, then $\ell_Q\leq1$, and hence $(\ell_Q+2)^d\leq 3^d$. If $|Q|\geq1$, then $\ell_Q\geq1$, and hence $(\ell_Q+2)^d\leq(\ell_Q+2\ell_Q)^d=3^d\ell_Q^d$. This establishes (\ref{Bairetheor10}).\\ 
Also, we have 
\begin{align}
\left\|\chi_{Q}\right\|_{q,p}^p=\sum_{k\in\mathbb{Z}^d}|Q\cap Q_k|^{\frac{p}{q}}=\sum_{\underset{Q\cap Q_k\neq\emptyset}{k\in\mathbb{Z}^d}}|Q\cap Q_k|^{\frac{p}{q}}. \label{Afterwa1}
\end{align}

Assume that $\frac{p}{q}\leq1$ (ie $q\geq p$). Then we have 
\begin{align*}
|Q|^{\frac{p}{q}}=\left(\sum_{\underset{Q\cap Q_k\neq\emptyset}{k\in\mathbb{Z}^d}}|Q\cap Q_k|\right)^{\frac{p}{q}}\leq\sum_{\underset{Q\cap Q_k\neq\emptyset}{k\in\mathbb{Z}^d}}|Q\cap Q_k|^{\frac{p}{q}}&\leq M_Q^{-\frac{\frac{p}{q}}{\left(\frac{p}{q}\right)'}}\left(\sum_{\underset{Q\cap Q_k\neq\emptyset}{k\in\mathbb{Z}^d}}|Q\cap Q_k|\right)^{\frac{p}{q}}\\
&=M_Q^{-\frac{\frac{p}{q}}{\left(\frac{p}{q}\right)'}}|Q|^{\frac{p}{q}},
\end{align*}
by \cite[Proposition 2.1, p. 311]{RBHS}, and hence 
\begin{align*}
\left\|\chi_{Q}\right\|_q^p=|Q|^{\frac{p}{q}}\leq\left\|\chi_{Q}\right\|_{q,p}^p\leq M_Q^{-\frac{\frac{p}{q}}{\left(\frac{p}{q}\right)'}}|Q|^{\frac{p}{q}}&=M_Q^{-\frac{\frac{p}{q}}{\left(\frac{p}{q}\right)'}}\left\|\chi_{Q}\right\|_q^p\\
&=M_Q^{1-\frac{p}{q}}\left\|\chi_{Q}\right\|_q^p\leq(\ell_Q+2)^{d\left(1-\frac{p}{q}\right)}\left\|\chi_{Q}\right\|_q^p,
\end{align*}
according to (\ref{Afterwa1}) and (\ref{Bairetheor10}). Therefore, 
\begin{align*}
\left\|\chi_{Q}\right\|_q\leq\left\|\chi_{Q}\right\|_{q,p}\leq(\ell_Q+2)^{d\left(\frac{1}{p}-\frac{1}{q}\right)}\left\|\chi_{Q}\right\|_q.
\end{align*}
Thus: 

$\bullet$ If $|Q|\leq1$, then
\begin{equation}
\left\|\chi_{Q}\right\|_q\leq\left\|\chi_{Q}\right\|_{q,p}\leq(\ell_Q+2)^{d\left(\frac{1}{p}-\frac{1}{q}\right)}\left\|\chi_{Q}\right\|_q\leq3^{d\left(\frac{1}{p}-\frac{1}{q}\right)}\left\|\chi_{Q}\right\|_q, \label{Bairetheor11}
\end{equation}
by (\ref{Bairetheor10}). This states (\ref{Bairetheor9}) when $q\geq p$.

$\bullet$ If $|Q|\geq1$, then  
\begin{align*}
\left\|\chi_{Q}\right\|_q\leq\left\|\chi_{Q}\right\|_{q,p}\leq(\ell_Q+2)^{d\left(\frac{1}{p}-\frac{1}{q}\right)}\left\|\chi_{Q}\right\|_q&\leq3^{d\left(\frac{1}{p}-\frac{1}{q}\right)}\ell_Q^{d\left(\frac{1}{p}-\frac{1}{q}\right)}\left\|\chi_{Q}\right\|_q\\
&=3^{d\left(\frac{1}{p}-\frac{1}{q}\right)}\ell_Q^{\frac{d}{p}}=3^{d\left(\frac{1}{p}-\frac{1}{q}\right)}\left\|\chi_{Q}\right\|_p,
\end{align*}
by (\ref{Bairetheor10}). Furthermore, $\left\|\chi_{Q}\right\|_p\leq\left\|\chi_{Q}\right\|_{q,p}$, when $q\geq p$. Hence 
\begin{equation}
\left\|\chi_{Q}\right\|_p\leq\left\|\chi_{Q}\right\|_{q,p}\leq3^{d\left(\frac{1}{p}-\frac{1}{q}\right)}\left\|\chi_{Q}\right\|_p. \label{Bairetheor12}
\end{equation}
This states (\ref{Bairetheor8}) when $q\geq p$.

Assume that $\frac{p}{q}\geq1$ (ie $q\leq p$). Then we have  
\begin{align*}
M_Q^{-\frac{\frac{p}{q}}{\left(\frac{p}{q}\right)'}}|Q|^{\frac{p}{q}}=M_Q^{-\frac{\frac{p}{q}}{\left(\frac{p}{q}\right)'}}\left(\sum_{\underset{Q\cap Q_k\neq\emptyset}{k\in\mathbb{Z}^d}}|Q\cap Q_k|\right)^{\frac{p}{q}}&\leq\sum_{\underset{Q\cap Q_k\neq\emptyset}{k\in\mathbb{Z}^d}}|Q\cap Q_k|^{\frac{p}{q}}\\
&\leq\left(\sum_{\underset{Q\cap Q_k\neq\emptyset}{k\in\mathbb{Z}^d}}|Q\cap Q_k|\right)^{\frac{p}{q}}=|Q|^{\frac{p}{q}},
\end{align*}
by \cite[Proposition 2.1, p. 311]{RBHS}, and hence 
\begin{align*}
(\ell_Q+2)^{d\left(1-\frac{p}{q}\right)}\left\|\chi_{Q}\right\|_q^p\leq M_Q^{1-\frac{p}{q}}\left\|\chi_{Q}\right\|_q^p&=M_Q^{-\frac{\frac{p}{q}}{\left(\frac{p}{q}\right)'}}\left\|\chi_{Q}\right\|_q^p\\
&=M_Q^{-\frac{\frac{p}{q}}{\left(\frac{p}{q}\right)'}}|Q|^{\frac{p}{q}}\leq\left\|\chi_{Q}\right\|_{q,p}^p\leq|Q|^{\frac{p}{q}}=\left\|\chi_{Q}\right\|_q^p,
\end{align*}
according to (\ref{Bairetheor10}) and (\ref{Afterwa1}). Consequently, 
\begin{eqnarray*}
(\ell_Q+2)^{d\left(\frac{1}{p}-\frac{1}{q}\right)}\left\|\chi_{Q}\right\|_q\leq\left\|\chi_{Q}\right\|_{q,p}\leq\left\|\chi_{Q}\right\|_q.
\end{eqnarray*}
Thus:

$\bullet$ If $|Q|\leq1$, then
\begin{equation}
3^{d\left(\frac{1}{p}-\frac{1}{q}\right)}\left\|\chi_{Q}\right\|_q\leq(\ell_Q+2)^{d\left(\frac{1}{p}-\frac{1}{q}\right)}\left\|\chi_{Q}\right\|_q\leq\left\|\chi_{Q}\right\|_{q,p}\leq\left\|\chi_{Q}\right\|_q, \label{Bairetheor13}
\end{equation}
by (\ref{Bairetheor10}). This establishes (\ref{Bairetheor9}) when $q\leq p$.

$\bullet$ If $|Q|\geq1$, then 
\begin{align*}
3^{d\left(\frac{1}{p}-\frac{1}{q}\right)}\left\|\chi_{Q}\right\|_p=3^{d\left(\frac{1}{p}-\frac{1}{q}\right)}\ell_Q^{\frac{d}{p}}&=3^{d\left(\frac{1}{p}-\frac{1}{q}\right)}\ell_Q^{d\left(\frac{1}{p}-\frac{1}{q}\right)}\left\|\chi_{Q}\right\|_q\\
&\leq(\ell_Q+2)^{d\left(\frac{1}{p}-\frac{1}{q}\right)}\left\|\chi_{Q}\right\|_q\leq\left\|\chi_{Q}\right\|_{q,p}\leq\left\|\chi_{Q}\right\|_q,
\end{align*}
by (\ref{Bairetheor10}). Moreover, $\left\|\chi_{Q}\right\|_{q,p}\leq\left\|\chi_{Q}\right\|_p$, when $q\leq p$. Hence
\begin{equation}
3^{d\left(\frac{1}{p}-\frac{1}{q}\right)}\left\|\chi_{Q}\right\|_p\leq\left\|\chi_{Q}\right\|_{q,p}\leq\left\|\chi_{Q}\right\|_p. \label{Bairetheor14}
\end{equation}
This establishes (\ref{Bairetheor8}) when $q\leq p$. 

To sum up, for $0<q,p<\infty$, we have\\
$\bullet$ when $|Q|\leq1$, 
\begin{equation*}
\min\left\{1,3^{d\left(\frac{1}{p}-\frac{1}{q}\right)}\right\}\left\|\chi_{Q}\right\|_q\leq\left\|\chi_{Q}\right\|_{q,p}\leq\max\left\{1,3^{d\left(\frac{1}{p}-\frac{1}{q}\right)}\right\}\left\|\chi_{Q}\right\|_q,
\end{equation*}
by (\ref{Bairetheor11}) and (\ref{Bairetheor13}), which gives (\ref{Bairetheor9}).\\
$\bullet$ when $|Q|\geq1$, 
\begin{equation*}
\min\left\{1,3^{d\left(\frac{1}{p}-\frac{1}{q}\right)}\right\}\left\|\chi_{Q}\right\|_p\leq\left\|\chi_{Q}\right\|_{q,p}\leq\max\left\{1,3^{d\left(\frac{1}{p}-\frac{1}{q}\right)}\right\}\left\|\chi_{Q}\right\|_p,
\end{equation*}
by (\ref{Bairetheor12}) and (\ref{Bairetheor14}), which gives (\ref{Bairetheor8}).

\begin{remark}
We point out that (\ref{Bairetheor5}) is valid for $0<p,q\leq1$, $1\leq r<\infty$, $0<\eta\leq1$ and $\delta\geq0$, by (\ref{revisiop08}) and (\ref{Bairetheor7}), meanwhile (\ref{Bairetheor6}) is valid for $1\leq r<\infty$, $0<\eta<\infty$ and $\delta\geq0$, according to (\ref{revisiop07}) and (\ref{revisiop08}).
\end{remark}

\section{Boundedness of some classical operators on the dual spaces of Hardy-amalgam spaces}

\subsection{Convolution Operator}
Given a function $k$ defined and locally integrable on $\mathbb{R}^d\backslash\left\{0\right\}$, we say that a tempered distribution $K$ in $\mathbb{R}^d$ ($K\in\mathcal{S'}:=\mathcal{S'}(\mathbb{R}^d)$) coincides with the function $k$ on $\mathbb{R}^d\backslash\left\{0\right\}$, if 
\begin{eqnarray}
\left\langle K, \psi\right\rangle=\int_{\mathbb{R}^d}k(x)\psi(x)dx,\label{applicattheo090}
\end{eqnarray}
for all $\psi\in\mathcal{S}$, with $\text{supp}(\psi)\subset\mathbb{R}^d\backslash\left\{0\right\}$. Here, we are interested in tempered distributions $K$ in $\mathbb{R}^d$ that coincide with a function $k$ on $\mathbb{R}^d\backslash\left\{0\right\}$ and that have the form 
\begin{eqnarray}
\left\langle K, \psi\right\rangle=\lim_{j\rightarrow\infty}\int_{|x|\geq\sigma_j}k(x)\psi(x)dx,\ \ \ \psi\in\mathcal{S},  \label{applicattheo091}
\end{eqnarray}
for some sequence $\sigma_j\downarrow 0$ as $j\rightarrow\infty$ and independent of $\psi$. Also, we consider the convolution operators $T$: $T(f)=K\ast f$, $f\in\mathcal{S}$. Thus, when $\widehat{K}\in L^{\infty}$, we have 
\begin{eqnarray}
T(f)(x)=\int_{\mathbb{R}^d}k(x-y)f(y)dy, \label{applicattheo093}
\end{eqnarray}
for all $f\in L^2$ with compact support and all $x\notin\text{supp}(f)$. For (\ref{applicattheo093}), see \cite[Chap. 3, p. 113]{MA}. From now on, the letter $K$ stands for both the tempered distribution $K$ and the associated function $k$. We recall the following theorem.

\begin{thm}\cite[Theorem 5.1]{JD} \label{theoremsingaj}  Let $K$ be a tempered distribution in $\mathbb{R}^d$ which coincides with a locally integrable function on $\mathbb{R}^d\backslash\left\{0\right\}$ and is such that 
\begin{eqnarray*}
|\widehat{K}(\xi)|\leq A,
\end{eqnarray*} 
\begin{eqnarray*}
\int_{|x|>2|y|}|K(x-y)-K(x)|dx\leq B,\ y\in\mathbb{R}^d. 
\end{eqnarray*}
then, for $1<r<\infty$,
\begin{eqnarray*}
\left\|K\ast f\right\|_r\leq C_r\left\|f\right\|_r
\end{eqnarray*} 
and
\begin{eqnarray*}
\left|\left\{x\in\mathbb{R}^d:\ |K\ast f(x)|>\lambda\right\}\right|\leq\frac{C}{\lambda}\left\|f\right\|_1.
\end{eqnarray*} 
\end{thm}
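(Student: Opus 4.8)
The plan is to run the classical Calder\'on--Zygmund scheme: establish the $L^2$ bound directly, deduce a weak-type $(1,1)$ estimate through the Calder\'on--Zygmund decomposition, and then cover the full range $1<r<\infty$ by Marcinkiewicz interpolation and duality. Since $T(f)=K\ast f$, we have $\widehat{T(f)}=\widehat{K}\,\widehat{f}$, so Plancherel's theorem and the hypothesis $|\widehat{K}(\xi)|\leq A$ give $\|T(f)\|_2=\|\widehat{K}\,\widehat{f}\|_2\leq A\|\widehat{f}\|_2=A\|f\|_2$ for every $f\in L^2$; hence $T$ extends to a bounded operator on $L^2$. Moreover, since $\widehat{K}\in L^\infty$, the pointwise formula $T(f)(x)=\int_{\mathbb{R}^d}K(x-y)f(y)\,dy$ of \eqref{applicattheo093} holds for $f\in L^2$ with compact support at points $x\notin\operatorname{supp}(f)$, which is exactly what makes the kernel estimates below meaningful (here and below $K$ denotes both the distribution and the associated function, following the convention fixed in the text).

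Next I would prove the weak-type $(1,1)$ bound. Fix $\lambda>0$ and $f\in L^1\cap L^2$ (a dense subclass of $L^1$), and apply the Calder\'on--Zygmund decomposition of $f$ at height $\lambda$: write $f=g+b$ with $b=\sum_j b_j$, where the cubes $Q_j$ are pairwise disjoint, $\operatorname{supp}(b_j)\subset Q_j$, $\int_{\mathbb{R}^d}b_j(x)\,dx=0$, $\int_{Q_j}|b_j(x)|\,dx\lsim\lambda|Q_j|$, $\sum_j|Q_j|\lsim\lambda^{-1}\|f\|_1$, $\|g\|_\infty\lsim\lambda$ and $\|g\|_1\leq\|f\|_1$. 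For the good part, Chebyshev's inequality and the $L^2$ bound give $|\{x:|T(g)(x)|>\lambda/2\}|\lsim\lambda^{-2}\|g\|_2^2\lsim\lambda^{-2}\|g\|_\infty\|g\|_1\lsim\lambda^{-1}\|f\|_1$. For the bad part, put $Q_j^\ast:=2\sqrt{d}\,Q_j$ and $\mathcal{O}:=\bigcup_j Q_j^\ast$, so that $|\mathcal{O}|\lsim\sum_j|Q_j|\lsim\lambda^{-1}\|f\|_1$ and it remains to control $|\{x\notin\mathcal{O}:|T(b)(x)|>\lambda/2\}|$. Writing $y_j$ for the center of $Q_j$ and using the vanishing moment $\int b_j=0$, for $x\notin Q_j^\ast$ one has $T(b_j)(x)=\int_{Q_j}\bigl(K(x-y)-K(x-y_j)\bigr)b_j(y)\,dy$; integrating in $x$ over $\mathbb{R}^d\setminus Q_j^\ast$, interchanging the order of integration, and invoking the regularity hypothesis $\int_{|x|>2|y|}|K(x-y)-K(x)|\,dx\leq B$ (legitimate since $|x-y_j|>2|y-y_j|$ whenever $x\notin Q_j^\ast$ and $y\in Q_j$) yields $\int_{\mathbb{R}^d\setminus Q_j^\ast}|T(b_j)(x)|\,dx\lsim B\int_{Q_j}|b_j(y)|\,dy$. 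Summing over $j$ gives $\int_{\mathbb{R}^d\setminus\mathcal{O}}|T(b)(x)|\,dx\lsim B\|f\|_1$, hence $|\{x\notin\mathcal{O}:|T(b)(x)|>\lambda/2\}|\lsim\lambda^{-1}\|f\|_1$; adding the three contributions gives the weak $(1,1)$ estimate, and a routine limiting argument removes the auxiliary assumption $f\in L^2$.

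Finally, the strong $L^r$ bounds follow in two steps. For $1<r<2$, the Marcinkiewicz interpolation theorem applied to the weak $(1,1)$ estimate and the (strong, hence weak) $(2,2)$ estimate gives $\|T(f)\|_r\leq C_r\|f\|_r$. For $2<r<\infty$, one observes that the transpose of $T$ is convolution with $\widetilde{K}(x):=K(-x)$, which satisfies $|\widehat{\widetilde{K}}(\xi)|=|\widehat{K}(-\xi)|\leq A$ and, after the substitutions $x\mapsto-x$ and $y\mapsto-y$, the same inequality $\int_{|x|>2|y|}|\widetilde{K}(x-y)-\widetilde{K}(x)|\,dx\leq B$; hence the corresponding operator is bounded on $L^{r'}$ with $\frac1r+\frac1{r'}=1$ and $1<r'<2$, and by duality $T$ is bounded on $L^r$. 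I expect the bad-part estimate in the weak $(1,1)$ argument to be the main obstacle: one must justify the pointwise representation of $T(b_j)$ off $\operatorname{supp}(b_j)$ --- precisely the point where $\widehat{K}\in L^\infty$ and \eqref{applicattheo093} enter --- and then use the cancellation $\int b_j=0$ together with the regularity of $K$ to replace the non-integrable singularity of $K$ by the integrable difference $K(x-y)-K(x-y_j)$; the remaining steps are routine.
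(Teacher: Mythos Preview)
Your argument is the classical Calder\'on--Zygmund scheme and is correct. Note, however, that the paper does not supply its own proof of this statement: it is simply quoted as \cite[Theorem 5.1]{JD} and used as a black box in the subsequent boundedness results. What you have written is essentially the proof given in \cite{JD} (Plancherel for $L^2$, Calder\'on--Zygmund decomposition for weak $(1,1)$, Marcinkiewicz interpolation for $1<r<2$, and duality via $\widetilde K(x)=K(-x)$ for $2<r<\infty$), so there is nothing to compare beyond observing that your write-up matches the cited reference.
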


Notice that $C_r:=C(d,r,A,B)$ (see \cite{JD}, p. 110). We proved the following results in \cite{AbFt1}. 

\begin{thm}\cite[Theorem 4.13]{AbFt1} \label{theoremsing4}
Let $K$ be a tempered distribution in $\mathbb{R}^d$ that coincides with a locally integrable function on $\mathbb{R}^d\backslash\left\{0\right\}$ and is such that 
\begin{eqnarray}
|\widehat{K}(\xi)|\leq A, \label{applicattheo12}
\end{eqnarray} 
and, there exist an integer $\delta>0$ and a constant $B>0$ such that
\begin{eqnarray}
|\partial^{\beta}K(x)|\leq\frac{B}{|x|^{d+|\beta|}}\ , \label{applicattheo15}
\end{eqnarray}
for all $x\neq0$ and all multi-indexes $\beta$ with $|\beta|\leq\delta$. If $\frac{d}{d+\delta}<q\leq 1$ and $q\leq p<\infty$, then the operator $T(f)=K\ast f$, for all $f\in\mathcal{S}$, extends to a bounded operator from $\mathcal{H}^{(q,p)}$ to $(L^q,\ell^p)$.
\end{thm}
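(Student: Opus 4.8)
The plan is to reduce the statement to the atomic characterization of $\mathcal{H}^{(q,p)}$ and exploit the $L^r$ (Calder\'on--Zygmund) theory from Theorem \ref{theoremsingaj}. Since $\mathcal{H}_{fin}^{(q,p)}$ is dense in $\mathcal{H}^{(q,p)}$, it suffices to establish the estimate $\left\|T(f)\right\|_{q,p}\lsim\left\|f\right\|_{\mathcal{H}^{(q,p)}}$ for $f$ a finite linear combination of $(q,r_0,\delta)$-atoms with $r_0=2$, say. By the atomic decomposition theorem (\cite[Theorems 4.3, 4.4 and 4.6]{AbFt}), one has $f=\sum_{i}\lambda_i\mathbf{a}_i$ with atoms $\mathbf{a}_i$ supported in cubes $Q_i$ and $\left\|\left\{\lambda_i\chi_{Q_i}/\left\|\chi_{Q_i}\right\|_{q,p}\right\}\right\|$-type control by $\left\|f\right\|_{\mathcal{H}^{(q,p)}}$ (the $\ell^q$-normalized Wiener-amalgam quasi-norm of the $\lambda_i$'s). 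Because $(L^q,\ell^p)$ is a quasi-Banach space with a $q$-triangle inequality when $q\le 1$, it is enough to prove a \emph{uniform} bound $\left\|T(\mathbf{a})\right\|_{q,p}\lsim 1$ for a single atom $\mathbf{a}$, together with the appropriate summation over $i$; more precisely one shows $\left\|T(\mathbf{a})\chi_{Q}\right\|$ decays off the support so that the $(L^q,\ell^p)$-quasi-norms of $\lambda_i T(\mathbf{a}_i)$ sum up correctly.

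The heart of the matter is the single-atom estimate. Fix a $(q,2,\delta)$-atom $\mathbf{a}$ supported in a cube $Q=Q(x_0,\ell)$ with $\left\|\mathbf{a}\right\|_2\le|Q|^{1/2-1/q}$ and vanishing moments up to order $\delta\ge\lfloor d(1/q-1)\rfloor$. Split the integral of $|T\mathbf a|^q$ over $2\sqrt d\,Q$ and over its complement. On $2\sqrt d\,Q$: apply Theorem \ref{theoremsingaj} with $r=2$ (the hypothesis \eqref{applicattheo12} gives $|\widehat K|\le A$, and \eqref{applicattheo15} with $|\beta|=1$ yields the H\"ormander condition $\int_{|x|>2|y|}|K(x-y)-K(x)|dx\le B$ by the mean value theorem), so $\left\|T\mathbf a\right\|_2\lsim\left\|\mathbf a\right\|_2\le|Q|^{1/2-1/q}$; H\"older on the ball $2\sqrt d\,Q$ then gives a localized $L^q$ bound that, via the bounds \eqref{Bairetheor8}--\eqref{Bairetheor9} relating $\left\|\chi_{cQ}\right\|_{q,p}$ to $\left\|\chi_Q\right\|_{q,p}$, controls the near part of $\left\|T\mathbf a\right\|_{q,p}$. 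On the complement $\mathbb R^d\setminus 2\sqrt d\,Q$: use the vanishing moments of $\mathbf a$ to subtract the Taylor polynomial of $K(x-\cdot)$ at $x_0$ of order $\delta$, and apply \eqref{applicattheo15} with $|\beta|=\delta+1$ (Taylor's formula with integral remainder) to get the pointwise decay
\begin{align*}
|T\mathbf a(x)|\lsim \int_Q\frac{|y-x_0|^{\delta+1}}{|x-x_0|^{d+\delta+1}}|\mathbf a(y)|\,dy\lsim \frac{\ell^{\delta+1}|Q|^{1-1/q}}{|x-x_0|^{d+\delta+1}},\qquad x\notin 2\sqrt d\,Q,
\end{align*}
using $\left\|\mathbf a\right\|_1\le|Q|^{1/q'}\left\|\mathbf a\right\|_q\lsim|Q|^{1-1/q}$. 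This is exactly the standard ``$(q,\infty,\delta)$-molecule'' type tail with the critical exponent condition $d(1/q-1)<\delta+1$, i.e. $q>d/(d+\delta)$ — which is precisely the hypothesis $\tfrac{d}{d+\delta}<q\le1$.

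Having the pointwise tail bound, the remaining work is to check that the resulting functions $T\mathbf a$ behave like (fixed multiples of) generalized atoms or molecules for $(L^q,\ell^p)$ and that the sum $\sum_i\lambda_i T\mathbf a_i$ converges in $(L^q,\ell^p)$ with the correct quasi-norm estimate. For this I would dyadically decompose $\mathbb R^d\setminus 2\sqrt d\,Q_i=\bigcup_{k\ge1}(2^{k+1}\sqrt d\,Q_i\setminus 2^k\sqrt d\,Q_i)$, estimate $\left\|T\mathbf a_i\,\chi_{2^{k+1}\sqrt d\,Q_i}\right\|_{q,p}\lsim 2^{-k(d+\delta+1-d/q)}$ on each annulus using the tail bound together with \eqref{Bairetheor8}--\eqref{Bairetheor9}, and sum the geometric series (finite since the exponent is positive, again by $q>d/(d+\delta)$). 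Finally, invoke the $q$-subadditivity of $\left\|\cdot\right\|_{q,p}^q$ and the estimate $\left\|\left\{\lambda_i\right\}\right\|$ coming from the atomic decomposition to conclude $\left\|Tf\right\|_{q,p}\lsim\left\|f\right\|_{\mathcal H^{(q,p)}}$ on the dense subspace $\mathcal H_{fin}^{(q,p)}$, and extend by density. The main obstacle I anticipate is the bookkeeping in this last summation step: one must pass from a bound on each $\left\|T\mathbf a_i\right\|_{q,p}$ (which is \emph{uniform} but not summable on its own, since there can be infinitely many atoms in the same unit cube) to control of $\left\|\sum_i\lambda_i T\mathbf a_i\right\|_{q,p}$, which requires keeping the Wiener-amalgam structure of the coefficient sequence $\{\lambda_i\chi_{Q_i}\}$ intact through the annular decomposition — exactly the kind of argument carried out for $\mathcal H^{(q,p)}$ itself in \cite{AbFt}, here applied to the target $(L^q,\ell^p)$ directly rather than to $\mathcal H^{(q,p)}$.
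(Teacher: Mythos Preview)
The paper does not give its own proof of this theorem: it is merely quoted as \cite[Theorem 4.13]{AbFt1} and used as an input for Theorems \ref{applicadualqpconv2} and \ref{applicadualqp0}. So there is no ``paper proof'' to compare to here. Your strategy --- atomic decomposition, $L^2$-boundedness on the local part via Theorem \ref{theoremsingaj}, Taylor expansion plus vanishing moments on the tail, dyadic annuli and summation --- is exactly the standard route and is essentially how such results are proved in \cite{AbFt1}.

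Two points deserve correction. First, a genuine slip: you invoke \eqref{applicattheo15} with $|\beta|=\delta+1$, but the hypothesis only gives $|\partial^{\beta}K(x)|\le B|x|^{-d-|\beta|}$ for $|\beta|\le\delta$. The cure is easy: subtract the Taylor polynomial of degree $\delta-1$ (the atoms can be taken with moments up to order $\delta-1\ge\lfloor d(1/q-1)\rfloor$, since $d(1/q-1)<\delta$), and use the $\delta$-th derivative for the remainder. The tail bound then reads $|T\mathbf a(x)|\lsim \ell^{\delta}|Q|^{1-1/q}|x-x_0|^{-(d+\delta)}$, and the annulus sum $\sum_{k\ge1}2^{-k(d+\delta-d/q)}$ still converges precisely under $q>d/(d+\delta)$.

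Second, and more substantively, the ``bookkeeping'' you flag at the end is where the genuine work lies in the Wiener-amalgam setting, and your sketch does not yet resolve it. A uniform bound $\left\|T\mathbf a_i\right\|_{q,p}\lsim 1$ together with $q$-subadditivity only gives $\left\|\sum_i\lambda_iT\mathbf a_i\right\|_{q,p}^q\lsim\sum_i|\lambda_i|^q$, which is \emph{not} controlled by $\left\|f\right\|_{\mathcal H^{(q,p)}}$ when $p>q$. What is actually needed (and is what \cite{AbFt1} does) is to keep the spatial localization: split $T\mathbf a_i=T\mathbf a_i\,\chi_{c Q_i}+T\mathbf a_i\,\chi_{(cQ_i)^{c}}$, estimate the first piece in $(L^q,\ell^p)$ via a vector-valued Fefferman--Stein maximal inequality applied to $\sum_i|\lambda_i|\mathfrak M(\mathbf a_i)\chi_{cQ_i}$ (this is where $r_0>\max\{1,p\}$ matters; $r_0=2$ is not always enough --- one should take $r_0>\max\{1,p\}$), and handle the second piece by pushing the pointwise tail bound through the same maximal-function machinery so that the coefficient control $\left\|\sum_i(|\lambda_i|\chi_{Q_i}/\|\chi_{Q_i}\|_{q,p})^{\eta}\right\|_{q/\eta,p/\eta}$ from the atomic decomposition survives. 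Without this step the argument is incomplete.
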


\begin{thm}\cite[Theorem 4.17]{AbFt1} \label{theoremsing5}
Let $K$ be a tempered distribution in $\mathbb{R}^d$ that coincides with a locally integrable function on $\mathbb{R}^d\backslash\left\{0\right\}$ and satisfies assumptions (\ref{applicattheo12}) and (\ref{applicattheo15}). If $\frac{d}{d+\delta}<q\leq 1$ and $q\leq p<\infty$, then the operator $T(f)=K\ast f$, for all $f\in\mathcal{S}$, extends to a bounded operator from $\mathcal{H}^{(q,p)}$ to $\mathcal{H}^{(q,p)}$.  
\end{thm}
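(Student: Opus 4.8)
The plan is to run the classical atomic argument, adapted to the amalgam norm. First note two facts about the parameters. Since $\delta$ is a positive integer with $\frac{d}{d+\delta}<q$, i.e. $\delta>d\left(\frac1q-1\right)$, the minimal admissible moment order $s:=\left\lfloor d\left(\frac1q-1\right)\right\rfloor$ of atoms for $\mathcal{H}^{(q,p)}$ satisfies $s+1\leq\delta$, so (\ref{applicattheo15}) controls $\partial^{\beta}K$ for \emph{every} $|\beta|\leq s+1$ — one order past the vanishing moments of the atoms (this is why one must use minimal-order atoms). Also, (\ref{applicattheo12}) together with Plancherel (equivalently Theorem \ref{theoremsingaj} with $r=2$) makes $T$ bounded on $L^{2}$. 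Now I would fix $f\in\mathcal{H}_{fin}^{(q,p)}$, a dense subspace, and decompose $f=\sum_{j}\lambda_{j}\mathbf{a}_{j}$, a finite sum of $(q,\infty,s)$-atoms with $\mathbf{a}_{j}$ supported in $Q_{j}=Q(c_{j},\ell_{j})$ and with the coefficients controlled by the atomic quasi-norm, namely $\big\|\big(\sum_{j}(|\lambda_{j}|\,\|\chi_{Q_{j}}\|_{q,p}^{-1}\chi_{Q_{j}})^{q}\big)^{1/q}\big\|_{q,p}\lesssim\|f\|_{\mathcal{H}^{(q,p)}}$ (see \cite{AbFt,AbFt2}). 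Since each $\mathbf{a}_{j}\in L^{2}$ has compact support and $\widehat{K}\in L^{\infty}$, the distribution $T\mathbf{a}_{j}=K\ast\mathbf{a}_{j}$ is a genuine $L^{2}$ function, the decomposition converges in $\mathcal{S}'$, hence $Tf=\sum_{j}\lambda_{j}T\mathbf{a}_{j}$ and $\mathcal{M}_{\varphi}(Tf)\leq\sum_{j}|\lambda_{j}|\,\mathcal{M}_{\varphi}(T\mathbf{a}_{j})$ pointwise; the same computation shows that if $\sum_{j}\lambda_{j}\mathbf{a}_{j}=0$ in $\mathcal{H}^{(q,p)}$ then $\sum_{j}\lambda_{j}T\mathbf{a}_{j}=0$, so the eventual extension is well defined.

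The core is a pointwise bound on $\mathcal{M}_{\varphi}(T\mathbf{a}_{j})$, obtained by the usual near/far splitting at a fixed dilate $\kappa Q_{j}$ (take $\kappa=2\sqrt d$). For $y\notin\kappa Q_{j}$ one replaces $K(y-\cdot)$ by its degree-$s$ Taylor polynomial centered at $c_{j}$; the vanishing moments of $\mathbf{a}_{j}$ kill the polynomial, and the bound (\ref{applicattheo15}) for $|\beta|=s+1$ with $\|\mathbf{a}_{j}\|_{1}\leq|Q_{j}|^{1-1/q}$ yields $|T\mathbf{a}_{j}(y)|\lesssim|Q_{j}|^{-1/q}\big(\ell_{j}/(\ell_{j}+|y-c_{j}|)\big)^{d+s+1}$. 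Since $\varphi$ is supported in the unit ball and $\int T\mathbf{a}_{j}=\widehat{K}(0)\int\mathbf{a}_{j}=0$, the standard molecular estimates (with a harmless logarithmic loss when $s=0$, to be absorbed below) upgrade this to $\mathcal{M}_{\varphi}(T\mathbf{a}_{j})(x)\lesssim|Q_{j}|^{-1/q}\big[M\chi_{Q_{j}}(x)\big]^{\theta}$ on $(\kappa Q_{j})^{c}$, where $M$ is the Hardy--Littlewood maximal operator and $\theta:=\frac{d+s+1}{d}>\frac1q$; the strict inequality is exactly $s+1>d\left(\frac1q-1\right)$, which always holds for $s=\left\lfloor d\left(\frac1q-1\right)\right\rfloor$. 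For $x\in\kappa Q_{j}$ one uses only the trivial bound $\mathcal{M}_{\varphi}(T\mathbf{a}_{j})\leq CM(T\mathbf{a}_{j})$ together with $\|T\mathbf{a}_{j}\|_{2}\leq C\|\mathbf{a}_{j}\|_{2}\leq C|Q_{j}|^{1/2-1/q}$.

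It remains to reassemble in $(L^{q},\ell^{p})$. The far piece $\big\|\sum_{j}|\lambda_{j}|\,|Q_{j}|^{-1/q}[M\chi_{Q_{j}}]^{\theta}\big\|_{q,p}$ is controlled by a Fefferman--Stein-type vector-valued maximal inequality in $(L^{q},\ell^{p})$ (valid since $\theta q>1$; this is the mechanism behind the atomic decomposition in \cite{AbFt}), reducing it to $\big\|\big(\sum_{j}(|\lambda_{j}|\,|Q_{j}|^{-1/q}\chi_{Q_{j}})^{q}\big)^{1/q}\big\|_{q,p}$, which matches the atomic quasi-norm up to the comparisons (\ref{Bairetheor8})--(\ref{Bairetheor9}) between $\|\chi_{Q_{j}}\|_{q,p}$ and $|Q_{j}|^{1/q}$, hence is $\lesssim\|f\|_{\mathcal{H}^{(q,p)}}$. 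For the near piece, since $q\leq1$ and $q\leq p$ the functional $\|\cdot\|_{q,p}^{q}$ is subadditive, so $\big\|\sum_{j}|\lambda_{j}|M(T\mathbf{a}_{j})\chi_{\kappa Q_{j}}\big\|_{q,p}^{q}\leq\sum_{j}|\lambda_{j}|^{q}\|M(T\mathbf{a}_{j})\chi_{\kappa Q_{j}}\|_{q,p}^{q}$, and a local Hölder estimate in $(L^{q},\ell^{p})$ for functions supported in $\kappa Q_{j}$, combined with the $L^{2}$-bound above, gives $\|M(T\mathbf{a}_{j})\chi_{\kappa Q_{j}}\|_{q,p}\lesssim|Q_{j}|^{-1/q}\|\chi_{Q_{j}}\|_{q,p}$, summing again to $\lesssim\|f\|_{\mathcal{H}^{(q,p)}}$. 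Thus $\|Tf\|_{\mathcal{H}^{(q,p)}}=\|\mathcal{M}_{\varphi}(Tf)\|_{q,p}\lesssim\|f\|_{\mathcal{H}^{(q,p)}}$ on $\mathcal{H}_{fin}^{(q,p)}$, and completeness of $\mathcal{H}^{(q,p)}$ gives the unique bounded extension, which coincides with $K\ast\cdot$ in $\mathcal{S}'$.

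The main obstacle is this reassembly: contrary to the $\mathcal{H}^{q}$ setting, a single atom carries a scale- and location-dependent weight $\|\chi_{Q_{j}}\|_{q,p}$ in the amalgam norm, so ``uniform boundedness on atoms'' must be read through the atomic quasi-norm, and one must manage the interaction between the quasi-Banach exponent $q\leq1$ and the exponent $p$ via the amalgam Fefferman--Stein inequality and the $\|\cdot\|_{q,p}^{q}$-subadditivity; the molecular decay of $\mathcal{M}_{\varphi}(T\mathbf{a}_{j})$ for large dilation parameters $t$ (where $\int T\mathbf{a}_{j}=0$ is essential, with the logarithmic loss when $s=0$) is the most computational point. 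An essentially equivalent alternative is to verify directly that $|Q_{j}|^{1/q}T\mathbf{a}_{j}$ is a fixed multiple of a $(q,p)$-molecule and to invoke the molecular decomposition of $\mathcal{H}^{(q,p)}$ from \cite{AbFt1}.
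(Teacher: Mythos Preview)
This theorem is not proved in the present paper: it is quoted verbatim from \cite[Theorem 4.17]{AbFt1} and only \emph{used} here (in the proof of Theorem~\ref{applicadualqpconv4general}). So there is no ``paper's own proof'' to compare against; your sketch is in the spirit of the original argument in \cite{AbFt1}, which is indeed the classical atomic/molecular route you describe.

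That said, your near-piece reassembly has a genuine gap. After $q$-subadditivity you arrive at
\[
\sum_{j}|\lambda_{j}|^{q}\,\big\|M(T\mathbf{a}_{j})\chi_{\kappa Q_{j}}\big\|_{q,p}^{q}
\ \lesssim\ \sum_{j}|\lambda_{j}|^{q}\,\big(|Q_{j}|^{-1/q}\|\chi_{Q_{j}}\|_{q,p}\big)^{q},
\]
and you claim this is $\lesssim\|f\|_{\mathcal H^{(q,p)}}^{q}$. In the amalgam setting this fails for $p>q$: take $N$ pairwise disjoint unit cubes $Q_{1},\dots,Q_{N}$ and coefficients $\lambda_{j}=1/N$. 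Then the right-hand side equals $N^{1-q}$, whereas the atomic quasi-norm you quoted gives
\[
\Big\|\Big(\sum_{j}\big(|\lambda_{j}|\,\|\chi_{Q_{j}}\|_{q,p}^{-1}\chi_{Q_{j}}\big)^{q}\Big)^{1/q}\Big\|_{q,p}
=\tfrac{1}{N}\,\Big\|\sum_{j}\chi_{Q_{j}}\Big\|_{q,p}\approx N^{\frac1p-1},
\]
so the desired inequality would force $1-q\le q\big(\tfrac1p-1\big)$, i.e.\ $p\le q$. This is exactly the point you flag as the ``main obstacle'': unlike in $\mathcal H^{q}$, the amalgam atomic norm is \emph{not} $(\sum_{j}|\lambda_{j}|^{q})^{1/q}$, and $q$-subadditivity discards the spatial interaction among the $Q_{j}$ that the norm $\|\cdot\|_{q,p}$ records. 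The same comment applies to your invocation of (\ref{Bairetheor8})--(\ref{Bairetheor9}): those only give the \emph{one-sided} inequality $\|\chi_{Q}\|_{q,p}\le|Q|^{1/q}$ (which is what you need for the far piece), not a two-sided comparison.

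The fix is to avoid separating the atoms on the near piece. One keeps the sum together and uses, for some $r_{0}\in(\max\{1,p\},\infty)$ on which $T$ is bounded (available here via Theorem~\ref{theoremsingaj}), the pointwise domination
\[
\sum_{j}|\lambda_{j}|\,|T\mathbf{a}_{j}|\,\chi_{\kappa Q_{j}}
\ \le\ \Big(\sum_{j}\big(|\lambda_{j}|\,|T\mathbf{a}_{j}|\chi_{\kappa Q_{j}}\big)^{u}\Big)^{1/u}
\]
for $u\le q$, followed by the vector-valued Fefferman--Stein inequality in $(L^{q},\ell^{p})$ applied to $\big[\mathfrak M\big(|T\mathbf{a}_{j}|^{r_{0}}\chi_{\kappa Q_{j}}\big)\big]^{1/r_{0}}$, together with $\|T\mathbf{a}_{j}\|_{r_{0}}\lesssim|Q_{j}|^{1/r_{0}-1/q}$; this is how the near piece is handled in \cite{AbFt1} (and in the ball quasi-Banach framework of \cite{SHYY,ZYYW}). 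Your alternative suggestion --- show that $T\mathbf{a}_{j}$ is a fixed multiple of a $(q,p)$-molecule and invoke the molecular reconstruction from \cite{AbFt1} --- is precisely what circumvents this issue cleanly, and is the cleanest route here.
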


Our results are the following.

\begin{thm} \label{applicadualqpconv2} 
Suppose that $1\leq p<\infty$. Let $1\leq r<p'$ and, $0<\eta<1$ if $1<r<p'$ or $0<\eta\leq1$ if $r=1$. Let $K$ be a tempered distribution in $\mathbb{R}^d$ that coincides with a locally integrable function on $\mathbb{R}^d\backslash\left\{0\right\}$ and satisfies assumptions (\ref{applicattheo12}) and (\ref{applicattheo15}). 

Then the operator $T(f)=K\ast f$, for all $f\in\mathcal{S}$, is extendable on $(L^{\infty},\ell^{p'})$ and there exists a constant $C>0$ such that
\begin{eqnarray}
\left\|T(f)\right\|_{\mathcal{L}_{r,\phi_{1},\delta}^{(1,p,\eta)}}\leq C\left\|f\right\|_{\infty,p'}, \label{bisapplicattheo15}
\end{eqnarray}
for all $f\in(L^{\infty},\ell^{p'})$.
\end{thm}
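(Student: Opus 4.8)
The plan is to obtain the extension of $T$ to $(L^{\infty},\ell^{p'})$ as the Banach-space adjoint of a \emph{transpose} convolution operator acting on $\mathcal{H}^{(1,p)}$, and then to read off (\ref{bisapplicattheo15}) from the duality identification $\left(\mathcal{H}^{(1,p)}\right)^{\ast}\cong\mathcal{L}_{r,\phi_{1},\delta}^{(1,p,\eta)}$. First some bookkeeping. Since $\delta\geq1$ we have $\frac{d}{d+\delta}<1$, so $T$ is bounded on $\mathcal{H}^{(1,p)}$ by Theorem \ref{theoremsing5}. Introduce the reflected kernel $\widetilde{K}$, i.e. the tempered distribution with $\langle\widetilde{K},\psi\rangle=\langle K,\psi(-\cdot)\rangle$; it coincides with $x\mapsto K(-x)$ on $\mathbb{R}^d\setminus\{0\}$ and satisfies $|\widehat{\widetilde{K}}(\xi)|=|\widehat{K}(-\xi)|\leq A$ and $|\partial^{\beta}\widetilde{K}(x)|=|(\partial^{\beta}K)(-x)|\leq B|x|^{-d-|\beta|}$ for $|\beta|\leq\delta$, so that $\widetilde{T}(u):=\widetilde{K}\ast u$ is also bounded on $\mathcal{H}^{(1,p)}$ by Theorem \ref{theoremsing5}. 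Finally, with $q=1$ one has $\left\lfloor d\left(\frac{1}{q}-1\right)\right\rfloor=0\leq\delta$, and relabeling $r\leftrightarrow r'$ in Theorem \ref{theoremdualunifie} gives $\left(\mathcal{H}^{(1,p)}\right)^{\ast}\cong\mathcal{L}_{r,\phi_{1},\delta}^{(1,p,\eta)}$ exactly for the ranges of $r$ and $\eta$ assumed here (and, consistently with Remark \ref{remarqedualeqp1}, $(L^{\infty},\ell^{p'})\hookrightarrow\mathcal{L}_{r,\phi_{1},\delta}^{(1,p,\eta)}$).

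Next I construct $T(f)$ for $f\in(L^{\infty},\ell^{p'})$. Since $\mathcal{H}^{(1,p)}\hookrightarrow(L^1,\ell^p)$ and $\|fu\|_{1}\leq\|f\|_{\infty,p'}\|u\|_{1,p}$ (Hölder for amalgams), the map $u\mapsto\int_{\mathbb{R}^d}fu$ is a bounded functional on $\mathcal{H}^{(1,p)}$ of norm $\leq\|f\|_{\infty,p'}$; by uniqueness of the continuous extension it is the functional $\widetilde{T_f}$ of Theorem \ref{theoremdualunifie} attached to $f$. Composing with $\widetilde{T}$ yields the functional $u\mapsto\widetilde{T_f}(\widetilde{T}u)=\int_{\mathbb{R}^d}f\,(\widetilde{K}\ast u)$, which lies in $\left(\mathcal{H}^{(1,p)}\right)^{\ast}$ with norm at most $\|\widetilde{T}\|\,\|f\|_{\infty,p'}$; by Theorem \ref{theoremdualunifie}(2) it is represented by a unique $g\in\mathcal{L}_{r,\phi_{1},\delta}^{(1,p,\eta)}$, and
$$\|g\|_{\mathcal{L}_{r,\phi_{1},\delta}^{(1,p,\eta)}}\lsim\|\widetilde{T}\|\,\|f\|_{\infty,p'}\lsim\|f\|_{\infty,p'},$$
the last constant depending only on $d,A,B,\delta,p$. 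We set $T(f):=g$; this already gives (\ref{bisapplicattheo15}).

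It remains to check that $f\mapsto T(f)$ genuinely extends $T|_{\mathcal{S}}$; since $\mathcal{S}\subset(L^{\infty},\ell^{p'})$, this means $g=K\ast f$ in $\mathcal{L}_{r,\phi_{1},\delta}^{(1,p,\eta)}$ (that is, modulo $\mathcal{P}_{\delta}$) whenever $f\in\mathcal{S}$. For such $f$, $K\ast f$ is a function belonging to $L^{\rho}$ for every $\rho\in(1,\infty)$ by Theorem \ref{theoremsingaj}, hence to $L^r_{\mathrm{loc}}$. By Theorem \ref{theoremdualunifie} the functional $g$ acts on $\mathcal{H}_{fin}^{(1,p)}$ by $u\mapsto\int gu$, so, using that finite atomic combinations are dense in $\mathcal{H}^{(1,p)}$ and that the atoms spanning $\mathcal{H}_{fin}^{(1,p)}$ have vanishing moments up to order $\delta$, it suffices to establish the identity $\int_{\mathbb{R}^d}f\,(\widetilde{K}\ast u)=\int_{\mathbb{R}^d}(K\ast f)\,u$ for every such $u$. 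The kernel fails to be integrable across the diagonal, so one truncates $\widetilde{K}$ to $\{|x-y|>\varepsilon\}$, applies Fubini legitimately to the truncation, and lets $\varepsilon\to0$: off the (compact) support of $u$ the passage to the limit is routine via $|K(x)|\lsim|x|^{-d}$ and the $L^r$-bound of Theorem \ref{theoremsingaj}, while near the support of $u$ one uses the moment cancellation of the atoms together with the derivative bounds (\ref{applicattheo15}) to dominate $\widetilde{K}_{\varepsilon}\ast u$ uniformly in $\varepsilon$.

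The formal part — the adjoint construction and the norm estimate — is immediate from the duality theorem and the known $\mathcal{H}^{(1,p)}$-boundedness of $T$ and $\widetilde{T}$; I expect the genuine obstacle to be this last step, i.e. a careful justification of the Fubini exchange in the presence of the diagonal singularity, where the cancellation properties of the atoms (matched to the order $\delta$ of the derivative estimates) must be used.
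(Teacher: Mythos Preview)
Your approach is correct and is essentially the ``unified'' variant that the paper itself sketches in the Remark immediately following its proof of this theorem. The paper's main proof splits into $p=1$ (citing the classical $L^\infty\to BMO$ result) and $p>1$ (using that $(L^\infty,\ell^{p'})\subset L^{p'}$ so that $T(f)$ is already an honest $L^{p'}$-function, then testing against atoms via $\int T(f)\,g=\int f\,T^\ast(g)$ with the bound $|\int f\,T^\ast(g)|\leq\|f\|_{\infty,p'}\|T^\ast(g)\|_{1,p}$ from Theorem~\ref{theoremsing4}). You instead define $T(f)$ directly as the duality representative of $u\mapsto\int f\,(\widetilde K\ast u)$, which handles all $p\geq1$ at once and invokes the stronger $\mathcal H^{(1,p)}\to\mathcal H^{(1,p)}$ boundedness (Theorem~\ref{theoremsing5}) rather than $\mathcal H^{(1,p)}\to(L^1,\ell^p)$. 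Both routes are fine; the paper's version has the minor advantage that for $p>1$ it identifies $T(f)$ with a concrete $L^{p'}$ function rather than an equivalence class modulo $\mathcal P_\delta$.

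One simplification: your consistency check for $f\in\mathcal S$ is overcomplicated. Since $f\in\mathcal S\subset L^2$ and every $(1,r',\delta)$-atom $u$ lies in $L^2$, and since $\widehat K\in L^\infty$ makes $K\ast\cdot$ and $\widetilde K\ast\cdot$ bounded on $L^2$, the transpose identity $\int(K\ast f)\,u=\int f\,(\widetilde K\ast u)$ is immediate from the Fourier side (or from the $L^2$ adjoint relation). No truncation, no appeal to the moment cancellation of atoms or to the pointwise derivative bounds~(\ref{applicattheo15}) is needed at this step; those hypotheses have already done their work in securing the $\mathcal H^{(1,p)}$-boundedness of $\widetilde T$.
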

\begin{proof}
If $p=1$, then $\mathcal{L}_{r,\phi_{1},\delta}^{(1,1,\eta)}\cong\mathcal{L}_{r,\phi_{1},\delta}\cong\mathcal{L}_{1,\phi_{1},0}=\mathcal{L}_{1,\phi_{2},0}=\mathrm{BMO}(\mathbb{R}^d)$ et $(L^{\infty},\ell^{1'})=(L^{\infty},\ell^{\infty})=L^{\infty}$, and hence (\ref{bisapplicattheo15}) holds by \cite[Corollary 3.4.10 and Remark 3.4.11, pp. 193-194]{LG2} (see also \cite[Proposition 1, p. 156]{MA}).

Suppose that $1<p<\infty$. Note that $T$ extends to a bounded operator from $\mathcal{H}^{(1,p)}$ into $(L^1,\ell^p)$, by Theorem \ref{theoremsing4}. Moreover, $(L^{\infty},\ell^{p'})\subset L^{p'}$, $1<p'<\infty$ and $T$ extends on $L^{p'}$ (by Theorem \ref{theoremsingaj}), and hence $T$ is extendable on $(L^{\infty},\ell^{p'})$. Another justification of this fact is to remark that $(L^{\infty},\ell^{p'})\subset L^{\infty}$ and $T$ is well defined on $L^{\infty}$ (see \cite[Remark 4.1.18, p. 223]{LG2}). Without loss generality, we can assume that $1\leq r<p'$ is such that $r'>\max\left\{2,p\right\}$.  Now, consider a family $\left\{\Omega^j\right\}_{j\in\mathbb{Z}}$ of open subsets of $\mathbb{R}^d$ such that $\left\|\sum_{j\in\mathbb{Z}}2^{j\eta}\chi_{\Omega^j}\right\|_{\frac{1}{\eta},\frac{p}{\eta}}<\infty$. We have to show that 
\begin{eqnarray}
\sum_{j\in\mathbb{Z}}2^j\textit{O}(T(f),\Omega^j,r)\leq C\left\|f\right\|_{\infty,p'}\left\|\sum_{j\in\mathbb{Z}}2^{j\eta}\chi_{\Omega^j}\right\|_{\frac{1}{\eta},\frac{p}{\eta}}^{\frac{1}{\eta}}, \label{01applicadualqpconv}
\end{eqnarray}
where $C>0$ is a constant independent of $f$.

Consider the dual operator $T^\ast$ of $T$. The kernel $K^\ast$ associated to $T^\ast$ is given by $K^\ast(x)=K(-x)$, for all $x\in\mathbb{R}^d$, and $$\int_{\mathbb{R}^d}T(f)(x)g(x)dx=\int_{\mathbb{R}^d}f(x)T^\ast(g)(x)dx,$$ for all $f,g\in L^2$. Moreover, $T^\ast$ and $K^\ast$ satisfy the same kind of inequalities as do $T$ and $K$ (for more details, see \cite[Chap. 1, (35), p. 36 and Chap. 4, 4.1, pp. 155-156]{MA}). Hence $T^\ast$ extends to a bounded operator from $\mathcal{H}^{(1,p)}$ into $(L^1,\ell^p)$, according to Theorem \ref{theoremsing4}. 
 
Let $f\in(L^{\infty},\ell^{p'})$. Consider the subspace $\mathcal{H}_{fin}^{(1,p)}$ of $\mathcal{H}^{(1,p)}$ consisting of finite linear combinations of $(1,r',\delta)$-atoms. Then, for all elements $g$ of $\mathcal{H}_{fin}^{(1,p)}$, we have $g\in L^2\cap L^{p}$ (because $r'>\max\left\{2,p\right\}$ implies that $(1,r',\delta)$-atoms are $(1,2,\delta)$-atoms and $(1,p,\delta)$-atoms) and $T^\ast(g)\in(L^1,\ell^p)$ with $\left\|T^\ast(g)\right\|_{1,p}\leq C\left\|g\right\|_{\mathcal{H}^{(1,p)}}$, $C>0$ being a constant independent of $g$. Further, according to \cite[Chap. 1, (35), p. 36 or Chap. 4, 4.1, pp. 155-156]{MA}, 
\begin{align}
\int_{\mathbb{R}^d}T(f)(x)g(x)dx=\int_{\mathbb{R}^d}f(x)T^\ast(g)(x)dx, \label{020applicadualqpconv}
\end{align}
for all $g\in\mathcal{H}_{fin}^{(1,p)}$, since $f\in(L^{\infty},\ell^{p'})\subset L^{\infty}\cap L^{p'}$ and $g\in L^2\cap L^{p}$. Hence 
\begin{align}
\left|\int_{\mathbb{R}^d}T(f)(x)g(x)dx\right|&=\left|\int_{\mathbb{R}^d}f(x)T^\ast(g)(x)dx\right|\nonumber\\
&\leq\left\|f\right\|_{\infty,p'}\left\|T^\ast(g)\right\|_{1,p}\leq C\left\|f\right\|_{\infty,p'}\left\|g\right\|_{\mathcal{H}^{(1,p)}},\label{021applicadualqpconv}
\end{align}
for all $g\in\mathcal{H}_{fin}^{(1,p)}$. Consequently, the mapping $G_{T(f)}:\mathcal{H}_{fin}^{(1,p)}\ni g\mapsto\int_{\mathbb{R}^d}T(f)(x)g(x)dx$ extends to a unique continuous linear functional $\widetilde{G_{T(f)}}$ on $\mathcal{H}^{(1,p)}$, with  
\begin{align}
\left\|\widetilde{G_{T(f)}}\right\|:=\left\|\widetilde{G_{T(f)}}\right\|_{(\mathcal{H}^{(1,p)})^{\ast}}=\sup_{\underset{g\in\mathcal{H}^{(1,p)}}{\left\|g\right\|_{\mathcal{H}^{(1,p)}}}\leq1}\left|G_{T(f)}(g)\right|\leq C\left\|f\right\|_{\infty,p'}. \label{2applicadualqpconv}
\end{align}
Furthermore, since $\widetilde{G_{T(f)}}\in(\mathcal{H}^{(1,p)})^{\ast}$ and 
\begin{align}
\widetilde{G_{T(f)}}(g)=\int_{\mathbb{R}^d}T(f)(x)g(x)dx, \label{022applicadualqpconv}
\end{align}
for all $g\in\mathcal{H}_{fin}^{(1,p)}$, with $T(f)\in L_{\mathrm{loc}}^r$ (since $T(f)\in L^{p'}\subset L_{\mathrm{loc}}^{p'}$ and $1\leq r<p'$), by repeating the second part of the proof of Theorem \ref{theoremdualqp} (see \cite[Theorem 3.7]{AbFt3}) with $\widetilde{G_{T(f)}}$, $T(f)$ and $g$ respectively to the place of $T$, $g$ and $f$, we get  
\begin{align}
\sum_{j\in\mathbb{Z}}2^j\textit{O}(T(f),\Omega^j,r)\leq C\left\|\widetilde{G_{T(f)}}\right\|\left\|\sum_{j\in\mathbb{Z}}2^{j\eta}\chi_{\Omega^j}\right\|_{\frac{1}{\eta},\frac{p}{\eta}}^{\frac{1}{\eta}}, \label{023applicadualqpconv}
\end{align}
with $C>0$ a constant independent of $T(f)$. It follows from (\ref{2applicadualqpconv}) and (\ref{023applicadualqpconv}) that  
\begin{align*}
\sum_{j\in\mathbb{Z}}2^j\textit{O}(T(f),\Omega^j,r)\leq C\left\|f\right\|_{\infty,p'}\left\|\sum_{j\in\mathbb{Z}}2^{j\eta}\chi_{\Omega^j}\right\|_{\frac{1}{\eta},\frac{p}{\eta}}^{\frac{1}{\eta}}, 
\end{align*}
which establishes (\ref{01applicadualqpconv}). Hence $T(f)\in\mathcal{L}_{r,\phi_{1},\delta}^{(1,p,\eta)}$ with $\left\|T(f)\right\|_{\mathcal{L}_{r,\phi_{1},\delta}^{(1,p,\eta)}}\leq C\left\|f\right\|_{\infty,p'}$, which completes the proof. 
\end{proof}

\begin{remark}
We point out that the proof of Theorem \ref{applicadualqpconv2} can be given without distinguishing the cases $p=1$ and $p>1$. For this approach, we have not to show Inequality (\ref{01applicadualqpconv}), which is not necessarily meaningful when $p=1$, since for $p=1$, it is not clear that $T(f)\in L_{\mathrm{loc}}^r$, necessary condition to the definition of $\textit{O}(T(f),\Omega^j,r)$, $j\in\mathbb{Z}$. However, Relations (\ref{020applicadualqpconv}),  (\ref{021applicadualqpconv}), (\ref{2applicadualqpconv}) and (\ref{022applicadualqpconv}) are valid for $1\leq p<\infty$, except $T(f)$ is not necessarily in $L^{\infty}$ for $p=1$. Thus overcoming the problem of Inequality (\ref{01applicadualqpconv}), once to Relation (\ref{022applicadualqpconv}), we appeal to Theorem 2.7 (2) (see \cite[Theorem 3.8 (2)]{AbFt3}), which allows to claim that there exists a function $h\in\mathcal{L}_{r,\phi_{1},\delta}^{(1,p,\eta)}$ such that $\widetilde{G_{T(f)}}=\left(\widetilde{G_{T(f)}}\right)_h$; namely $\widetilde{G_{T(f)}}(g)=\int_{\mathbb{R}^d}h(x)g(x)dx$, for all $g\in\mathcal{H}_{fin}^{(1,p)}$, with $\left\|h\right\|_{\mathcal{L}_{r,\phi_{1},\delta}^{(1,p,\eta)}}\leq C\left\|\widetilde{G_{T(f)}}\right\|$. Hence, according to Relation (\ref{022applicadualqpconv}), we can claim that $T(f)$ can be identified with the function $h$ so that $\left\|T(f)\right\|_{\mathcal{L}_{r,\phi_{1},\delta}^{(1,p,\eta)}}=\left\|h\right\|_{\mathcal{L}_{r,\phi_{1},\delta}^{(1,p,\eta)}}\leq C\left\|\widetilde{G_{T(f)}}\right\|$. Finally, we obtain $\left\|T(f)\right\|_{\mathcal{L}_{r,\phi_{1},\delta}^{(1,p,\eta)}}\leq C\left\|f\right\|_{\infty,p'}$, by (\ref{2applicadualqpconv}).
\end{remark}

\begin{remark}
In Theorem \ref{applicadualqpconv2} (\ref{bisapplicattheo15}), the positive integer $\delta$ can be replaced by $0$. 
In fact, under the assumptions of Theorem \ref{applicadualqpconv2}, $\mathcal{L}_{r,\phi_{1},0}^{(1,p,\eta)}\cong\mathcal{L}_{r,\phi_{1},\delta}^{(1,p,\eta)}$ for $\delta>0$, by Theorem 2.7 (see \cite[Theorem 3.8]{AbFt3}).
\end{remark}

\begin{cor}\label{applicadualqpconv3}
The Riesz transforms $R_j$, $1\leq j\leq d$, are bounded from $(L^{\infty},\ell^{p'})$ into $\mathcal{L}_{r,\phi_{1},\delta}^{(1,p,\eta)}$, for $1\leq p<\infty$, $\delta\geq0$, $1\leq r<p'$ and, $0<\eta<1$ if $1<r<p'$ or $0<\eta\leq1$ if $r=1$.
\end{cor}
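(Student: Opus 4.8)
The plan is to deduce Corollary \ref{applicadualqpconv3} directly from Theorem \ref{applicadualqpconv2} by verifying that each Riesz transform $R_j$, $1\leq j\leq d$, is a convolution operator of the type covered there. Recall that $R_j$ is the convolution operator whose kernel is $K_j(x)=c_d\,x_j/|x|^{d+1}$ (a tempered distribution, in the principal-value sense of (\ref{applicattheo091})), with multiplier $\widehat{K_j}(\xi)=-i\,\xi_j/|\xi|$. So the first step is simply to record that $|\widehat{K_j}(\xi)|=|\xi_j|/|\xi|\leq1$, which gives assumption (\ref{applicattheo12}) with $A=1$.

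Next I would check the derivative bound (\ref{applicattheo15}). The function $K_j(x)=c_d\,x_j|x|^{-(d+1)}$ is homogeneous of degree $-d$ and $C^\infty$ on $\mathbb{R}^d\setminus\{0\}$, so each derivative $\partial^\beta K_j$ is homogeneous of degree $-d-|\beta|$ and smooth away from the origin; hence $|\partial^\beta K_j(x)|\leq B_\beta|x|^{-d-|\beta|}$ for all $x\neq0$, with $B_\beta:=\sup_{|x|=1}|\partial^\beta K_j(x)|<\infty$. Taking $\delta$ to be any fixed positive integer and $B:=\max_{|\beta|\leq\delta}B_\beta$, we obtain (\ref{applicattheo15}). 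Therefore, for any $1\leq p<\infty$, any $1\leq r<p'$, and $\eta$ in the stated range, Theorem \ref{applicadualqpconv2} applies to $K=K_j$ and yields that $R_j$ extends from $\mathcal{S}$ to a bounded operator from $(L^\infty,\ell^{p'})$ into $\mathcal{L}_{r,\phi_1,\delta}^{(1,p,\eta)}$, with $\|R_j f\|_{\mathcal{L}_{r,\phi_1,\delta}^{(1,p,\eta)}}\leq C\|f\|_{\infty,p'}$ for all $f\in(L^\infty,\ell^{p'})$. Finally, the restriction $\delta\geq0$ (rather than $\delta\geq1$) in the statement is accommodated by the preceding remark, which notes that $\mathcal{L}_{r,\phi_1,0}^{(1,p,\eta)}\cong\mathcal{L}_{r,\phi_1,\delta}^{(1,p,\eta)}$ for $\delta>0$ under these hypotheses, so the conclusion transfers to $\delta=0$ as well.

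There is essentially no obstacle here: the only thing to be slightly careful about is that $R_j$ is a priori only defined on nice classes (e.g. $\mathcal{S}$, or $L^2$), so one should invoke the extension mechanism built into Theorem \ref{applicadualqpconv2} itself rather than trying to define $R_j f$ pointwise for $f\in(L^\infty,\ell^{p'})$; the theorem already packages the boundedness estimate together with the extension, so once the two kernel hypotheses are checked the corollary is immediate. I would phrase the proof in two or three sentences: verify $|\widehat{K_j}|\leq1$, verify the homogeneity-based derivative estimates (\ref{applicattheo15}), then cite Theorem \ref{applicadualqpconv2} and the remark on $\delta$.

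\begin{proof}
For $1\leq j\leq d$, the Riesz transform $R_j$ is the convolution operator $T(f)=K_j\ast f$ whose kernel $K_j$ is the tempered distribution coinciding on $\mathbb{R}^d\setminus\{0\}$ with the locally integrable function $x\longmapsto c_d\,\dfrac{x_j}{|x|^{d+1}}$, and whose multiplier is $\widehat{K_j}(\xi)=-i\,\dfrac{\xi_j}{|\xi|}$. Hence $|\widehat{K_j}(\xi)|=\dfrac{|\xi_j|}{|\xi|}\leq1$ for all $\xi\neq0$, so assumption (\ref{applicattheo12}) holds with $A=1$. Moreover, $K_j$ is homogeneous of degree $-d$ and of class $\mathcal{C}^\infty$ on $\mathbb{R}^d\setminus\{0\}$, so for every multi-index $\beta$ the derivative $\partial^\beta K_j$ is homogeneous of degree $-d-|\beta|$ and smooth away from the origin; consequently $|\partial^\beta K_j(x)|\leq B_\beta|x|^{-d-|\beta|}$ for all $x\neq0$, with $B_\beta:=\sup_{|y|=1}|\partial^\beta K_j(y)|<\infty$. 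Fixing any integer $\delta>0$ and setting $B:=\max_{|\beta|\leq\delta}B_\beta$, we see that $K_j$ satisfies (\ref{applicattheo15}). Therefore, for $1\leq p<\infty$, $1\leq r<p'$ and $0<\eta<1$ if $1<r<p'$ or $0<\eta\leq1$ if $r=1$, Theorem \ref{applicadualqpconv2} applies to $K=K_j$ and shows that $R_j$ extends to a bounded operator from $(L^\infty,\ell^{p'})$ into $\mathcal{L}_{r,\phi_1,\delta}^{(1,p,\eta)}$ for $\delta>0$. Finally, since $\mathcal{L}_{r,\phi_1,0}^{(1,p,\eta)}\cong\mathcal{L}_{r,\phi_1,\delta}^{(1,p,\eta)}$ under these hypotheses (by Theorem \ref{theoremdualunifie}, see \cite[Theorem 3.8]{AbFt3}), the conclusion also holds for $\delta=0$. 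This completes the proof.
\end{proof}
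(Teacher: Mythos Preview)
Your proof is correct and follows exactly the route the paper intends: the corollary is stated without proof immediately after Theorem \ref{applicadualqpconv2} and the remark that $\mathcal{L}_{r,\phi_1,0}^{(1,p,\eta)}\cong\mathcal{L}_{r,\phi_1,\delta}^{(1,p,\eta)}$, so verifying that the Riesz kernels satisfy (\ref{applicattheo12}) and (\ref{applicattheo15}) and then invoking that theorem and remark is precisely what is expected.
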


\begin{thm}\label{applicadualqpconv4general}
Let $K$ be a tempered distribution in $\mathbb{R}^d$ that coincides with a locally integrable function on $\mathbb{R}^d\backslash\left\{0\right\}$ and satisfies assumptions (\ref{applicattheo12}) and (\ref{applicattheo15}). Suppose that $\frac{d}{d+\delta}<q\leq1<p<\infty$. Let $1\leq r<p'$ and, $0<\eta<q$ if $1<r<p'$ or $0<\eta\leq1$ if $r=1$. 

Then the operator $T(f)=K\ast f$, for all $f\in\mathcal{S}$, is extendable on $\mathcal{L}_{r,\phi_{1},\delta}^{(q,p,\eta)}$ and there exists a constant $C>0$ such that 
\begin{align}
\left\|T(f)\right\|_{\mathcal{L}_{r,\phi_{1},\delta}^{(q,p,\eta)}}\leq C\left\|f\right\|_{\mathcal{L}_{r,\phi_{1},\delta}^{(q,p,\eta)}},  \label{0applicadualqpconv4general0}
\end{align}
for all $f\in\mathcal{L}_{r,\phi_{1},\delta}^{(q,p,\eta)}$.  
\end{thm}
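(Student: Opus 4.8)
The plan is to mimic the structure of the proof of Theorem \ref{applicadualqpconv2}, using duality against the predual $\mathcal{H}^{(q,p)}$ and the finite atomic space $\mathcal{H}_{fin}^{(q,p)}$, but now feeding the stronger hypotheses $\frac{d}{d+\delta}<q\le1$ into Theorem \ref{theoremsing5} rather than Theorem \ref{theoremsing4}. First I would fix $f\in\mathcal{L}_{r,\phi_{1},\delta}^{(q,p,\eta)}$ and, by Theorem \ref{theoremdualqp}, identify it with the continuous functional $T_f$ on $\mathcal{H}^{(q,p)}$ given by integration against $f$ on $\mathcal{H}_{fin}^{(q,p)}$. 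The natural candidate for $T(f)$ is the functional $g\mapsto\int_{\mathbb{R}^d}T(f)(x)g(x)\,dx$ on $\mathcal{H}_{fin}^{(q,p)}$, which by the same self-adjointness computation used in \eqref{020applicadualqpconv}--\eqref{021applicadualqpconv} equals $g\mapsto\int_{\mathbb{R}^d}f(x)T^\ast(g)(x)\,dx$, where $T^\ast$ is convolution with $K^\ast(x)=K(-x)$; since $K^\ast$ satisfies the same estimates \eqref{applicattheo12} and \eqref{applicattheo15} as $K$, Theorem \ref{theoremsing5} gives that $T^\ast$ maps $\mathcal{H}^{(q,p)}$ boundedly into $\mathcal{H}^{(q,p)}$.

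With this in hand the key chain of estimates is: for $g\in\mathcal{H}_{fin}^{(q,p)}$ (taking $r'>\max\{2,p\}$ as in the previous proof so that $(q,r',\delta)$-atoms are also $L^2$- and $L^p$-controlled, hence the pairing makes classical sense),
\begin{align*}
\left|\int_{\mathbb{R}^d}T(f)(x)g(x)\,dx\right|=\left|\int_{\mathbb{R}^d}f(x)T^\ast(g)(x)\,dx\right|=\left|T_f\big(T^\ast(g)\big)\right|\le\left\|f\right\|_{\mathcal{L}_{r,\phi_{1},\delta}^{(q,p,\eta)}}\left\|T^\ast(g)\right\|_{\mathcal{H}^{(q,p)}}\le C\left\|f\right\|_{\mathcal{L}_{r,\phi_{1},\delta}^{(q,p,\eta)}}\left\|g\right\|_{\mathcal{H}^{(q,p)}},
\end{align*}
using Theorem \ref{theoremdualqp}(1) (or the isomorphism of Theorem \ref{theoremdualunifie}) in the middle inequality and the $\mathcal{H}^{(q,p)}\to\mathcal{H}^{(q,p)}$ boundedness of $T^\ast$ in the last. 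This shows the functional $G_{T(f)}:g\mapsto\int T(f)g$ is bounded on $\mathcal{H}_{fin}^{(q,p)}$ with norm $\lsim\left\|f\right\|_{\mathcal{L}_{r,\phi_{1},\delta}^{(q,p,\eta)}}$, hence extends to $\widetilde{G_{T(f)}}\in(\mathcal{H}^{(q,p)})^\ast$. Then I would invoke Theorem \ref{theoremdualqp}(2): there is $h\in\mathcal{L}_{r,\phi_{1},\delta}^{(q,p,\eta)}$ with $\widetilde{G_{T(f)}}=T_h$ and $\left\|h\right\|_{\mathcal{L}_{r,\phi_{1},\delta}^{(q,p,\eta)}}\lsim\left\|\widetilde{G_{T(f)}}\right\|\lsim\left\|f\right\|_{\mathcal{L}_{r,\phi_{1},\delta}^{(q,p,\eta)}}$, and since $\widetilde{G_{T(f)}}$ and $T_h$ agree on $\mathcal{H}_{fin}^{(q,p)}$ we identify $T(f)$ with $h$ in $\mathcal{L}_{r,\phi_{1},\delta}^{(q,p,\eta)}/\mathcal{P}_\delta$, giving \eqref{0applicadualqpconv4general0}. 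This is exactly the device used in the Remark following Theorem \ref{applicadualqpconv2} to avoid having to directly verify the defining inequality \eqref{dualqp}, and it sidesteps the fact that for $q<1$ one cannot as easily argue $T(f)\in L_{\mathrm{loc}}^r$ a priori.

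The main obstacle I anticipate is making rigorous sense of the pairing $\int_{\mathbb{R}^d}T(f)(x)g(x)\,dx$ and of the self-adjointness identity when $f$ is only an element of the Campanato-type space $\mathcal{L}_{r,\phi_{1},\delta}^{(q,p,\eta)}$ (so $f$ may grow and $T(f)=K\ast f$ must be interpreted as a distribution modulo polynomials, as in the classical $BMO$ theory where singular integrals are defined with a subtracted Taylor polynomial of the kernel). The clean way around this is to never work with $T(f)$ itself but only with the functional $\widetilde{G_{T(f)}}$, defined purely through the absolutely convergent pairing $\int f\cdot T^\ast(g)$ for $g\in\mathcal{H}_{fin}^{(q,p)}$ — here $T^\ast(g)\in\mathcal{H}^{(q,p)}\subset(L^q,\ell^p)$ and $f\in\mathcal{L}_{r,\phi_{1},\delta}^{(q,p,\eta)}\hookrightarrow\mathcal{L}_{r,\phi_2,\delta}=(\mathcal{H}^q)^\ast$ by \eqref{dualqp3bis}, so the pairing is legitimate once one checks $T^\ast(g)$ has the requisite cancellation/atomic structure, which follows since $T^\ast$ maps atoms into fixed multiples of $\mathcal{H}^{(q,p)}$ molecules. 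A secondary point to check is that the resulting identification of $T(f)$ is independent of the auxiliary choice of $r$ with $r'>\max\{2,p\}$, which follows from the norm equivalences \eqref{dualqp3}--\eqref{dualqp3bisbis} and the density of $\mathcal{H}_{fin}^{(q,p)}$; I would dispatch this briefly and refer back to the analogous remarks after Theorem \ref{applicadualqpconv2}.
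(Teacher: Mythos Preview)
Your overall strategy---dualize, use that $T^\ast$ is bounded $\mathcal{H}^{(q,p)}\to\mathcal{H}^{(q,p)}$ by Theorem~\ref{theoremsing5}, and read off membership of $T(f)$ in $\mathcal{L}_{r,\phi_1,\delta}^{(q,p,\eta)}$ from Theorem~\ref{theoremdualqp}---is exactly the paper's. The difference lies in how the central identity
\[
\int_{\mathbb{R}^d} f(x)\,T^\ast(g)(x)\,dx \;=\; \widetilde{T_f}\big(T^\ast(g)\big),\qquad g\in\mathcal{H}_{fin}^{(q,p)},
\]
is justified, and this is precisely where your sketch has a gap. You write this equality directly in your chain of inequalities, and in your ``obstacle'' paragraph propose to legitimize it via $f\in\mathcal{L}_{r,\phi_2,\delta}\cong(\mathcal{H}^q)^\ast$ and the molecular structure of $T^\ast(g)$. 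But $\mathcal{H}^{(q,p)}$ is not contained in $\mathcal{H}^q$ when $p>q$, so the $(\mathcal{H}^q)^\ast$--pairing does not apply to $T^\ast(g)$; and ``$\mathcal{H}^{(q,p)}$ molecules'' are not introduced in this paper, so the molecular route would need to be set up from scratch.

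The paper instead exploits the hypothesis $p>1$ in a way you do not mention: by Remark~\ref{remarqedualeqp1}, $\mathcal{L}_{r,\phi_1,\delta}^{(q,p,\eta)}\hookrightarrow L^{p'}$, so $f\in L^{p'}$ and $T(f)\in L^{p'}$ are honest functions, $g\in L^p$, and both integrals $\int T(f)g$ and $\int f\,T^\ast(g)$ are ordinary $L^{p'}$--$L^p$ pairings (this also gives $T(f)\in L^r_{\mathrm{loc}}$ for free). The remaining nontrivial step is to show that the \emph{integral} $\int f\,T^\ast(g)$ coincides with the \emph{duality} value $\widetilde{T_f}(T^\ast(g))$; since $T^\ast(g)\notin\mathcal{H}_{fin}^{(q,p)}$ in general, this is not automatic. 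The paper handles it by taking the atomic decomposition $T^\ast(g)=\sum\lambda_{j,n}a_{j,n}$ in $\mathcal{H}^{(q,p)}$, proving via the Fefferman--Stein vector-valued maximal inequality that $\sum|\lambda_{j,n}a_{j,n}f|\in L^1$ (using $f\in L^{p'}$ and $T^\ast(g)\in L^p$), applying Fubini, and then using continuity of $\widetilde{T_f}$. Your proposal should either incorporate this argument or supply a genuine substitute; as written, the equality $\int f\,T^\ast(g)=T_f(T^\ast(g))$ is asserted rather than proved.
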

\begin{proof} 
Note that $T$ extends to a bounded operator from $\mathcal{H}^{(q,p)}$ into $\mathcal{H}^{(q,p)}$, by Theorem \ref{theoremsing5}. Moreover, $\mathcal{L}_{r,\phi_{1},\delta}^{(q,p,\eta)}\subset L^{p'}$, $1<p'<\infty$, by Remark \ref{remarqedualeqp1}, and $T$ extends on $L^{p'}$ (by Theorem \ref{theoremsingaj}), and hence $T$ is extendable on $\mathcal{L}_{r,\phi_{1},\delta}^{(q,p,\eta)}$. Without loss generality, we can assume that $1\leq r<p'$ is such that $r'>\max\left\{2,p\right\}$.  Now, consider a family $\left\{\Omega^j\right\}_{j\in\mathbb{Z}}$ of open subsets of $\mathbb{R}^d$ such that $\left\|\sum_{j\in\mathbb{Z}}2^{j\eta}\chi_{\Omega^j}\right\|_{\frac{1}{\eta},\frac{p}{\eta}}<\infty$. We have to show that 
\begin{align}
\sum_{j\in\mathbb{Z}}2^j\textit{O}(T(f),\Omega^j,r)\leq C\left\|f\right\|_{\mathcal{L}_{r,\phi_{1},\delta}^{(q,p,\eta)}}\left\|\sum_{j\in\mathbb{Z}}2^{j\eta}\chi_{\Omega^j}\right\|_{\frac{q}{\eta},\frac{p}{\eta}}^{\frac{1}{\eta}}, \label{3applicadualqpconvgener}
\end{align}
where $C>0$ is a constant independent of $f$.

Consider the dual operator $T^\ast$ of $T$. Then $T^\ast$ extends to a bounded operator from $\mathcal{H}^{(q,p)}$ to itself, by Theorem \ref{theoremsing5}.  
 
Let $f\in\mathcal{L}_{r,\phi_{1},\delta}^{(q,p,\eta)}$. Consider the subspace $\mathcal{H}_{fin}^{(q,p)}$ of $\mathcal{H}^{(q,p)}$ consisting of finite linear combinations of $(q,r',\delta)$-atoms. Then, for all elements $g$ of $\mathcal{H}_{fin}^{(q,p)}$, we have $T^\ast(g)\in\mathcal{H}^{(q,p)}$ with $\left\|T^\ast(g)\right\|_{\mathcal{H}^{(q,p)}}\leq C\left\|g\right\|_{\mathcal{H}^{(q,p)}}$, $C>0$ being a constant independent of $g$. Moreover, for all $g\in\mathcal{H}_{fin}^{(q,p)}$, we have $$\int_{\mathbb{R}^d}T(f)(x)g(x)dx=\int_{\mathbb{R}^d}f(x)T^\ast(g)(x)dx,$$ according to \cite[Chap. 1, (35), p. 36]{MA}, because $f\in\mathcal{L}_{r,\phi_{1},\delta}^{(q,p,\eta)}\subset L^{p'}$ (by Remark \ref{remarqedualeqp1}) and $g\in L^{p}$ (since $(q,r',\delta)$-atoms are also $(q,p,\delta)$-atoms given that $r'>\max\left\{2,p\right\}$), and hence $T(f)\in L^{p'}$ and $T^\ast(g)\in L^{p}$, and 
\begin{align}
\left|\int_{\mathbb{R}^d}f(x)T^\ast(g)(x)dx\right|\leq C\left\|f\right\|_{\mathcal{L}_{r,\phi_{1},\delta}^{(q,p,\eta)}}\left\|T^\ast(g)\right\|_{\mathcal{H}^{(q,p)}}, \label{4applicadualqpconvgener}
\end{align}
with $C>0$ a constant independent of $f$ and $g$ (we admit for the moment this inequality). Hence   
\begin{align}
\left|\int_{\mathbb{R}^d}T(f)(x)g(x)dx\right|&=\left|\int_{\mathbb{R}^d}f(x)T^\ast(g)(x)dx\right| \nonumber\\
&\leq C\left\|f\right\|_{\mathcal{L}_{r,\phi_{1},\delta}^{(q,p,\eta)}}\left\|T^\ast(g)\right\|_{\mathcal{H}^{(q,p)}}\leq C\left\|f\right\|_{\mathcal{L}_{r,\phi_{1},\delta}^{(q,p,\eta)}}\left\|g\right\|_{\mathcal{H}^{(q,p)}} \nonumber, 
\end{align}
for all $g\in\mathcal{H}_{fin}^{(q,p)}$. Consequently, the mapping $G_{T(f)}:\mathcal{H}_{fin}^{(q,p)}\ni g\mapsto\int_{\mathbb{R}^d}T(f)(x)g(x)dx$ extends to a unique continuous linear functional $\widetilde{G_{T(f)}}$ on $\mathcal{H}^{(q,p)}$, with  
\begin{align}
\left\|\widetilde{G_{T(f)}}\right\|:=\left\|\widetilde{G_{T(f)}}\right\|_{(\mathcal{H}^{(q,p)})^{\ast}}=\sup_{\underset{g\in\mathcal{H}^{(q,p)}}{\left\|g\right\|_{\mathcal{H}^{(q,p)}}}\leq1}\left|G_{T(f)}(g)\right|\leq C\left\|f\right\|_{\mathcal{L}_{r,\phi_{1},\delta}^{(q,p,\eta)}}. \label{5applicadualqpconvgener}
\end{align}
Furthermore, since $\widetilde{G_{T(f)}}\in(\mathcal{H}^{(q,p)})^{\ast}$, $$\widetilde{G_{T(f)}}(g)=\int_{\mathbb{R}^d}T(f)(x)g(x)dx,$$ for all $g\in\mathcal{H}_{fin}^{(q,p)}$, and $T(f)\in L_{\mathrm{loc}}^r$ (because $r<p'$ implies that $T(f)\in L^{p'}\subset L_{\mathrm{loc}}^{p'}\subset L_{\mathrm{loc}}^r$), by repeating the second part of the proof of Theorem \ref{theoremdualqp}, with $\widetilde{G_{T(f)}}$, $T(f)$ and $g$ respectively to the place of $T$, $g$ and $f$, we get 
\begin{align*}
\sum_{j\in\mathbb{Z}}2^j\textit{O}(T(f),\Omega^j,r)\leq C\left\|\widetilde{G_{T(f)}}\right\|\left\|\sum_{j\in\mathbb{Z}}2^{j\eta}\chi_{\Omega^j}\right\|_{\frac{q}{\eta},\frac{p}{\eta}}^{\frac{1}{\eta}}.
\end{align*}
It follows that  
\begin{align*}
\sum_{j\in\mathbb{Z}}2^j\textit{O}(T(f),\Omega^j,r)\leq C\left\|f\right\|_{\mathcal{L}_{r,\phi_{1},\delta}^{(q,p,\eta)}}\left\|\sum_{j\in\mathbb{Z}}2^{j\eta}\chi_{\Omega^j}\right\|_{\frac{q}{\eta},\frac{p}{\eta}}^{\frac{1}{\eta}},
\end{align*}
by (\ref{5applicadualqpconvgener}). Hence $T(f)\in\mathcal{L}_{r,\phi_{1},\delta}^{(q,p,\eta)}$ with $\left\|T(f)\right\|_{\mathcal{L}_{r,\phi_{1},\delta}^{(q,p,\eta)}}\leq C\left\|f\right\|_{\mathcal{L}_{r,\phi_{1},\delta}^{(q,p,\eta)}}$.\\ 

The proof of Theorem \ref{applicadualqpconv4general} will be complete if we prove (\ref{4applicadualqpconvgener}). Since $f\in\mathcal{L}_{r,\phi_{1},\delta}^{(q,p,\eta)}$, we know that the mapping $G_f:\mathcal{H}_{fin}^{(q,p)}\ni g\mapsto\int_{\mathbb{R}^d}f(x)g(x)dx$ extends to a unique continuous linear functional $\widetilde{G_f}$ on $\mathcal{H}^{(q,p)}$, with 
\begin{align}
|\widetilde{G_f}(g)|=|G_f(g)|\leq C\left\|f\right\|_{\mathcal{L}_{r,\phi_{1},\delta}^{(q,p,\eta)}}\left\|g\right\|_{\mathcal{H}^{(q,p)}},  \label{050applicadualqpconvgener}
\end{align}
for all $g\in\mathcal{H}_{fin}^{(q,p)}$. We also know that, for all $g\in\mathcal{H}_{fin}^{(q,p)}$, $T^\ast(g)\in\mathcal{H}^{(q,p)}$. However, it is not clear that $T^\ast(g)\in\mathcal{H}_{fin}^{(q,p)}$, for all $g\in\mathcal{H}_{fin}^{(q,p)}$. Hence we can not write $$\widetilde{G_f}(T^\ast(g))=G_f(T^\ast(g))=\int_{\mathbb{R}^d}f(x)T^\ast(g)(x)dx,$$ for all $g\in\mathcal{H}_{fin}^{(q,p)}$, and deduce (\ref{4applicadualqpconvgener}), according to (\ref{050applicadualqpconvgener}). However, we claim that 
\begin{align}
\widetilde{G_f}(T^\ast(g))=\int_{\mathbb{R}^d}f(x)T^\ast(g)(x)dx, \label{051applicadualqpconvgener}
\end{align}
for all $g\in\mathcal{H}_{fin}^{(q,p)}$. To see (\ref{051applicadualqpconvgener}), let $g\in\mathcal{H}_{fin}^{(q,p)}$. We have $T^\ast(g)\in\mathcal{H}^{(q,p)}\cap L^p$ (because $g\in L^p$ and $T^\ast$ is bounded from $L^{p}$ to itself, but also from $\mathcal{H}^{(q,p)}$ to itself). Therefore, by the proof of \cite[Theorem 4.4, pp. 1916-1919]{AbFt}, there exist a family $\left\{\left(a_{j,n},Q_{j,n}\right)\right\}_{(j,n)\in\mathbb{Z}\times\mathbb{Z_{+}}}$ of elements of $\mathcal{A}(q,r',\delta)$ and a family of scalars $\left\{\lambda_{j,n}\right\}_{(j,n)\in\mathbb{Z}\times\mathbb{Z_{+}}}$ such that 
\begin{align}
T^\ast(g)=\sum_{j=-\infty}^{+\infty}\sum_{n\geq 0}\lambda_{j,n}a_{j,n} 
\label{applicattheoqp1gener}
\end{align} 
almost everywhere and in the sense of $\mathcal{H}^{(q,p)}$ (unconditionally). Furthermore,
\begin{align}
\sum_{j=-\infty}^{+\infty}\sum_{n\geq 0}|\lambda_{j,n}a_{j,n}f|\in L^1. \label{applicattheoqp5gener}
\end{align}
For the proof of (\ref{applicattheoqp5gener}), first we recall that, by construction (see the proof of \cite[Theorem 4.4, pp. 1916-1919]{AbFt}), $|\lambda_{j,n}a_{j,n}|\leq C_1 2^j$ almost everywhere, $\text{supp}(a_{j,n})\subset Q_{j,n}:=C_0 Q_{j,n}^{\ast}$ and $\underset{n\geq 0}\sum\chi_{Q_{j,n}^{\ast}}\leq K(d)$ with, for every $j\in\mathbb{Z}$, $\underset{n\geq 0}\bigcup Q_{j,n}^{\ast}=\mathcal{O}^j:=\left\{x\in\mathbb R^d:\mathcal M_{\mathcal F_N^0}(T^\ast(g))(x)>2^j\right\}$, $N\geq\max\left\{\left\lfloor \frac{d}{q}\right\rfloor, \left\lfloor \frac{d}{p}\right\rfloor\right\}+1$ being an integer and $\mathcal M_{\mathcal F_N^0}(T^\ast(g))$ is the radial grand maximal function of $T^\ast(g)$ (with respect to $\mathcal F_N$) (see \cite{AbFt}, p. 1907, for the definitions of $\mathcal F_N$ and $\mathcal M_{\mathcal F_N^0}(T^\ast(g))$). Thus, 
\begin{align*}
\left\|\sum_{j=-\infty}^{+\infty}\sum_{n\geq 0}|\lambda_{j,n}a_{j,n}f|\right\|_1&=\left\||f|\sum_{j=-\infty}^{+\infty}\sum_{n\geq 0}|\lambda_{j,n}a_{j,n}|\right\|_1\\
&\leq\left\|f\right\|_{L^{p'}}\left\|\sum_{j=-\infty}^{+\infty}\sum_{n\geq 0}|\lambda_{j,n}a_{j,n}|\right\|_{L^p}\\
&\leq C_1\left\|f\right\|_{L^{p'}}\left\|\sum_{j=-\infty}^{+\infty}\sum_{n\geq 0}2^j\chi_{Q_{j,n}}\right\|_{L^p}\\
&\leq C(\varphi,d,N,\delta)\left\|f\right\|_{L^{p'}}\left\|\sum_{j=-\infty}^{+\infty}\sum_{n\geq 0}2^j\left[\mathfrak{M}\left(\chi_{Q_{j,n}^{\ast}}\right)\right]^2\right\|_{L^p}\\
&=C(\varphi,d,N,\delta)\left\|f\right\|_{L^{p'}}\left\|\left(\sum_{j=-\infty}^{+\infty}\sum_{n\geq 0}2^j\left[\mathfrak{M}\left(\chi_{Q_{j,n}^{\ast}}\right)\right]^2\right)^{\frac{1}{2}}\right\|_{L^{2p}}^2\\
&\leq C(\varphi,d,p,N,\delta)\left\|f\right\|_{L^{p'}}\left\|\left(\sum_{j=-\infty}^{+\infty}\sum_{n\geq 0}2^j\left(\chi_{Q_{j,n}^{\ast}}\right)^2\right)^{\frac{1}{2}}\right\|_{L^{2p}}^2\\
&=C(\varphi,d,p,N,\delta)\left\|f\right\|_{L^{p'}}\left\|\sum_{j=-\infty}^{+\infty}\sum_{n\geq 0}2^j\chi_{Q_{j,n}^{\ast}}\right\|_{L^p}\\
&\leq C(\varphi,d,p,N,\delta)\left\|f\right\|_{L^{p'}}\left\|\sum_{j=-\infty}^{+\infty}2^j\chi_{\mathcal{O}^j}\right\|_{L^p}\\
&\leq C(\varphi,d,p,N,\delta)\left\|f\right\|_{L^{p'}}\left\|\mathcal M_{\mathcal F_N^0}(T^\ast(g))\right\|_{L^p},
\end{align*}
by H\"older inequality, \cite[Lemma 3.3]{AbFt3}, \cite[Theorem 1, p. 107]{CFES} and \cite[(4.18), p. 1919]{AbFt}, where $\mathfrak{M}$ denotes the Hardy-Littlewood maximal operator, defined for a locally integrable function $f$ by 
\begin{align*}
\mathfrak{M}(f)(x):=\underset{r>0}\sup\ |B(x,r)|^{-1}\int_{B(x,r)}|f(y)|dy,\ \ x\in\mathbb{R}^d. 
\end{align*}  
But, since $p>1$, $N\geq\max\left\{\left\lfloor\frac{d}{q}\right\rfloor,\left\lfloor\frac{d}{p}\right\rfloor\right\}+1=\left\lfloor\frac{d}{q}\right\rfloor+1>\left\lfloor\frac{d}{p}\right\rfloor+1$ and $T^\ast(g)\in L^{p}$, we have 
\begin{align*}
\left\|\mathcal M_{\mathcal F_N^0}(T^\ast(g))\right\|_{L^p}\approx\left\|T^\ast(g)\right\|_{L^p}<\infty, 
\end{align*}
by \cite[Remark, pp. 15-16]{MBOW}. Hence 
\begin{align*}
\left\|\sum_{j=-\infty}^{+\infty}\sum_{n\geq 0}|\lambda_{j,n}a_{j,n}f|\right\|_1\lsim\left\|f\right\|_{L^{p'}}\left\|T^\ast(g)\right\|_{L^p}<\infty,
\end{align*}
which states (\ref{applicattheoqp5gener}). From (\ref{applicattheoqp1gener}) and (\ref{applicattheoqp5gener}), it follows that 
\begin{align*}
\int_{\mathbb{R}^d}f(x)T^\ast(g)(x)dx&=\int_{\mathbb{R}^d}f(x)\left(\sum_{j=-\infty}^{+\infty}\sum_{n\geq 0}\lambda_{j,n}a_{j,n}(x)\right)dx\\
&=\int_{\mathbb{R}^d}\sum_{j=-\infty}^{+\infty}\sum_{n\geq 0}(\lambda_{j,n}a_{j,n}(x)f(x))dx\\
&=\sum_{j=-\infty}^{+\infty}\sum_{n\geq 0}\lambda_{j,n}\int_{\mathbb{R}^d}f(x)a_{j,n}(x)dx,
\end{align*}
by Fubini Theorem. Moreover, 
\begin{align*}
\sum_{j=-\infty}^{+\infty}\sum_{n\geq 0}\lambda_{j,n}\int_{\mathbb{R}^d}f(x)a_{j,n}(x)dx&=\sum_{j=-\infty}^{+\infty}\sum_{n\geq 0}\lambda_{j,n}G_f(a_{j,n})\\
&=\sum_{j=-\infty}^{+\infty}\sum_{n\geq 0}\lambda_{j,n}\widetilde{G_f}(a_{j,n})\\
&=\widetilde{G_f}\left(\sum_{j=-\infty}^{+\infty}\sum_{n\geq 0}\lambda_{j,n}a_{j,n}\right)=\widetilde{G_f}(T^\ast(g)),
\end{align*}
since $T^\ast(g)=\sum_{j=-\infty}^{+\infty}\sum_{n\geq 0}\lambda_{j,n}a_{j,n}$ (unconditionally) in $\mathcal{H}^{(q,p)}$ and $\widetilde{G_f}$ is a continuous linear functional on $\mathcal{H}^{(q,p)}$. Therefore, 
\begin{align*}
\int_{\mathbb{R}^d}f(x)T^\ast(g)(x)dx=\widetilde{G_f}(T^\ast(g)),
\end{align*}
which establishes (\ref{051applicadualqpconvgener}). 

It follows that 
\begin{align*}
\left|\int_{\mathbb{R}^d}f(x)T^\ast(g)(x)dx\right|&=\left|\widetilde{G_f}(T^\ast(g))\right|\\
&\leq\left\|\widetilde{G_f}\right\|\left\|T^\ast(g)\right\|_{\mathcal{H}^{(q,p)}}\leq C\left\|f\right\|_{\mathcal{L}_{r,\phi_{1},\delta}^{(q,p,\eta)}}\left\|T^\ast(g)\right\|_{\mathcal{H}^{(q,p)}},
\end{align*}
which establishes (\ref{4applicadualqpconvgener}), and hence completes the proof of Theorem \ref{applicadualqpconv4general}.
\end{proof}

\begin{remark}\label{0applicadualqpconv4generalbis0}
In Theorem \ref{applicadualqpconv4general} (\ref{0applicadualqpconv4general0}), the positive integer $\delta$ can be replaced by $0$ provided that $\left\lfloor d\left(\frac{1}{q}-1\right)\right\rfloor=0$. In fact, under the assumptions of Theorem \ref{applicadualqpconv4general}, $\mathcal{L}_{r,\phi_{1},0}^{(q,p,\eta)}\cong\mathcal{L}_{r,\phi_{1},\delta}^{(q,p,\eta)}$ for $\delta>0$ provided that $\left\lfloor d\left(\frac{1}{q}-1\right)\right\rfloor=0$, by Theorem 2.7 (see \cite[Theorem 3.8]{AbFt3}).
\end{remark}

\begin{cor}\label{applicadualqpconv4generalbis}
Let $K$ be a tempered distribution in $\mathbb{R}^d$ that coincides with a locally integrable function on $\mathbb{R}^d\backslash\left\{0\right\}$ and satisfies assumptions (\ref{applicattheo12}) and (\ref{applicattheo15}). Suppose that $\frac{d}{d+\delta}<q\leq1$ and $q\leq p<\infty$. Let $\max\left\{1,p\right\}<r\leq\infty$ and, $0<\eta<q$ if $r<\infty$ or $0<\eta\leq1$ if $r=\infty$. 

Then the operator $T(f)=K\ast f$, for all $f\in\mathcal{S}$, is extendable on $\mathcal{L}_{r',\phi_{1},\delta}^{(q,p,\eta)}$ and there exists a constant $C>0$ such that
\begin{eqnarray*}
\left\|T(f)\right\|_{\mathcal{L}_{r',\phi_{1},\delta}^{(q,p,\eta)}}\leq C\left\|f\right\|_{\mathcal{L}_{r',\phi_{1},\delta}^{(q,p,\eta)}},
\end{eqnarray*}
for all $f\in\mathcal{L}_{r',\phi_{1},\delta}^{(q,p,\eta)}$.  
\end{cor}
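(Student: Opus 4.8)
The approach is to distinguish the ranges $p>1$ and $p\le 1$. For $1<p<\infty$ there is nothing new: the statement is Theorem \ref{applicadualqpconv4general} verbatim once one interchanges $r$ and $r'$, because $\max\{1,p\}<r\le\infty$ is the same as $1\le r'<p'$ and the condition imposed on $\eta$ there ($0<\eta<q$ when $r<\infty$, i.e. $r'>1$; $0<\eta\le 1$ when $r=\infty$, i.e. $r'=1$) is exactly the present one. So I would dispose of that range by a single reference and then assume $q\le p\le 1$, where the hypothesis reads $1<r\le\infty$, hence $1\le r'<\infty$, and where $\eta\le 1$, so that (\ref{dualqp3bisbis}) identifies $\mathcal{L}_{r',\phi_1,\delta}^{(q,p,\eta)}$ with the Campanato space $\mathcal{L}_{r',\phi_1,\delta}$ (equivalent norms, and independent of $r'\in[1,\infty)$ and of $\eta\in(0,1]$), which by Theorem \ref{theoremdualunifie} is $\left(\mathcal{H}^{(q,p)}\right)^{\ast}$.

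For $q\le p\le 1$ I would then follow the scheme of the proof of Theorem \ref{applicadualqpconv4general}. Since all the spaces in play are mutually isomorphic, I would first fix $r'>2$, so that every $(q,r',\delta)$-atom is also a $(q,2,\delta)$- and a $(q,p,\delta)$-atom. The adjoint kernel $K^{\ast}(x)=K(-x)$ satisfies (\ref{applicattheo12}) and (\ref{applicattheo15}) with the same constants, hence $T^{\ast}(g)=K^{\ast}\ast g$ is bounded from $\mathcal{H}^{(q,p)}$ into itself by Theorem \ref{theoremsing5}. Given $f\in\mathcal{L}_{r',\phi_1,\delta}^{(q,p,\eta)}$, the map $g\mapsto\int_{\mathbb{R}^d}f(x)g(x)\,dx$ on $\mathcal{H}_{fin}^{(q,p)}$ extends (Theorem \ref{theoremdualunifie}) to $\widetilde{G_f}\in\left(\mathcal{H}^{(q,p)}\right)^{\ast}$ with $\|\widetilde{G_f}\|\lsim\|f\|_{\mathcal{L}_{r',\phi_1,\delta}^{(q,p,\eta)}}$; I would set $\widetilde{G_{T(f)}}:=\widetilde{G_f}\circ T^{\ast}\in\left(\mathcal{H}^{(q,p)}\right)^{\ast}$ and use Theorem \ref{theoremdualunifie}(2) to produce $h\in\mathcal{L}_{r',\phi_1,\delta}^{(q,p,\eta)}$ representing $\widetilde{G_{T(f)}}$, with $\|h\|_{\mathcal{L}_{r',\phi_1,\delta}^{(q,p,\eta)}}\lsim\|\widetilde{G_{T(f)}}\|\lsim\|f\|_{\mathcal{L}_{r',\phi_1,\delta}^{(q,p,\eta)}}$. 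Defining $T(f):=h$ then yields both the extendability and the required bound, provided $h$ really represents $K\ast f$ modulo $\mathcal{P}_\delta$.

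That last point is the analogue of (\ref{051applicadualqpconvgener}): I have to check $\widetilde{G_f}(T^{\ast}(g))=\int_{\mathbb{R}^d}(K\ast f)(x)g(x)\,dx$ for $g\in\mathcal{H}_{fin}^{(q,p)}$, by decomposing $T^{\ast}(g)=\sum_{j,n}\lambda_{j,n}a_{j,n}$ atomically and unconditionally in $\mathcal{H}^{(q,p)}$ through \cite[Theorem 4.4]{AbFt} (with $|\lambda_{j,n}a_{j,n}|\lsim 2^j$, $\sum_n\chi_{Q_{j,n}^{\ast}}\le K(d)$, $\bigcup_n Q_{j,n}^{\ast}=\mathcal{O}^j$), applying Fubini, and invoking the continuity of $\widetilde{G_f}$. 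The only step that does not transcribe verbatim from the $p>1$ case is the $L^1$-summability $\sum_{j,n}\|\lambda_{j,n}a_{j,n}f\|_1<\infty$, which there was a one-line consequence of H\"older's inequality in $L^{p'}\times L^p$; for $p\le 1$ I would instead use that $f\in\mathcal{L}_{r',\phi_1,\delta}\subset\mathcal{L}_{r',\phi_2,\delta}$ by (\ref{dualqp3bis}), so that a representative of $f$ has at most polynomial growth, while $T^{\ast}(a_{j,n})$ — being $K^{\ast}$ convolved with a compactly supported function whose moments vanish up to order $\delta$ — obeys $|T^{\ast}(a_{j,n})(x)|\lsim\ell_{Q_{j,n}}^{\,\delta-d(1/q-1)}|x-x_{Q_{j,n}}|^{-(d+\delta)}$ for $x\notin 2Q_{j,n}$ (from (\ref{applicattheo15}), the exponent of $\ell_{Q_{j,n}}$ being positive because $\frac{d}{d+\delta}<q$), where $x_{Q_{j,n}}$ is the center of $Q_{j,n}$; combined with $|\lambda_{j,n}a_{j,n}|\lsim 2^j$ and the bounded overlap of the $Q_{j,n}^{\ast}$, the double sum is then bounded, using the quasi-triangle inequalities valid for $q,p\le 1$, by the atomic characterization of $\|T^{\ast}(g)\|_{\mathcal{H}^{(q,p)}}$, hence is finite. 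Granting (\ref{051applicadualqpconvgener}), the estimate $\|T(f)\|_{\mathcal{L}_{r',\phi_1,\delta}^{(q,p,\eta)}}\lsim\|f\|_{\mathcal{L}_{r',\phi_1,\delta}^{(q,p,\eta)}}$ closes exactly as at the end of the proof of Theorem \ref{applicadualqpconv4general}. I expect this $L^1$-summability — trading the $L^{p'}$--$L^p$ duality of the $p>1$ case for a quantitative growth-versus-decay balance summed over Calder\'on--Zygmund stopping cubes with only a quasi-triangle inequality available — to be the main obstacle; the rest is bookkeeping already carried out in Theorems \ref{applicadualqpconv4general}, \ref{theoremsing5} and \ref{theoremdualunifie}.
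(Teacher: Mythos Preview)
Your treatment of the case $p>1$ is exactly the paper's: a direct appeal to Theorem~\ref{applicadualqpconv4general} after the obvious re-labelling $r\leftrightarrow r'$.

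For $p\le 1$, however, the paper takes a much shorter route than the duality argument you outline. Once (\ref{dualqp3bisbis}) identifies $\mathcal{L}_{r',\phi_1,\delta}^{(q,p,\eta)}$ with the classical Campanato space $\mathcal{L}_{r',\phi_1,\delta}$, the boundedness of $T$ on that space is a known result of Peetre~\cite{PeeJ} on convolution operators leaving $L^{p,\lambda}$-type spaces invariant. The whole case is dispatched in one sentence by citation; there is no need to rerun the $\mathcal{H}^{(q,p)}$--duality machinery.

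Your alternative for $p\le 1$ is not only longer but contains a genuine confusion at the key step. In the Fubini argument of Theorem~\ref{applicadualqpconv4general}, the atoms $a_{j,n}$ arise from the atomic decomposition of $T^{\ast}(g)\in\mathcal{H}^{(q,p)}$; they are compactly supported on the cubes $Q_{j,n}$, and the quantity to control is $\sum_{j,n}\|\lambda_{j,n}a_{j,n}f\|_1$. You instead invoke the off-support decay of $T^{\ast}(a_{j,n})$, i.e.\ of $K^{\ast}$ convolved with an atom, but no such objects occur in the sum --- the operator $T^{\ast}$ has already been absorbed when one writes $T^{\ast}(g)=\sum_{j,n}\lambda_{j,n}a_{j,n}$. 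So the growth/decay balance you propose does not match what actually has to be estimated, and the argument as written does not close. One could perhaps repair this by controlling $\int_{Q_{j,n}}|f|$ via the Campanato norm and the size of $Q_{j,n}$, but that is a different estimate from the one you sketch, and in any event the detour is unnecessary given Peetre's result.
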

\begin{proof}
We distinguish the cases $p\leq1$ and $p>1$. The case $p>1$ is merely Theorem \ref{applicadualqpconv4general}. When $p\leq1$, we have $\mathcal{L}_{r',\phi_{1},\delta}^{(q,p,\eta)}=\mathcal{L}_{r',\phi_{1},\delta}$ with equivalent norms, by (\ref{dualqp3bisbis}), and $T$ is bounded from $\mathcal{L}_{r',\phi_{1},\delta}$ to itself, according to \cite{PeeJ}.
\end{proof}

\begin{remark}
Remark \ref{0applicadualqpconv4generalbis0} is valid for Corollary \ref{applicadualqpconv4generalbis} provided that $\left\lfloor d\left(\frac{1}{q}-1\right)\right\rfloor=0$. Also, when $p\leq1$, the assumption $q\leq p$ is not needed and we can take $0<\eta\leq1$ for $1<r\leq\infty$.
\end{remark}

\begin{cor}\label{applicadualqpconv5general}
The Riesz transforms $R_j$, $1\leq j\leq d$, are bounded from $\mathcal{L}_{r,\phi_{1},\delta}^{(q,p,\eta)}$ into $\mathcal{L}_{r,\phi_{1},\delta}^{(q,p,\eta)}$, for $0<q\leq1$, $q\leq p<\infty$, $\delta\geq\left\lfloor d\left(\frac{1}{q}-1\right)\right\rfloor$ and, $1\leq r<p'$ if $1<p$ or $1\leq r<\infty$ if $p\leq1$, with $0<\eta<q$ if $1<r$ or $0<\eta\leq1$ if $r=1$.
\end{cor}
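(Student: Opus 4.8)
The plan is to deduce this from Corollary \ref{applicadualqpconv4generalbis} by verifying that the Riesz transforms have kernels of the required type, and then to remove the auxiliary restriction on $\delta$ by exploiting the $\delta$-independence of the dual space of $\mathcal{H}^{(q,p)}$. Recall that, for $1\leq j\leq d$, the Riesz transform is the convolution operator $R_j(f)=K_j\ast f$, $f\in\mathcal{S}$, where $K_j\in\mathcal{S}'$ is the tempered distribution coinciding on $\mathbb{R}^d\backslash\left\{0\right\}$ with the function $k_j(x)=c_d\,x_j|x|^{-(d+1)}$ (with $c_d$ a dimensional constant) and having the form (\ref{applicattheo091}). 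Since $\widehat{K_j}(\xi)=-i\,\xi_j|\xi|^{-1}$, assumption (\ref{applicattheo12}) holds with $A=1$. Since $k_j$ is smooth on $\mathbb{R}^d\backslash\left\{0\right\}$ and homogeneous of degree $-d$, each derivative $\partial^{\beta}k_j$ is homogeneous of degree $-d-|\beta|$, so $|\partial^{\beta}k_j(x)|\leq B_{\beta}|x|^{-d-|\beta|}$ for $x\neq0$; hence, for \emph{any} prescribed positive integer $m$, $K_j$ satisfies (\ref{applicattheo15}) with $\delta$ replaced by $m$ and $B:=\max_{|\beta|\leq m}B_{\beta}$. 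This verification is routine; the point is only that it holds for every $m$.

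Given $q,p,\delta,r,\eta$ as in the statement, I would then fix an integer $\delta^{\ast}\geq\max\left\{\delta,\left\lfloor d\left(\tfrac{1}{q}-1\right)\right\rfloor+1\right\}$, so that $\tfrac{d}{d+\delta^{\ast}}<q$; by the previous step $K_j$ satisfies (\ref{applicattheo12}) and (\ref{applicattheo15}) with $\delta^{\ast}$ in place of $\delta$. The exponent $r$ satisfies $\max\left\{1,p\right\}<r'\leq\infty$ (because $r<p'$ when $p>1$, and $r<\infty$ when $p\leq1$), and the constraints imposed on $\eta$ in the statement are exactly the ones required by Corollary \ref{applicadualqpconv4generalbis} for its exponent, here taken to be $r'$. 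Applying Corollary \ref{applicadualqpconv4generalbis} with the pair $(\delta^{\ast},r')$, I obtain that $R_j$ is extendable on $\mathcal{L}_{r,\phi_{1},\delta^{\ast}}^{(q,p,\eta)}$ and bounded from it into itself.

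It remains to pass from $\delta^{\ast}$ back to $\delta$. Since both $\delta$ and $\delta^{\ast}$ are $\geq\left\lfloor d\left(\tfrac{1}{q}-1\right)\right\rfloor$, Theorem \ref{theoremdualunifie}, used with the exponent $r'$ (for which $\max\{1,p\}<r'\leq\infty$ and the admissible range of $\eta$ is that of the statement), yields
\[
\mathcal{L}_{r,\phi_{1},\delta}^{(q,p,\eta)}\cong\left(\mathcal{H}^{(q,p)}\right)^{\ast}\cong\mathcal{L}_{r,\phi_{1},\delta^{\ast}}^{(q,p,\eta)}
\]
with equivalent norms, i.e. $\mathcal{L}_{r,\phi_{1},\delta}^{(q,p,\eta)}=\mathcal{L}_{r,\phi_{1},\delta^{\ast}}^{(q,p,\eta)}$ as sets with equivalent quasi-norms; hence the boundedness of $R_j$ on the latter space is precisely the asserted boundedness on $\mathcal{L}_{r,\phi_{1},\delta}^{(q,p,\eta)}$. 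I expect the only delicate point to be this mismatch between the hypothesis $\delta\geq\left\lfloor d\left(\tfrac{1}{q}-1\right)\right\rfloor$ and the strict inequality $\tfrac{d}{d+\delta}<q$ needed to invoke Theorem \ref{applicadualqpconv4general}/Corollary \ref{applicadualqpconv4generalbis}: it is bridged exactly because the Riesz kernels are smooth of all orders away from the origin (so (\ref{applicattheo15}) holds with an arbitrarily large auxiliary $\delta^{\ast}$) together with the $\delta$-independence of the Campanato-type dual space recorded in Theorem \ref{theoremdualunifie}.
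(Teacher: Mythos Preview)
Your argument is correct and is essentially the intended one: the paper states this corollary without proof, as an immediate application of Corollary \ref{applicadualqpconv4generalbis} to the Riesz kernels. You have moreover carefully handled the one genuine subtlety the paper leaves implicit, namely that Corollary \ref{applicadualqpconv4generalbis} requires $\tfrac{d}{d+\delta}<q$ (i.e.\ $\delta>d(\tfrac{1}{q}-1)$), whereas the present corollary only assumes $\delta\geq\lfloor d(\tfrac{1}{q}-1)\rfloor$; your device of passing to an auxiliary $\delta^{\ast}$ via the smoothness of the Riesz kernels and then returning to $\delta$ through the $\delta$-independence in Theorem \ref{theoremdualunifie} is exactly the right bridge.
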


\begin{remark}
In Corollary \ref{applicadualqpconv5general}, when $p\leq1$, the assumption $q\leq p$ is not needed and we can take $0<\eta\leq1$ for $1<r\leq\infty$. 
\end{remark}

\subsection{Calder\'on-Zygmund operator} 
 
Let $\triangle:=\left\{(x,x):\ x\in\mathbb{R}^d\right\}$ be the diagonal of $\mathbb{R}^d\times\mathbb{R}^d$.
We say that a function $K:\mathbb{R}^d\times\mathbb{R}^d\backslash\triangle\rightarrow\mathbb{C}$ is a standard kernel if there exist a constant $A>0$ and an exponent $\mu>0$ such that:
\begin{eqnarray}
|K(x,y)|\leq A|x-y|^{-d}; \label{integralsing1}
\end{eqnarray}
\begin{equation}
|K(x,y)-K(x,z)|\leq A\frac{|y-z|^{\mu}}{|x-y|^{d+\mu}}\ ,\ \text{ if }\ \ |x-y|\geq 2|y-z| \label{integralsing2}
\end{equation}
and
\begin{equation}
|K(x,y)-K(w,y)|\leq A\frac{|x-w|^{\mu}}{|x-y|^{d+\mu}}\ ,\ \text{ if }\ \ |x-y|\geq 2|x-w|. \label{integralsing3}
\end{equation}

We denote by $\mathcal{SK}(\mu,A)$ the class of all standard kernels $K$ with exponent and constant $\mu$ and $A$. 

\begin{defn}\cite[Definition 5.11]{JD} An operator $T$ is a (generalized) Calder\'on-Zygmund operator if 
\begin{enumerate}
\item $T$ is bounded on $L^2$;
\item There exists a standard kernel $K$ such that for $f\in L^2$ with compact support,
\begin{eqnarray}
T(f)(x)=\int_{\mathbb{R}^d}K(x,y)f(y)dy,\ x\notin\text{supp}(f). \label{egopcalzyg1}
\end{eqnarray}
\end{enumerate}
\end{defn}

We proved the following result in \cite{AbFt1} (see \cite[Theorem 4.2]{AbFt1}).

\begin{thm} \label{theoremsing1}
Let $T$ be a Calder\'on-Zygmund operator with kernel $K\in \mathcal{SK}(\mu,A)$. If $\frac{d}{d+\mu}<q\leq 1$, then $T$ extends to a bounded operator from $\mathcal{H}^{(q,p)}$ to $(L^q,\ell^p)$. 
\end{thm}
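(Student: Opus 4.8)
The plan is to run the atomic method for $\mathcal{H}^{(q,p)}$ (available in the ambient range $0<q\le1$, $q\le p<\infty$), exploiting that a Calder\'on--Zygmund operator is bounded on $L^{q_0}$ for every $q_0\in(1,\infty)$ together with the $\mu$-H\"older smoothness of $K$. Since $\mathcal{H}_{fin}^{(q,p)}$ is dense in $\mathcal{H}^{(q,p)}$, it suffices to prove $\|Tf\|_{q,p}\lsim\|f\|_{\mathcal{H}^{(q,p)}}$ for a finite linear combination $f$ of $(q,r,\delta)$-atoms with $\delta=\lfloor d(\tfrac1q-1)\rfloor$ and $r\in(\max\{2,p\},\infty)$ fixed, and then to extend $T$ by density ($Tf$ is already defined for such $f\in L^2$, and continuity passes to the closure). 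For such an $f$ I would use the \emph{structured} atomic decomposition produced by the grand maximal Calder\'on--Zygmund decomposition, as in the proof of \cite[Theorem 4.4]{AbFt}: $f=\sum_{j\in\mathbb{Z}}\sum_{n\ge0}\lambda_{j,n}\mathbf{a}_{j,n}$ with $\mathrm{supp}(\mathbf{a}_{j,n})\subset Q_{j,n}=C_0Q_{j,n}^{\ast}$, $|\lambda_{j,n}\mathbf{a}_{j,n}|\lsim2^{j}$ a.e., $\sum_n\chi_{Q_{j,n}^{\ast}}\le K(d)$ for each $j$, $\bigcup_n Q_{j,n}^{\ast}=\mathcal{O}^{j}:=\{\mathcal{M}_{\mathcal{F}_N^0}(f)>2^{j}\}$, and $\big\|\sum_j2^{j}\chi_{\mathcal{O}^j}\big\|_{q,p}\lsim\|f\|_{\mathcal{H}^{(q,p)}}$. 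Fixing a dilate $\widetilde{Q}_{j,n}$ of $Q_{j,n}$, I would split $Tf=F_{\mathrm{loc}}+F_{\mathrm{far}}$ with $F_{\mathrm{loc}}=\sum_{j,n}\lambda_{j,n}(T\mathbf{a}_{j,n})\chi_{\widetilde{Q}_{j,n}}$ and estimate the two pieces in $(L^q,\ell^p)$ separately.

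For $F_{\mathrm{far}}$ the crucial point — and the place where the hypothesis enters — is a pointwise bound. Since $q>\tfrac{d}{d+\mu}$ one has $d(\tfrac1q-1)<\mu$, so using only the cancellation $\int\mathbf{a}_{j,n}=0$ together with \eqref{integralsing2}, for $x\notin\widetilde{Q}_{j,n}$,
\[
|T\mathbf{a}_{j,n}(x)|\le\int_{Q_{j,n}}\big|K(x,y)-K(x,c_{j,n})\big|\,|\mathbf{a}_{j,n}(y)|\,dy\lsim|Q_{j,n}|^{-1/q}\Big(\frac{\ell_{j,n}}{|x-c_{j,n}|}\Big)^{d+\mu}\lsim|Q_{j,n}|^{-1/q}\big[\mathfrak{M}(\chi_{Q_{j,n}})(x)\big]^{b},
\]
where $c_{j,n},\ell_{j,n}$ are the center and side length of $Q_{j,n}$ and $b:=\tfrac{d+\mu}{d}>1$. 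Hence $|F_{\mathrm{far}}(x)|\lsim\sum_j2^{j}\sum_n[\mathfrak{M}(\chi_{Q_{j,n}})(x)]^{b}$. The condition $q>\tfrac{d}{d+\mu}$ is exactly $\tfrac1q<b$, so one may choose $b'$ with $\tfrac1q<b'\le b$; then $[\mathfrak{M}\chi_{Q_{j,n}}]^{b}\le[\mathfrak{M}\chi_{Q_{j,n}}]^{b'}$ and, because $\tfrac1{b'}<q\le p$, the power-$\tfrac1{b'}$ Hardy--Littlewood maximal operator and its vector-valued analogue are bounded on $(L^q,\ell^p)$. Using the bounded overlap $\sum_n\chi_{Q_{j,n}^{\ast}}\le K(d)$ and the nesting of the $\mathcal{O}^j$, one collapses the double sum by a Fefferman--Stein type vector-valued maximal inequality in $(L^q,\ell^p)$ — of the kind invoked in the proof of Theorem~\ref{applicadualqpconv4general}, see \cite{CFES} and \cite[Lemma 3.3]{AbFt3} — which gives $\|F_{\mathrm{far}}\|_{q,p}\lsim\big\|\sum_j2^{j}\chi_{\mathcal{O}^j}\big\|_{q,p}\lsim\|f\|_{\mathcal{H}^{(q,p)}}$.

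For $F_{\mathrm{loc}}$ I would use $L^{q_0}$-boundedness of $T$ with a fixed $q_0\in(\max\{1,p\},\infty)$: from $|\lambda_{j,n}\mathbf{a}_{j,n}|\lsim2^{j}\chi_{Q_{j,n}}$ we get $|\lambda_{j,n}|\,\|(T\mathbf{a}_{j,n})\chi_{\widetilde{Q}_{j,n}}\|_{q_0}\le|\lambda_{j,n}|\,\|T\mathbf{a}_{j,n}\|_{q_0}\lsim|\lambda_{j,n}|\,\|\mathbf{a}_{j,n}\|_{q_0}\lsim2^{j}|Q_{j,n}|^{1/q_0}$, so each $\lambda_{j,n}(T\mathbf{a}_{j,n})\chi_{\widetilde{Q}_{j,n}}$ is, up to a uniform constant, $2^{j}$ times a function supported on $\widetilde{Q}_{j,n}$ with $L^{q_0}$-average $\lsim1$. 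Majorizing such a function pointwise by the $L^{q_0}$ maximal function of $\chi_{\widetilde{Q}_{j,n}}$ and running the same vector-valued maximal inequality on $(L^q,\ell^p)$ together with the bounded overlap of the $\widetilde{Q}_{j,n}$ gives, exactly as for $F_{\mathrm{far}}$, $\|F_{\mathrm{loc}}\|_{q,p}\lsim\big\|\sum_j2^{j}\chi_{\mathcal{O}^j}\big\|_{q,p}\lsim\|f\|_{\mathcal{H}^{(q,p)}}$. Adding the two bounds proves the estimate on $\mathcal{H}_{fin}^{(q,p)}$, and density finishes the proof.

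I expect the main obstacle to be precisely this assembly step: passing from the per-block ($L^{q_0}$ or pointwise-tail) control of the $\lambda_{j,n}T\mathbf{a}_{j,n}$ to control of the whole double series $\sum_{j,n}$ in the amalgam quasi-norm. Because $\|\cdot\|_{q,p}^{q}$ is only $q$-subadditive, a term-by-term atomic estimate — or even a level-by-level one — loses too much: the equivalence $\|f\|_{\mathcal{H}^{(q,p)}}\approx\big\|\sum_j2^j\chi_{\mathcal{O}^j}\big\|_{q,p}$ is genuinely smaller than $\big(\sum_j2^{jq}\|\chi_{\mathcal{O}^j}\|_{q,p}^{q}\big)^{1/q}$. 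One must therefore keep the full Whitney/bounded-overlap organization of the decomposition and carry out a single Fefferman--Stein vector-valued maximal inequality adapted to $(L^q,\ell^p)$; the role of the standing hypothesis $q>\tfrac{d}{d+\mu}$ is exactly to guarantee that the exponent $b=\tfrac{d+\mu}{d}$ coming from the $\mu$-H\"older regularity of $K$ satisfies $b>\tfrac1q$, which is what makes that inequality (and the convergence of the tail sums) work.
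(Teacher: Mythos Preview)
The paper does not prove this theorem: it merely cites it as \cite[Theorem 4.2]{AbFt1}. So there is no ``paper's own proof'' to compare against. Your overall strategy --- atomic decomposition with the structured Calder\'on--Zygmund/Whitney data from \cite[Theorem 4.4]{AbFt}, a near/far splitting of $T\mathbf{a}_{j,n}$, H\"older decay of the far piece via \eqref{integralsing2} and the zeroth moment, and a Fefferman--Stein vector-valued maximal inequality on $(L^q,\ell^p)$ --- is exactly the standard route and is, in outline, what the proof in \cite{AbFt1} does.

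There is, however, a real gap in your treatment of the local piece $F_{\mathrm{loc}}$. You assert that a function supported on $\widetilde{Q}_{j,n}$ with $L^{q_0}$-norm $\lsim 2^{j}|\widetilde{Q}_{j,n}|^{1/q_0}$ can be ``majorized pointwise by the $L^{q_0}$ maximal function of $\chi_{\widetilde{Q}_{j,n}}$''. That is false: a function with controlled $L^{q_0}$-average on a cube can be arbitrarily large on small subsets, so no pointwise bound by $2^{j}[\mathfrak{M}(\chi_{\widetilde{Q}_{j,n}})]^{\alpha}$ is available for any $\alpha$. Consequently the vector-valued maximal inequality cannot be applied to $F_{\mathrm{loc}}$ the same way as to $F_{\mathrm{far}}$. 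What actually works (and is what is done in \cite{AbFt1}) is a separate summation lemma: if $b_{j,n}$ is supported on $\widetilde{Q}_{j,n}$ with $\|b_{j,n}\|_{q_0}\lsim 2^{j}|\widetilde{Q}_{j,n}|^{1/q_0}$ for some $q_0>\max\{1,p\}$, and the $\widetilde{Q}_{j,n}$ have the Whitney bounded-overlap structure inside $\mathcal{O}^j$, then $\big\|\sum_{j,n}b_{j,n}\big\|_{q,p}\lsim\big\|\sum_{j}2^{j}\chi_{\mathcal{O}^j}\big\|_{q,p}$. The proof of that lemma does \emph{not} go through a pointwise bound; it uses H\"older's inequality at the level of each cube to pass from $L^{q_0}$ to $L^q$ locally, then the bounded overlap and the Fefferman--Stein inequality to reassemble. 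Once you replace your incorrect pointwise step by this lemma, the rest of your argument goes through.
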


Our main result is the following.

\begin{thm}\label{applicadualqp0}
Suppose that $1\leq p<\infty$. Let $\delta\geq0$ be an integer, $1\leq r<p'$ and, $0<\eta<1$ if $1<r<p'$ or $0<\eta\leq1$ if $r=1$. Let $T$ be a Calder\'on-Zygmund operator with kernel $K\in \mathcal{SK}(\mu,A)$. Then there exists a constant $C>0$ such that
\begin{eqnarray*}
\left\|T(f)\right\|_{\mathcal{L}_{r,\phi_{1},\delta}^{(1,p,\eta)}}\leq C\left\|f\right\|_{\infty,p'},
\end{eqnarray*}
for all $f\in(L^{\infty},\ell^{p'})$.
\end{thm}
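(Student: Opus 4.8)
The plan is to follow the scheme of the proof of Theorem~\ref{applicadualqpconv2}, with the convolution kernel replaced by a general standard kernel and Theorem~\ref{theoremsing1} used in place of Theorem~\ref{theoremsing4}. First I would dispose of the case $p=1$: here $(L^{\infty},\ell^{1'})=(L^{\infty},\ell^{\infty})=L^{\infty}$ and, by (\ref{dualqp3bisbis}) together with the classical identifications, $\mathcal{L}_{r,\phi_{1},\delta}^{(1,1,\eta)}\cong\mathcal{L}_{1,\phi_{1},0}=\mathrm{BMO}(\mathbb{R}^d)$, so the assertion is exactly the classical $L^{\infty}\to\mathrm{BMO}$ boundedness of Calder\'on--Zygmund operators. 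So assume $1<p<\infty$. Since $(L^{\infty},\ell^{p'})\subset L^{p'}$ with $1<p'<\infty$ and $T$ is bounded on $L^{p'}$ by the standard Calder\'on--Zygmund theory, $T$ is well defined on $(L^{\infty},\ell^{p'})$; and by Theorem~\ref{theoremsing1} (with $q=1$, admissible since $\tfrac{d}{d+\mu}<1$) together with $(L^q,\ell^p)=(L^1,\ell^p)$, $T$ extends to a bounded operator from $\mathcal{H}^{(1,p)}$ to $(L^1,\ell^p)$.

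Next I would bring in the transpose operator $T^{\ast}$. Its kernel is $K^{\ast}(x,y)=K(y,x)$, which again belongs to $\mathcal{SK}(\mu,A)$ since (\ref{integralsing1}) is symmetric in $x,y$, while (\ref{integralsing2}) and (\ref{integralsing3}) exchange roles under $K\mapsto K^{\ast}$; and $T^{\ast}$ is bounded on $L^2$ because $T$ is. Hence $T^{\ast}$ is again a Calder\'on--Zygmund operator, and by Theorem~\ref{theoremsing1} it too extends to a bounded operator from $\mathcal{H}^{(1,p)}$ to $(L^1,\ell^p)$. Because all the spaces $\mathcal{L}_{r,\phi_{1},\delta}^{(1,p,\eta)}$ with $1\le r<p'$ and the corresponding admissible $\eta$ are isomorphic with equivalent norms (the $q=1$ case of Remark~\ref{remarqedualeqp1}), I may assume without loss of generality that $r$ is chosen so that $r'>\max\{2,p\}$, so that every $(1,r',\delta)$-atom is simultaneously a $(1,2,\delta)$-atom and a $(1,p,\delta)$-atom.

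The core of the argument is then a duality computation identical in structure to (\ref{020applicadualqpconv})--(\ref{023applicadualqpconv}). Fix $f\in(L^{\infty},\ell^{p'})$ and a family $\{\Omega^j\}_{j\in\mathbb{Z}}$ of open sets with $\left\|\sum_{j\in\mathbb{Z}}2^{j\eta}\chi_{\Omega^j}\right\|_{\frac{1}{\eta},\frac{p}{\eta}}<\infty$. For $g$ in the subspace $\mathcal{H}_{fin}^{(1,p)}$ of finite linear combinations of $(1,r',\delta)$-atoms one has $g\in L^2\cap L^p$, $T^{\ast}(g)\in(L^1,\ell^p)$ with $\|T^{\ast}(g)\|_{1,p}\lsim\|g\|_{\mathcal{H}^{(1,p)}}$, and the transpose identity $\int_{\mathbb{R}^d}T(f)(x)g(x)\,dx=\int_{\mathbb{R}^d}f(x)T^{\ast}(g)(x)\,dx$ holds since $f\in L^{p'}$, $g\in L^p$, $T(f)\in L^{p'}$ and $T^{\ast}(g)\in L^p$. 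Hence $\left|\int_{\mathbb{R}^d}T(f)g\right|\le\|f\|_{\infty,p'}\|T^{\ast}(g)\|_{1,p}\lsim\|f\|_{\infty,p'}\|g\|_{\mathcal{H}^{(1,p)}}$, so $G_{T(f)}\colon g\mapsto\int_{\mathbb{R}^d}T(f)g$ extends to a unique $\widetilde{G_{T(f)}}\in(\mathcal{H}^{(1,p)})^{\ast}$ with $\|\widetilde{G_{T(f)}}\|\lsim\|f\|_{\infty,p'}$. Finally, since $r<p'$ gives $T(f)\in L^{p'}\subset L_{\mathrm{loc}}^r$ and $\widetilde{G_{T(f)}}$ is represented on $\mathcal{H}_{fin}^{(1,p)}$ by integration against $T(f)$, repeating the second part of the proof of Theorem~\ref{theoremdualqp} with $\widetilde{G_{T(f)}},T(f),g$ in the roles of $T,g,f$ yields $\sum_{j\in\mathbb{Z}}2^j\textit{O}(T(f),\Omega^j,r)\lsim\|\widetilde{G_{T(f)}}\|\left\|\sum_{j\in\mathbb{Z}}2^{j\eta}\chi_{\Omega^j}\right\|_{\frac{1}{\eta},\frac{p}{\eta}}^{\frac{1}{\eta}}$, and combining the last two estimates gives $T(f)\in\mathcal{L}_{r,\phi_{1},\delta}^{(1,p,\eta)}$ with $\|T(f)\|_{\mathcal{L}_{r,\phi_{1},\delta}^{(1,p,\eta)}}\lsim\|f\|_{\infty,p'}$.

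I expect the main obstacle to be the bookkeeping around the transpose: verifying carefully that $T^{\ast}$ is a bona fide Calder\'on--Zygmund operator with kernel in $\mathcal{SK}(\mu,A)$, and that the pairing identity $\int T(f)g=\int fT^{\ast}(g)$ is legitimate for the pairs $(f,g)$ at hand (this is precisely where the choice $r'>\max\{2,p\}$ is used, to place $g$ in $L^2\cap L^p$). A secondary point, needed only if one wishes to treat $p=1$ and $p>1$ uniformly, is that for $p=1$ the object $T(f)$ need not a priori lie in $L^r_{\mathrm{loc}}$ in a manner compatible with (\ref{dualqp}); this is handled exactly as in the Remark following Theorem~\ref{applicadualqpconv2}, by invoking Theorem~\ref{theoremdualunifie}(2) to produce a representative $h\in\mathcal{L}_{r,\phi_{1},\delta}^{(1,p,\eta)}$ of $\widetilde{G_{T(f)}}$ and identifying $T(f)$ with $h$ modulo polynomials.
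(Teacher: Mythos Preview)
Your proposal is correct and follows exactly the approach the paper intends: the paper's own proof consists of a single sentence saying that, using Theorem~\ref{theoremsing1} in place of Theorem~\ref{theoremsing4}, the argument is identical to that of Theorem~\ref{applicadualqpconv2}, and you have faithfully written out those details. Your verification that $T^{\ast}$ with kernel $K^{\ast}(x,y)=K(y,x)$ is again a Calder\'on--Zygmund operator in $\mathcal{SK}(\mu,A)$, and your handling of the $p=1$ case and of the choice $r'>\max\{2,p\}$, are precisely what is needed and match the convolution proof line by line.
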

\begin{proof}
Using Theorem \ref{theoremsing1}, the proof of Theorem \ref{applicadualqp0} is similar to the one of Theorem \ref{applicadualqpconv2}; the details are hence left to the reader.
\end{proof}

\end{document}